\begin{document}
\newtheorem{theoreme}{Theorem}
\newtheorem{lemma}{Lemma}[section]
\newtheorem{proposition}[lemma]{Proposition}
\newtheorem{corollary}[lemma]{Corollary}
\newtheorem{definition}[lemma]{Definition}
\newtheorem{conjecture}[lemma]{Conjecture}
\newtheorem{remark}[lemma]{Remark}
\newtheorem{exe}{Exercise}
\newtheorem{theorem}[lemma]{Theorem}
\theoremstyle{definition}
\numberwithin{equation}{section}
\newcommand{\R}{\mathbb R}
\newcommand{\TT}{\mathbb T}
\newcommand{\Z}{\mathbb Z}
\newcommand{\N}{\mathbb N}
\newcommand{\Q}{\mathbb Q}
\newcommand{\Var}{\operatorname{Var}}
\newcommand{\tr}{\operatorname{tr}}
\newcommand{\supp}{\operatorname{Supp}}
\newcommand{\intinf}{\int_{-\infty}^\infty}
\newcommand{\me}{\mathrm{e}}
\newcommand{\mi}{\mathrm{i}}
\newcommand{\dif}{\mathrm{d}}
\newcommand{\beq}{\begin{equation}}
\newcommand{\eeq}{\end{equation}}
\newcommand{\beqq}{\begin{equation*}}
\newcommand{\eeqq}{\end{equation*}}
\newcommand{\ben}{\begin{eqnarray}}
\newcommand{\een}{\end{eqnarray}}
\newcommand{\beno}{\begin{eqnarray*}}
\newcommand{\eeno}{\end{eqnarray*}}

\def\d{\delta}
\def\a{\alpha}
\def\e{\varepsilon}
\def\ld{\lambda}
\def\p{\partial}
\def\v{\varphi}
\newcommand{\D}{\Delta}
\newcommand{\Ld}{\Lambda}
\newcommand{\n}{\nabla}
\newcommand{\GG}{\text{g}}
\newcommand{\f}{\frac}
\newcommand{\dec}{L^{p,R}_{\rm Dec}}

\title[Local smoothing of FIO]
{Square function inequality for  a class of Fourier integral operators satisfying cinematic curvature conditions }

\author[Gao]{Chuanwei Gao}
\address{\hskip-1.15em Chuanwei Gao:
	\hfill\newline  Beijing International Center for Mathematical Research,
	\hfill\newline Peking University, Beijing 100871, China}
\email{cwgao@pku.edu.cn}

\author[Miao]{Changxing Miao}
\address{\hskip-1.15em Changxing Miao:
	\hfill\newline Institute of Applied Physics and Computational
	Mathematics,
	\hfill\newline P. O. Box 8009,\ Beijing,\ China,\ 100088,}
\email{miao\_changxing@iapcm.ac.cn}

\author[Yang]{Jianwei-Urbain Yang}
\address{\hskip-1.15em Jianwei-Urbain Yang£º
	\hfill\newline Department of Mathematics,
	\hfill\newline Beijing Institute of Technology,
	\hfill\newline Beijing 100081,\ P. R.  China}
\email{jw-urbain.yang@bit.edu.cn}

\subjclass[2010]{Primary:35S30; Secondary: 35L05}

\keywords{Fourier integral operator; cinematic curvature condition; Local smoothing;  Decoupling inequality}
\begin{abstract}
In this paper, we establish an improved variable coefficient version of square function inequality, by which  the local smoothing  estimate $L^p_\alpha\rightarrow L^p$ for the Fourier integral operators satisfying cinematic curvature condition is further improved.  In particular, we establish almost sharp results for $2<p\leq 3$ and push forward the estimate for the critical point $p=4$. As a consequence, the local smoothing estimate for the wave equation on the manifold is refined. We generalize the results in \cite{LeVa12, Le18P} to its variable coefficient counterpart.
	The main ingredients in the argument includes  multilinear oscillatory integral  estimate \cite{BCT06} and decoupling inequality \cite{BelHicSog18P}.
\end{abstract}
\maketitle

\section{Introduction}
\label{sect:introd}

In this paper, we  are concerned with a class of Fourier integral operator $\mathscr{F}\in I^{\sigma-\frac{1}{4}}(Z,Y;\mathscr{C})$ of which the canonical relation $\mathscr{C}$ satisfies a certain curvature condition which is usually referred to as cinematic condition.  The typical example, among other things, is the half-wave operator $e^{it\sqrt{-\Delta}}$.   This class of operators are naturally related to the solution of wave equation on general Riemannian manifolds.  Local smoothing conjecture due to Sogge \cite{Sogge91} is about the sharp regularity $L_\alpha^p\rightarrow L^p$ estimate for the half-wave operator $e^{it\sqrt{-\Delta}}$ and  was found many applications in harmonic analysis. It roughly says that one may gain additional regularity by averaging time variable over the fixed time case. It is natural to consider this phenomenon at the general level of Fourier integral operator.  We refer to
\cite{BelHicSog18P,BelHicSog18P-surv,GaMiaYa18P} for more information about the motivation and backgrounds.

 Let $Z$ and $Y$ be  paracompact manifolds with
${\rm dim} \;Z=3$
and ${\rm dim}\; Y=2$.
We consider Fourier integral operators  $\mathscr{F}\in I^{\sigma-\frac{1}{4}}(Z,Y;\mathscr{C})$, where the canonical relation $\mathscr{C}$, from $T^*Y\setminus 0$ to $T^*Z\setminus 0$, which is a homogeneous, conic
Lagrangian submanifold of $T^*Z\setminus 0\times T^*Y\setminus 0$ with ${\rm dim}\; \mathscr{C}=5$.

As in \cite{MSS-jams},
we impose the cone conditions  on $\mathscr{C}$.
Given $z_0\in Z$, let $\varPi_{T^{*}Y}$, $\varPi_{T^{*}_{z_0}Z}$
and $\varPi_{Z}$ be projections from $\mathscr{C}$ to $T^{*}Y\setminus 0$, $T^*_{z_0}Z\setminus 0$ and $Z$ respectively,
\begin{equation}
\label{jsdkmkcsd}
\xymatrix{
	&\mathscr{C}\ar[d]_{\varPi_{Z}} \ar[dl]_{\varPi_{T^{*}Y}}\ar[dr]^{\varPi_{T^{*}_{z_0}Z}}	
	&\\ T^{*}Y\setminus 0
	&Z
	& T^{*}_{z_0}Z\setminus0}
\end{equation}
and assume
\begin{equation}
\label{dkmkldsc}
{\rm rank}\; d\varPi_{T^{*}Y}\equiv 4,
\end{equation}
\begin{equation}
\label{vmklsdmks}
{\rm rank}\; d \varPi_{Z}\equiv 3.
\end{equation}
Let $\Gamma_{z_0}=\varPi_{T^{*}_{z_0}Z}(\mathscr C)$.
As a consequence of \eqref{dkmkldsc}\eqref{vmklsdmks} and the  homogeneity,
$\Gamma_{z_0}$ is a conic subset of $T^*_{z_0}Z\setminus0$.
The \emph{cone condition} imposed on $\mathscr C$ is that for every $\zeta\in \Gamma_{z_0}$,
there is one principal curvature nonvanishing.
We say a Fourier integral operator $\mathscr F$ satisfies the cinematic curvature condition,
if its canonical relation $\mathscr C$ satisfies \eqref{dkmkldsc}\eqref{vmklsdmks} and the cone condition.

This paper is a continuation of our previous work \cite{GaMiaYa18P}. By adapting the strategy in \cite{LeVa12, Le18P}, we give further improvement upon our previous result in dimension two.
\begin{theoreme} \label{theo1}
	\label{main}
	Suppose  $\mathscr F\in I^{\sigma-\frac{1}{4}}(Z,Y;\mathscr C)$ with $Z, Y$  as above,
	where the canonical relation $\mathscr C$ satisfies the cinematic curvature condition.
	Then, the following estimate
	holds
	\begin{equation}
	\label{eq:main}
	\bigl\|\mathscr{ F }f\bigr\|_{L^p_{\rm loc}(Z)}\leqslant C\|f\|_{L^p_{\rm comp}(Y)}, \; \text{for all} \; \sigma<-\sigma(p),
	\end{equation}
	where $\sigma(p)$ is defined by
	\begin{equation}
	\label{eq:sig}
	\sigma(p)=\left\{ \begin{aligned}
	0\quad\;, &\quad2<p\leqslant 3,\\
	\frac{1}{4}-\frac{3}{4p}, &\quad 3<p\leqslant 4,\\
	\frac{3}{8}-\frac{5}{4p}, & \quad 4<p<6.
	\end{aligned}\right.
	\end{equation}
\end{theoreme}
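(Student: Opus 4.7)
The plan is to follow the standard microlocal reduction strategy, reducing Theorem \ref{main} to a frequency-localized estimate for an oscillatory integral operator, and then to prove the required bounds via an improved variable-coefficient square function inequality coupled with multilinear and decoupling tools. I would begin by applying a partition of unity and a Littlewood--Paley decomposition to reduce to the estimate
\beqq
\|T^\lambda f\|_{L^p(Z)}\lesssim \lambda^{\sigma(p)+\e}\|f\|_{L^p(Y)},
\eeqq
where $T^\lambda f(z)=\int e^{\mi \lambda \phi(z,y)}a(z,y)f(y)\,\dif y$ is an oscillatory integral operator of Carleson--Sj\"olin type whose phase $\phi$ inherits the cinematic curvature condition from $\mathscr C$. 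Standard microlocal normal-form arguments (as in \cite{MSS-jams}) allow one to put $\phi$ in a convenient canonical form, after which the cone condition becomes a nondegeneracy statement about $\partial_{yy}^{2}(\partial_z\phi)$.

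The heart of the proof is an improved variable-coefficient \emph{square function inequality}. Decompose the $y$-frequency support of $f$ into angular sectors $\theta$ of aperture $\lambda^{-1/2}$, producing $f=\sum_\theta f_\theta$. I would aim to establish
\beqq
\|T^\lambda f\|_{L^p(Z)}\lesssim \lambda^{\sigma(p)+\e}\Bigl\|\Bigl(\sum_\theta |T^\lambda f_\theta|^2\Bigr)^{1/2}\Bigr\|_{L^p(Z)}
\eeqq
with the $\sigma(p)$ claimed in \eqref{eq:sig}. Once this inequality is in hand, $L^p$-orthogonality of the frequency-localized pieces $\{f_\theta\}$ (or a direct application of the Beltran--Hickman--Sogge decoupling inequality \cite{BelHicSog18P}) closes the estimate, since for $p\geq 2$ one has $\|(\sum_\theta |T^\lambda f_\theta|^2)^{1/2}\|_{L^p}\lesssim \|f\|_{L^p}$ up to the decoupling loss, which is absorbed into the $\lambda^\e$ factor.

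To prove the square function inequality with the improved exponents I would adopt the Lee--Vargas \cite{LeVa12} and Lee \cite{Le18P} broad/narrow scheme, transplanted to the variable-coefficient setting. One partitions the physical space according to whether the $\lambda^{-1/2}$-plates $T^\lambda f_\theta$ contributing to $|T^\lambda f(z)|$ are transverse (broad) or concentrated in a thin cone (narrow). In the broad regime one applies the multilinear restriction/Kakeya estimate of Bennett--Carbery--Tao \cite{BCT06}; to make this rigorous for variable coefficients I would run an induction on scales and approximate $T^\lambda$ on $\lambda^{-1/2}$-balls by constant-coefficient extension operators associated to suitably translated cones. In the narrow regime, iteration combined with the decoupling inequality \cite{BelHicSog18P} reduces to the square function estimate at a smaller scale, giving an $\e$-removal by pigeonholing. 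Interpolating the resulting estimates produces the three regimes in \eqref{eq:sig}: sharpness up to $\e$ for $2<p\leq 3$, convex interpolation for $3<p\leq 4$, and the pushed-forward estimate past the critical point $p=4$ via the trilinear refinement.

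The main obstacle will be running the induction on scales uniformly in the variable-coefficient framework: one must verify that after restricting $T^\lambda$ to a ball of radius $\lambda^{1/2}$ and rescaling, the resulting operator is again of the same type (with a phase satisfying the cinematic curvature condition uniformly), so that the multilinear/decoupling machinery can be reapplied at each stage. Handling the error terms in the parabolic rescaling and ensuring the constants remain harmless for the final interpolation near $p=4$ is the delicate technical point that distinguishes the present argument from its constant-coefficient analogue \cite{LeVa12, Le18P}.
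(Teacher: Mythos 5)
Your proposal follows essentially the same strategy as the paper: reduce via Littlewood--Paley and the Mockenhaupt--Seeger--Sogge framework to a frequency-localized reverse square function inequality at $p=3,4$, prove it through a Bourgain--Guth broad/narrow decomposition that feeds the Bennett--Carbery--Tao multilinear estimate (after parabolic rescaling and normal-form normalization of the phase) in the broad regime and a decoupling-based bootstrap in the narrow regime, then interpolate with the trivial $L^2$ bound and the Beltran--Hickman--Sogge $L^6$ endpoint to obtain the three ranges in \eqref{eq:sig}. One small imprecision worth noting: the closing step $\|(\sum_\theta |T^\lambda f_\theta|^2)^{1/2}\|_{L^p}\lesssim\|f\|_{L^p}$ is a \emph{forward} square function bound that comes from the Nikodym-type maximal estimates of \cite{MSS-jams}, not from the Beltran--Hickman--Sogge $\ell^2$-decoupling inequality, which goes the other way and would by itself give a weaker local smoothing exponent than the square function route for $p<6$.
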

\begin{remark}
  The results obtained above generalize its constant coefficient counterpart in \cite{LeVa12,Le18P}. It is worth noting that the results in the case $2<p\leq 3$ are sharp, except for possibly  arbitrarily small regularity loss.
  \end{remark}

As byproducts of Theorem \ref{theo1}, we have  a few consequences.

If we write $z=(x,t)$ and let $\mathscr{ F}_tf(x)=\mathscr Ff(x,t)$, then we have the following maximal theorem under the above assumptions in Theorem \ref{main}.
\begin{corollary}
	Let $I\subset\R$ be a compact interval and $Z=X\times I$ such that $X$ and $Y$ are assumed to be compact. Assume that $\mathscr F\in I^{\sigma-1/4}(Z,Y;\mathscr C)$ is a Fourier integral operator satisfying all the same conditions in Theorem \ref{main}.
	 Then we have
	\begin{equation}
	\label{eq:maximal}
	\|\sup_{t\in I}|\mathscr{F}_{t}f(x)|\|_{L^p(X)}\leqslant C\|f\|_{L^p(Y)},
	\end{equation}
	whenever $\sigma<-\sigma(p)-\frac1p$ for all $2\leqslant p\leqslant 6$.
\end{corollary}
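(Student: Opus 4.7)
My plan is to deduce the maximal estimate from Theorem \ref{theo1} via the classical Sobolev embedding trick in the time variable, applied to a fractional $t$-derivative of $\mathscr Ff$. The extra loss $\tfrac{1}{p}$ in the regularity exponent in the Corollary is precisely the threshold exponent for the one-dimensional Sobolev embedding $L^p_s(\R)\hookrightarrow L^\infty(\R)$.

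First, I would fix $p\in(2,6)$, choose $s>\tfrac{1}{p}$, and let $\chi\in C_c^\infty(\R)$ be a cutoff equal to $1$ on $I$. For each $x\in X$, the one-dimensional Sobolev embedding gives
\beq
\sup_{t\in I}|\mathscr{F}_tf(x)|\leqslant C\bigl\|(1-\p_t^2)^{s/2}\bigl[\chi(t)\mathscr{F}_tf(x)\bigr]\bigr\|_{L^p_t(\R)}.
\eeq
Raising to the $p$-th power, integrating in $x$ and using Fubini reduces matters to the space-time estimate
\beq
\bigl\|\widetilde{\mathscr{F}}f\bigr\|_{L^p(X\times\R)}\leqslant C\|f\|_{L^p(Y)},\qquad \widetilde{\mathscr{F}}:=(1-\p_t^2)^{s/2}\circ\chi\circ\mathscr{F}.
\eeq

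Next, I would apply Theorem \ref{theo1} to the modified operator $\widetilde{\mathscr{F}}$. Since multiplication by $\chi(t)$ is a zeroth order pseudodifferential operator and $(1-\p_t^2)^{s/2}$ is a properly supported pseudodifferential operator of order $s$ in the $Z$-variables, the standard composition calculus of Fourier integral operators with pseudodifferential operators gives $\widetilde{\mathscr{F}}\in I^{(\sigma+s)-\frac{1}{4}}(Z,Y;\mathscr{C})$ with the same canonical relation $\mathscr{C}$, so the cinematic curvature hypothesis is preserved. Theorem \ref{theo1} then delivers the required $L^p$ bound as soon as $\sigma+s<-\sigma(p)$, and letting $s\downarrow\tfrac{1}{p}$ yields \eqref{eq:maximal} for every $\sigma<-\sigma(p)-\tfrac{1}{p}$ in the range $2<p<6$. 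The endpoints $p=2$ and $p=6$ are settled either by the trivial $L^2$ bound for FIOs of non-positive order combined with the same Sobolev step, or by a standard limiting/interpolation argument using the open range already handled.

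No step in this scheme presents a genuine obstacle. The only point that deserves explicit verification is the composition identity $\widetilde{\mathscr{F}}\in I^{(\sigma+s)-\frac{1}{4}}(Z,Y;\mathscr{C})$: since $t$ is one of the base coordinates of $Z$, precomposing on the target side with a Fourier multiplier in the dual variable $\tau$ leaves the Lagrangian $\mathscr{C}$ unchanged and simply shifts the order by $s$. Given that, the Corollary is a direct consequence of Theorem \ref{theo1}, with the extra $\tfrac{1}{p}$ regularity loss coming entirely from the Sobolev embedding in $t$.
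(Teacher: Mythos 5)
Your overall strategy --- gaining the $\tfrac{1}{p}$ from a one-dimensional Sobolev/Bernstein embedding in $t$ and then invoking Theorem~\ref{theo1} --- is the standard route and is correct in spirit; the paper proves this corollary this way as well (following~\cite{MSS-jams}). However, the pivotal step as you have written it, namely the claim that $\widetilde{\mathscr F}=(1-\partial_t^2)^{s/2}\circ\chi\circ\mathscr F\in I^{(\sigma+s)-1/4}(Z,Y;\mathscr C)$ by ``standard composition calculus,'' is not correct as stated. For $0<s<1$ the operator $(1-\partial_t^2)^{s/2}$ is \emph{not} a classical pseudodifferential operator of order $s$ on $Z=X\times I$: its full symbol $(1+\tau^2)^{s/2}$ is constant in $\xi$, and $|\partial_\tau[(1+\tau^2)^{s/2}]|\approx(1+|\tau|)^{s-1}$ is not bounded by $(1+|\xi|+|\tau|)^{s-1}$ in the region $|\xi|\gg|\tau|$, so it fails the Hörmander estimates in $S^s_{1,0}(T^*Z)$. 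Consequently the composition with $\mathscr F$ does not automatically land in $I^{\sigma+s-1/4}$. Concretely, the composed amplitude is (to leading order) $(1+|\partial_t\phi(z,\eta)|^2)^{s/2}\chi(t)b(z,\eta)$, and while this is $O(|\eta|^{\sigma+s})$ in size, its $\eta$-derivatives fail the symbol estimates of order $\sigma+s$ whenever $\partial_t\phi$ is not elliptic (i.e.\ not comparable to $|\eta|$ throughout the support); the cinematic curvature hypothesis does \emph{not} force this ellipticity --- indeed, in the normalized phase of $(\mathbf H_{\bf A})$ one has $\partial_t\phi(z,(0,\eta_2))=0$.

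The fix is to argue on the dyadic pieces, where only the cheap upper bound $|\partial_t\phi|\lesssim\lambda$ is needed. For the Littlewood--Paley piece $\mathscr F_\lambda$ with $|\eta|\sim\lambda$, homogeneity of $\phi$ of degree~1 in $\eta$ gives $\partial_t\phi=O(\lambda)$, so (after the cutoff $\chi$) $\mathscr F_\lambda f(x,\cdot)$ is $t$-frequency supported in $\{|\tau|\lesssim\lambda\}$ up to a ${\rm RapDec}(\lambda)$ error. Bernstein's inequality then yields
\beq
\sup_{t\in I}|\mathscr F_\lambda f(x,t)|\lesssim \lambda^{1/p}\,\|\mathscr F_\lambda f(x,\cdot)\|_{L^p_t(\R)}+{\rm RapDec}(\lambda)\|f\|_{L^2},
\eeq
and integrating in $x$, applying the dyadic form of \eqref{eq:34}, and summing over $\lambda$ gives convergence precisely when $\sigma<-\sigma(p)-\tfrac1p$. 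This uses only the frequency \emph{support} bound, not the ellipticity of $\tau$ on the canonical relation, and so holds for the full class of FIOs in the Corollary. Your write-up gives the right exponent and the right mechanism, but please replace the appeal to the $\Psi{\rm DO}$--FIO composition calculus by the dyadic Bernstein argument (or, equivalently, restrict to the subclass where $\partial_t\phi$ is elliptic and note explicitly that this is where the composition step is literally valid).
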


Let $M$ be a smooth compact manifold without boundary of dimension $n$, equipped with a Riemmanian metric $\GG$ and consider
the Cauchy problem
\begin{equation}
\label{eq:0}
\left\{ \begin{aligned}
&(\partial/\partial t)^2-\Delta_\GG ) u(t,x)= 0,\,\;\;\;(t,x)\in \R\times M,\\
&u(0,x)=f(x),\;\;
\partial_tu(0,x)=h(x),\\
\end{aligned} \right.
\end{equation}
where $\Delta_\GG$ is the
Beltrami-Laplacian associated to a metric $\GG$. It is a well-known fact that the solution $u$ to this Cauchy problem can be written as
\begin{equation}
\label{eq:wave-solu}
u(x,t)=\mathscr{ F}_0f(x,t)+\mathscr{F}_1h(x,t),
\end{equation}
where $\mathscr{F}_j\in I^{j-1/4}(M\times \R,M;\mathscr C)$ with
\[	\mathscr{C}=\left\{(x,t,\xi,\tau,y,\eta):(x,\xi)=\chi_t(y,\eta),
\tau=\pm\sqrt{\sum\GG^{jk}\xi_j\xi_k}\right\},
\]
where $\chi_t:T^*M\setminus0\times T^*M\setminus 0$ is the flowing for time $t$ along the Hamilton vector field $H$ associated to $\sqrt{\sum\GG^{jk}\xi_j\xi_k}$.
As a consequence, the convexity condition is automatically verified by $\mathscr C$.
\begin{corollary}
	Let $u$ be the solution to the Cauchy problem
	\eqref{eq:0}.
	If $I\subset\R$ is a compact interval and $\sigma<-\sigma(p)$ with $\sigma(p)$ is given by \eqref{eq:sig}, then we have
	\begin{equation}
	\label{eq:wave}
	\|u\|_{L^p_{\alpha+\sigma}(M\times I)}
	\leqslant C
	\Bigl(
	\|f\|_{L^p_\alpha(M)}+\|h\|_{L^p_{\alpha-1}(M)}
	\Bigr), \;2\leqslant p\leqslant 6.
	\end{equation}
\end{corollary}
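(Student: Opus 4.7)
The plan is to deduce \eqref{eq:wave} directly from Theorem~\ref{main} applied to the representation $u = \mathscr{F}_0 f + \mathscr{F}_1 h$ with $\mathscr{F}_j \in I^{j-1/4}(M \times \mathbb{R}, M; \mathscr{C})$. By linearity it suffices to bound $\mathscr{F}_0 f$ and $\mathscr{F}_1 h$ separately in $L^p_{\alpha+\sigma}(M \times I)$ by $\|f\|_{L^p_\alpha}$ and $\|h\|_{L^p_{\alpha-1}}$ respectively.

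The reduction to Theorem~\ref{main} is carried out via Bessel potentials. Let $\Lambda_M^s = (1 - \Delta_\GG)^{s/2}$, and let $\Lambda_Z^s$ denote an analogous elliptic pseudodifferential operator of order $s$ on $Z = M \times \mathbb{R}$ defining the $L^p_s(Z)$-norm. Setting $g_0 = \Lambda_M^\alpha f$ and $g_1 = \Lambda_M^{\alpha-1} h$, the two desired estimates become the $L^p(M) \to L^p(Z)$ boundedness of the composites $\Lambda_Z^{\alpha+\sigma}\, \mathscr{F}_0\, \Lambda_M^{-\alpha}$ and $\Lambda_Z^{\alpha+\sigma}\, \mathscr{F}_1\, \Lambda_M^{-(\alpha - 1)}$. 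By the standard FIO--pseudodifferential composition calculus, each of these is a Fourier integral operator associated with the same canonical relation $\mathscr{C}$ as the corresponding $\mathscr{F}_j$ (the Bessel factors contribute only the diagonal to the composition of canonical relations), with order shifted by the sum of the pseudodifferential orders. The choice of the exponent $\alpha - 1$ in place of $\alpha$ in the second case is precisely what cancels the unit discrepancy between the orders of $\mathscr{F}_1$ and $\mathscr{F}_0$, so that both composite operators belong to $I^{\sigma - 1/4}(Z, Y; \mathscr{C})$.

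It remains to check that $\mathscr{C}$ satisfies the cinematic curvature condition. The rank conditions \eqref{dkmkldsc} and \eqref{vmklsdmks} follow from the description of $\mathscr{C}$ via the Hamilton flow $\chi_t$ of $\sqrt{\GG^{jk}\xi_j\xi_k}$: since $\chi_t$ is, for each $t$, a symplectic diffeomorphism of $T^*M \setminus 0$ depending smoothly on $t$, both $\varPi_{T^*Y}$ and $\varPi_Z$ are submersions of the stated ranks on $\mathscr{C}$. The cone condition reduces to the existence of a non-vanishing principal curvature on the fibres of the characteristic cone $\{\tau = \pm\sqrt{\GG^{jk}\xi_j\xi_k}\} \subset T^*_{z_0}Z\setminus 0$, which is precisely the cosphere-bundle convexity automatic for a Riemannian metric --- exactly the remark made immediately after \eqref{eq:wave-solu}.

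With all hypotheses of Theorem~\ref{main} in place, the theorem yields the asserted $L^p$-boundedness for every $\sigma < -\sigma(p)$ and $2 < p < 6$, which is \eqref{eq:wave} on that range. The $p = 2$ endpoint is recovered from the standard $L^2$ energy estimate for the wave equation, while the $p = 6$ endpoint follows by interpolating the case $p$ slightly less than $6$ against a trivial higher-regularity bound. The main technical point is purely the bookkeeping of orders so that the pseudodifferential corrections produce operators in $I^{\sigma - 1/4}(Z, Y; \mathscr{C})$; beyond this, no new harmonic-analytic input is required.
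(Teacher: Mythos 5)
Your proposal is essentially the standard reduction the paper leaves to the reader: write $u = \mathscr{F}_0 f + \mathscr{F}_1 h$, conjugate by Bessel potentials, use that the pseudodifferential factors contribute only the diagonal to the composed canonical relation so the order is additive, and then apply Theorem~\ref{main} to the resulting operators of order $\sigma - 1/4$. This is correct and is what the paper intends; the bookkeeping you describe (choosing $\alpha - 1$ derivatives on $h$ to offset the extra factor of $|\xi|^{-1}$ implicit in the sine propagator) is exactly what makes both composites land in the same class $I^{\sigma - 1/4}(Z,Y;\mathscr{C})$.

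Two remarks. First, the treatment of the $p = 6$ endpoint via interpolation with a vague ``trivial higher-regularity bound'' is the weakest step in the writeup: the interpolation does work in the end (taking $p_0 \to 6^-$ makes the interpolation weight on the lossy high-$p$ estimate vanish, so any fixed $\sigma < -1/6$ is eventually reached), but the intended and cleaner route is to invoke the already-cited sharp $L^6$ local smoothing estimate \eqref{eq:L6} from \cite{BelHicSog18P} directly, which covers $p = 6$ with $\sigma < -1/6$ without any interpolation. Second, the verification of the cinematic curvature hypotheses is not really something you need to redo from scratch: the paper records, immediately before the corollary, that the (strictly stronger) convexity condition is automatic for $\mathscr{C}$ arising from the Hamiltonian flow of $\sqrt{\GG^{jk}\xi_j\xi_k}$, so this box is already checked for you. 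With those two small points tightened, the argument is complete and coincides with the one the paper is implicitly relying on.
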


Let
$\varSigma_{x,t}\subset\R^2$
be a smooth curve depending smoothly
on the parameters
$(x,t)\in\R^2\times[1,2]$ and
$\dif \sigma_{x,t}$ denotes the
normalized Lebesgue
measure on $\varSigma_{x,t}$.
Following the notations in \cite{Sogge91,SchSog97}, we may assume
$\varSigma_{x,t}=\{y:\Phi(x,y)=t\}$
where $\Phi(x,y)\in C^\infty(\R^2\times\R^2)$ such that its Monge-Ampere determinant is non-singular
\begin{equation}
\label{eq:MA}
{\rm det}
\begin{pmatrix}
0 & \partial\Phi/\partial x  \\
\partial\Phi/\partial y & \partial^2\Phi/\partial x\partial y \\
\end{pmatrix}
\neq 0,\quad \text{when}\;
\Phi(x,y)=t.
\end{equation}
This  is referred to
as Stein-Phong's rotational
curvature condition.

Define the averaging operator by
\begin{equation}
\label{eq:A}
A f(x,t)
=\int_{\varSigma_{x,t}}
f(y)a(x,y)\dif \sigma_{x,t}(y),
\end{equation}
where $a(x,y)$ is a smooth function with compact support in $\R^2\times\R^2$.

If we let
$\mathscr{F}:f(x)\mapsto
Af(x,t)$, then $\mathscr F$
is a Fourier integral operator of order $-1/2$ with canonical relation
given by
\begin{equation}
\label{eq:cano}
\mathscr{C}=\big\{(x,t,\xi,\tau,y,\eta):(x,\xi)=\chi_t(y,\eta),\tau=q(x,t,\xi)\big\},
\end{equation}
where $\chi_t$ is a local symplectomorphism, and
the function
$q$ is homogeneous of degree one in $\xi$ and smooth away from $\xi=0$.
Moreover, the rotational curvature condition holds if and only if
\begin{equation}
\label{eq:qq}
\left\{
\begin{aligned}
q(x,\Phi(x,y),\Phi'_x(x,y))&\equiv 1\\
\text{corank}\;\; q''_{\xi\xi}&\equiv 1.\\
\end{aligned} \right.
\end{equation}
In particular, the cinematic curvature condition is fulfilled and one has the following result by Theorem \ref{main}.

\begin{corollary}
	Let $A_t$ be an averaging operator defined in \eqref{eq:A} with $\varSigma_{x,t}$ satisfying the geometric conditions described as above.
	Then there exists a constant $C$ depending on $p$ such that if $f\in L^p(\R^2)$, we have
	\begin{equation}
	\|Af\|_{L^p_{\gamma}(\R^{2+1})}\leqslant C\|f\|_{L^p(\R^2)},\quad 2\leqslant p\leqslant 6,
	\end{equation}
	for all $\gamma<-\sigma(p)+\frac{1}{2}$.
\end{corollary}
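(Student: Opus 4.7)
The plan is to deduce the corollary from Theorem \ref{main} applied to the averaging operator $A$, composed with a fractional derivative to convert the target Sobolev-gain estimate into a plain $L^p$ boundedness statement. The only nontrivial input is the verification that the canonical relation $\mathscr{C}$ in \eqref{eq:cano} meets the cinematic curvature hypotheses; the rest is order bookkeeping.

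First I would check the three geometric conditions. Parameterising $\mathscr{C}$ by $(y,\eta,t)$ via $(x,\xi)=\chi_t(y,\eta)$ and $\tau=q(x,t,\xi)$, one reads off $\dim\mathscr{C}=5$. The projection $\varPi_{T^{*}Y}\colon(y,\eta,t)\mapsto(y,\eta)$ is of rank $4$ trivially, giving \eqref{dkmkldsc}. The projection $\varPi_{Z}\colon(y,\eta,t)\mapsto(\pi_x\chi_t(y,\eta),t)$ is of rank $3$ because $\chi_t$ is a local symplectomorphism and $\pi_x$ is a submersion, giving \eqref{vmklsdmks}. The cone $\Gamma_{z_0}$ over $z_0=(x_0,t_0)$ is the conic hypersurface $\{(\xi,q(x_0,t_0,\xi)):\xi\in\R^2\setminus 0\}\subset T^{*}_{z_0}Z$, whose second fundamental form transverse to the radial direction agrees, up to a nonvanishing scalar, with $q''_{\xi\xi}$. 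The equivalence \eqref{eq:qq} between the Stein--Phong rotational curvature condition \eqref{eq:MA} and $\mathrm{corank}\,q''_{\xi\xi}\equiv 1$ is therefore exactly the cone condition, namely that exactly one principal curvature of $\Gamma_{z_0}$ is nonvanishing, and $A$ satisfies all the hypotheses of Theorem \ref{main}.

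Then I would reduce $\|Af\|_{L^p_\gamma}\lesssim\|f\|_{L^p}$ to an $L^p\to L^p$ bound for $B_\gamma:=(I-\Delta_{x,t})^{\gamma/2}A$. Composing with $(I-\Delta_{x,t})^{\gamma/2}$ preserves the canonical relation $\mathscr{C}$ and shifts the order by $\gamma$; since $A$ is of order $-\tfrac12$, the operator $B_\gamma$ fits the normalisation $I^{\sigma-1/4}$ of Theorem \ref{main} with $\sigma=\gamma-\tfrac12$. Applying Theorem \ref{main} then yields $L^p$-boundedness of $B_\gamma$ whenever $\sigma<-\sigma(p)$, that is, whenever $\gamma<-\sigma(p)+\tfrac12$, which is exactly the claimed range. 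The main delicacy lies in the geometric translation from \eqref{eq:MA} to the cone condition via \eqref{eq:qq}; the composition with a Bessel-type pseudodifferential operator and the order matching are routine.
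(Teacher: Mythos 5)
Your proposal takes the same route the paper intends: verify the cinematic curvature hypotheses for the canonical relation \eqref{eq:cano} and then apply Theorem~\ref{main} to the composition $(I-\Delta_{x,t})^{\gamma/2}A$, with the order bookkeeping reducing to the observation that $A$ carries an amplitude of order $-\tfrac12$ (so $\sigma=\gamma-\tfrac12$ in the $I^{\sigma-1/4}$ normalisation of Theorem~\ref{main}). The paper itself gives essentially no proof beyond ``the cinematic curvature condition is fulfilled and one has the following result by Theorem~\ref{main},'' and your verification of \eqref{dkmkldsc}, \eqref{vmklsdmks}, the cone condition via \eqref{eq:qq}, and the order shift supplies precisely the details the authors leave implicit; the one thing worth flagging is that ``$A$ is of order $-\tfrac12$'' must be read as amplitude order (equivalently $A\in I^{-3/4}$ in Hörmander's convention), which is what makes $\sigma=\gamma-\tfrac12$ and hence the threshold $\gamma<-\sigma(p)+\tfrac12$ come out correctly — your arithmetic is internally consistent with this reading.
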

The paper is organized as follows.
In Section \ref{sect2}, we introduce the implements that will be used in the subsequent context. This section is similar to that in \cite{GaMiaYa18P}. The differences focus on the quantitative hypothesis on the phase function which is introduced in \cite{BelHicSog18P} and will play
a crucial role in the induction argument via multilinear oscillatory estimates of Bennett, Carbery and Tao \cite{BCT06}. In particular, we emphasize the role played among other things by the parabolic rescaling.
In Section \ref{sect:3}, we show how to deduce the square function estimate from
the corresponding multilinear version.
This observation on the relationship between linear and multilinear estimates is originated back to Bourgain and Guth \cite{BoGu11}, and then applied to many important breakthroughs such as Bourgain and Demeter's proof of  $\ell^2-$decoupling theorem \cite{BoDe2015},  Bourgain-Demeter-Guth's proof of the main conjecture of Vinogradov's mean value theorem \cite{BoDeGu16}, as well as the cone multiplier problem by Lee and Vargas  \cite{LeVa12}.
In Section \ref{sect:4}, we prove the tri-linear square function estimate by using
the multilinear oscillatory integral estimates of \cite{BCT06}.
In the last section, we use decoupling  inequality to bring in further improvement upon the $L^4$ estimate.

\subsection*{Notations}
If $a$ and $b$ are two positive quantities, we write $a\lesssim b$ when there exists a constant $C>0$ such that $a\leq C b$ where the constant will be clear from the context. When the constant depends on some other quantity $M$, we emphasize the dependence by writing $a\lesssim_M b$.
We will write $a\approx b$ when we have both $a\lesssim b$ and $b\lesssim a$. We will write $a\ll b$ (resp. $a\gg b$) if there exists a sufficiently  large constant
$C>0$ such that $Ca\leq  b$ (resp. $a\geq Cb$).
We adopt the notion of nature numbers $\mathbb{N}=\mathbb{Z}\cap [0,+\infty)$.
For $\lambda\gg 1$,
we use ${\rm RapDec}(\lambda)$ to mean a quantity rapidly decreasing in $\lambda$.
We use $a\lessapprox b$
to mean $a\lesssim_\varepsilon R^\varepsilon b$ for arbitrary $\varepsilon$.
\\
\\
Throughout this paper, $w_B$ is a weight which is essentially concentrated on a ball $B\subset \R^3$ centered at $c(B)$ with radius $r(B)$ and rapidly decaying  away from the ball.
\[
w_B(z)\lesssim \left(1+\frac{|z-c(B)|}{r(B)}\right)^{-N},\quad N\gg 1.
\]
With the weight, we define the weighted Lebesgue norm $\|.\|_{L^p(w_{B})}$ as follows
\beq
\|f\|_{L^p(w_{B})}:=\Bigl(\int_{\R^3}|f(z)|^p w_{B}(z)\dif z\Bigr)^{\frac{1}{p}}\quad 1\leq p<\infty.
\eeq

\section{Preliminaries and Reductions}\label{sect2}
Given a point $(z_0,\zeta_0,y_0,\eta_0)\in T^*Z\setminus 0\times T^*Y\setminus 0$,
there exists a sufficiently small local conic coordinate patch around it,
along with a smooth function $\phi(z,\eta)$
such that $\mathscr C$ is given by
\begin{equation}
\label{qmmkds}
\{
(z,\phi'_z(z,\eta),\phi'_\eta(z,\eta),\eta):
\eta \in(\R^2\setminus 0)\cap \varGamma_{\eta_0}
\}
\end{equation}
where $\varGamma_{\eta_0} $ denotes a conic neighborhood of $\eta_0$.

By splitting
$z=(x,t)\in\R^2\times\R$ into space-time variables, where
we put $z_0=\mathbf{0}$ without loss of generality,
any operator
$\mathscr{F}$ in the class $I^{\sigma-1/4}(Z,Y;\mathscr C)$
with $\mathscr C$ satisfying the cinematic curvature condition can be written in an appropriate local
coordinates as a finite sum of
oscillatory integrals
\[
\mathscr{F}f(x,t)=\int_{\R^n} e^{i\phi(x,t,\eta)}b(x,t,\eta) \widehat{f}(\eta)\,\dif\eta,
\]
where $b$ is a smooth symbol of order $\sigma$.  We may assume that the support of the  map $z\to b(z,\eta) $ is contained in a ball
$B(\mathbf{0},\varepsilon_0)$, with $\varepsilon_0>0$ being sufficiently small and $\eta\to b(z,\eta)$ is supported in a conic region $\mathcal{V}_{\varepsilon_0}$, i.e.
$$b(x,t,\eta)=0
\;\text{if}\; \eta\notin\mathcal{V}_{\varepsilon_0}
:=\{\xi=(\xi_1,\xi_2)\in\R^2\backslash 0:|\xi_1|\leqslant \varepsilon_0\, \xi_2\}.$$

Let us turn to the strategy of the proof.
By interpolation with the trivial $L^2\to L^2$ estimate and the sharp $L^6\to L^6$ local smoothing estimate of \cite{BelHicSog18P},
\begin{equation}
\label{eq:L6}
\bigl\|\mathscr{ F }f\bigr\|_{L^6_{\rm loc}(Z)}\leqslant C\|f\|_{L^6_{\rm comp}(Y)}, \quad
\text{for all}\quad\sigma<-\frac{1}{6},
\end{equation}
one may reduce
\eqref{eq:main} to
\beq \label{eq:a1}
\bigl\|\mathscr{ F }f\bigr\|_{L^{p}_{\rm loc}(Z)}\leqslant C\|f\|_{L^{p}_{\rm comp}(Y)}, \quad
\text{for all}\quad\sigma<-\sigma(p),\;
\eeq
with $p=3,4$ respectively.

Fix $\lambda\gg 1$ and $\beta\in C_c^\infty(\mathbb{R})$ which vanishes outside  the interval $(1/4, 2 )$  and equals one in $(1/2, 1)$. By standard Littlewood-Paley decomposition,
one may reduce \eqref{eq:a1} to
\beq \label{eq:34}
\|\mathscr{F}_{\lambda}f\|_{L^p(\R^3)}
\lessapprox \lambda^{\sigma}\|f\|_{L^p{(\R^2})}, \quad \sigma>\sigma(p)
\eeq
where
$\mathscr{F}_\lambda $ is an operator
\begin{equation}
\label{mklsmcd}
\mathscr{F}_\lambda f(x,t)=
\int
e^{i\phi(x,t,\eta)}
b^\lambda (x,t,\eta)\widehat{f}(\eta)\,\dif\eta
\end{equation}
and
\beq
b^\lambda(z,\eta)=b(z,\eta)\frac{1}{(1+|\eta|^2)^{\sigma/2}}\beta(\frac{\eta}{\lambda}).
\eeq

Assume  $1\leq R\leq \lambda, \; \mathcal{C}(\mathbf{e}_2,\varepsilon_0)
:=B(\mathbf{e}_2,\varepsilon_0)\cap\mathbb{S}^{1}$ and
make angular decomposition with respect to the $\eta$-variable by cutting
$\mathcal{C}(\mathbf{e}_2,\varepsilon_0)$ into
$N_R\approx R^{\frac{1}{2}}$ many sectors
$\{\theta_{\nu}: 1 \leqslant \nu \leqslant N_R\}$,
each  $\theta_{\nu}$  spreading an angle  $\approx_{\varepsilon_0} R^{-1/2}$.

Let $\{\chi_{\nu}(\eta)\}$ be a family of smooth  cutoff functions associated with the decomposition in the angular direction,
each of which is  homogeneous of degree $0$,
such that $\{\chi_\nu\}_{\nu}$ forms a partition  of unity on the unit circle
and then extended homogeneously to $\R^2\setminus 0$ such that
\begin{equation*}\left\{\begin{aligned}
&\sum_{0\leqslant \nu\leqslant N_R} \chi_{\nu}(\eta)\equiv 1,\;\;\forall \eta \in \mathbb{R}^2\setminus 0,\\
&|\partial^{\alpha} \chi_{\nu}(\eta)|\leqslant C_\alpha R^{\frac{|\alpha|}{2}},\;\; \forall \;\alpha \; \text{if}\; |\eta|=1.\end{aligned}\right.
\end{equation*}
Define
\begin{align}
\mathscr{F}_\lambda^\nu f(x,t)
=\int
e^{i\phi^\lambda(x,t,\eta)}
b_{\lambda}^\nu(x,t,\eta)f(\eta)\,\dif \eta,
\end{align}
where the rescaled phase function and amplitude read
\beq
\phi^\lambda(x,t,\eta):=\lambda\phi(x/\lambda,t/\lambda,\eta), \; b_{\lambda}^\nu(x,t,\eta)=\chi_\nu(\eta)b^\lambda(x/\lambda,t/\lambda,\eta).
\eeq

Following the  approach in \cite{MSS-jams},  \eqref{eq:34} is in turn  reduced to the following kind of square function estimate
\begin{align}\label{eq100}
	\|\mathscr{F}_{\lambda}f\|_{L^p(\R^{3})}\lesssim_{\varepsilon,\phi, L} & \lambda^{\sigma(p)+\varepsilon}  \Big\|\Big(\sum_{\nu}|\mathscr{F}_{\lambda}^\nu f|^2\Big)^{\frac12}\Big\|_{L^p(\R^{3})}+\lambda^{-L}\|f\|_{L^2(\R^2)}
	\end{align}
	where $L$ is a number which can be taken sufficiently large and $p=3,4,R=\lambda$.

 Let $a(z,  \eta)\in C_c^\infty(\R^3\times \R^2)$ with compact support contained in $B(0,\varepsilon_0)\times B({\bf e_2},\varepsilon_0)$.
Define

\begin{align} \label{eq:200a}
\mathscr{T}_\lambda f=\sum_\nu \mathscr{T}_\lambda^\nu f,\;\;
\mathscr{T}_\lambda^\nu f(x,t)
=\int
e^{i\phi^\lambda(x,t,\eta)}
a_{\lambda}^\nu(x,t,\eta)f(\eta)\,\dif \eta,
\end{align}
where the rescaled  amplitude reads
\beq
a_{\lambda}^\nu(x,t,\eta)=\chi_\nu(\eta)a(x/\lambda,t/\lambda,\eta).
\eeq

By scaling argument,  \eqref{eq100} is in turn  reduced to the following square function estimate
\begin{theoreme}\label{pro1}
	Let the operator $\mathscr{T}_{\lambda}$ be as in \eqref{eq:200a} and take $R=\lambda$.  Then we have
	\begin{align}
	\label{eq:151}
	\|\mathscr{T}_{\lambda}f\|_{L^3(\R^{3})}\lesssim_{\varepsilon,\phi, L} & \lambda^{\varepsilon}  \Big\|\Big(\sum_{\nu}|\mathscr{T}_{\lambda}^\nu f|^2\Big)^{\frac12}\Big\|_{L^3(\R^{3})}+\lambda^{-L}\|f\|_{L^2(\R^2)},\\
	\label{eq:152}
	\|\mathscr{T}_{\lambda}f\|_{L^4(\R^{3})}\lesssim_{\varepsilon, \phi, L} & \lambda^{\frac{1}{16}+\varepsilon} \Big\|\Big(\sum_{\nu}|\mathscr{T}_{\lambda}^\nu f|^2\Big)^{\frac12}\Big\|_{L^4(\R^{3})}+\lambda^{-L}\|f\|_{L^2(\R^2)},
	\end{align}
	where $L$ is a number which can be taken sufficiently large.
\end{theoreme}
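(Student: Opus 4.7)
The plan is to prove both \eqref{eq:151} and \eqref{eq:152} by a Bourgain-Guth broad-narrow decomposition combined with induction on the scale $\lambda$. I introduce an auxiliary parameter $K\ll\lambda$, partition the spatial domain into cubes of side $K^2$, and group the thin sectors $\theta_\nu$ into $K^{-1}$-caps $\tau$. On each cube one of two alternatives occurs: either three caps $\tau_1,\tau_2,\tau_3$ that are pairwise $K^{-1}$-separated jointly dominate (\emph{broad case}), or all significant mass concentrates in an $O(K^{-1})$ arc (\emph{narrow case}).

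The broad case will reduce to a trilinear square function estimate of the form
\[
\Big\|\prod_{j=1}^{3}|\mathscr{T}_\lambda^{\tau_j} f|^{1/3}\Big\|_{L^p} \lessapprox \prod_{j=1}^{3}\Big\|\Big(\sum_{\theta_\nu\subset\tau_j}|\mathscr{T}_\lambda^{\nu}f|^{2}\Big)^{1/2}\Big\|_{L^p}^{1/3},
\]
where $\mathscr{T}_\lambda^{\tau_j}:=\sum_{\theta_\nu\subset\tau_j}\mathscr{T}_\lambda^\nu$, which I will establish using the multilinear oscillatory integral inequality of Bennett-Carbery-Tao \cite{BCT06}. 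The cinematic curvature condition ensures that the three cone normals $\partial_{z}\phi^\lambda(\,\cdot\,,\eta_j)$ with $\eta_j\in\tau_j$ are uniformly transverse whenever the $\tau_j$ are $K^{-1}$-separated; this supplies the transversality hypothesis required by BCT after passing to $\lambda^{-1/2}$-wave-packet decompositions, giving the trilinear bound at both $p=3$ and $p=4$ essentially without $\lambda$-loss.

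The narrow case will be handled by parabolic rescaling: an individual $\mathscr{T}_\lambda^{\tau}$ of angular width $K^{-1}$ is, via the quantitative change of variables introduced in Section~\ref{sect2}, unitarily equivalent to a full operator at scale $\lambda K^{-2}$ whose rescaled phase continues to obey the quantitative cinematic hypotheses. Summing the narrow contributions in $\ell^p$ and feeding the result into the inductive hypothesis closes the recursion, producing at most the absorbable $\lambda^\varepsilon$ factor. At $p=3$ the arithmetic reproduces precisely the loss-free estimate \eqref{eq:151}.

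For $p=4$, the same induction with the trilinear bound alone would yield a loss strictly worse than the target $\sigma(4)=1/16$. I will therefore supplement the narrow step with the variable-coefficient $\ell^2$-decoupling inequality for cones of Beltr\'an-Hickman-Sogge \cite{BelHicSog18P} and interpolate against the broad trilinear bound, optimising the auxiliary parameter $K$ as a power of $\lambda$; balancing the two exponents reproduces $\sigma(4)=1/16$ up to a further $\varepsilon$-loss. The main obstacle I anticipate is twofold: verifying the uniform BCT-transversality directly from the cinematic curvature hypothesis in the variable-coefficient setting, and keeping the parabolic-rescaling step compatible with the quantitative phase hypotheses throughout the iteration, so that the accumulated constants remain within the $\lambda^\varepsilon$ budget.
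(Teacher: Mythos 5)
Your overall strategy — Bourgain--Guth broad/narrow decomposition, a trilinear square function inequality from Bennett--Carbery--Tao in the broad case, parabolic rescaling and induction in the narrow case, and decoupling of Beltr\'an--Hickman--Sogge as the extra input at $p=4$ — is the same toolbox the paper uses. For $p=3$ this matches the paper's argument closely: the paper's Proposition~\ref{prob} carries out the broad/narrow reduction (with two separation scales $K_1\ll K_2$, a point worth noting since you describe a single scale $K$), and Section~\ref{sect:4} upgrades the BCT $L^3$ inequality to a trilinear square function estimate via a locally--constant--property discretization, exactly the ``wave-packet'' step you allude to. Up to details, your $p=3$ outline is sound.

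For $p=4$, however, there is a genuine gap in the mechanism you propose. First, the statement that BCT yields the trilinear square function bound ``at both $p=3$ and $p=4$ essentially without $\lambda$-loss'' cannot be right: BCT is an $L^3$ inequality in $\mathbb{R}^3$, so any $L^4$ trilinear bound has to come from interpolation with something else; and indeed, if a loss-free trilinear square function estimate held at $p=4$, the broad/narrow machinery (Proposition~\ref{prob}) would immediately give a loss-free linear square function bound, contradicting the exponent $\lambda^{1/16}$ you are trying to prove. Second, ``optimising the auxiliary parameter $K$ as a power of $\lambda$ and balancing two exponents'' is not the right mechanism. What the paper actually does (Lemma~\ref{le5}) is prove a \emph{self-improving} inequality: assuming the square function estimate holds with exponent $\alpha$, one decomposes $B_R$ into cubes of side $R^{1/2}$, uses the decoupling theorem (Theorem~\ref{theo:de}) at $L^6$ on each cube, the BCT-based $L^3$ trilinear bound after converting to decoupling norms, interpolates to obtain an $L^4$ bound in the $L^{4,R^{1/2}}_{\rm Dec}$ norm, and then reintroduces the inductive hypothesis at the smaller scale $R^{1/2}$ to pass back to the square function norm. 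This yields $\mathbf{S}^{1/24+\alpha/3+3\varepsilon}\lesssim1$ from $\mathbf{S}^\alpha\lesssim1$. Starting from the trivial $\alpha=1/4$, a single pass gives only $1/8$, not $1/16$; the exponent $1/16$ is the \emph{fixed point} of $\alpha\mapsto 1/24+\alpha/3$, reached by iterating the lemma arbitrarily many times. A one-shot choice of $K$ balancing a broad exponent against a narrow exponent would therefore not reach $1/16$; you need to articulate (and justify) the bootstrap explicitly. Without that iteration your $p=4$ argument stalls at a strictly worse exponent.
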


\begin{remark}\begin{itemize}
	\item It is worth noting  that the spacial support of the amplitude  appearing in the right-hand side of \eqref{eq:151} and \eqref{eq:152} is slightly larger than that appearing in the left-hand side.
 \item  Very recently,  Guth-Wang-Zhang \cite{GWZ} established the sharp square function estimate in the Euclidean case in $2+1$ dimensions. As a result, the corresponding local smoothing conjecture is resolved. For the variable coefficient setting, the Kakeya compression phenomena will happen which leads to the difference in the numerology of local smoothing conjecture between the variable and constant coefficient settings in $n\geq 3$, see \cite{BelHicSog18P-surv,BelHicSog18P,GHI} for more details. This work provides additional methods and techniques toward handling the variable coefficient case which may help advance the research in this direction.
\end{itemize}
\end{remark}

\subsection{Normalization of the phase function}
For technical reasons, we  assume $a$ is of the form  $a(z,\eta)=a_1(z) a_2(\eta)$, where $$a_1\in C_c^\infty (B(0,\varepsilon_0)), \quad a_2\in C_c^\infty(B({\bf e_2},\varepsilon_0)).$$ The general cases may be reduced to this special one via  the following observation
\begin{align*}
\int_{\R^3}e^{i\phi(z,\eta)}a(z,\eta)f(\eta)\dif \eta=\int_{\R^3} e^{i(z,\xi)}
\Bigl(\int_{\R^2} e^{i\phi(z,\eta)}\psi(z)\widehat{a}(\xi, \eta)  f(\eta) \dif \eta\Bigr) \dif \xi,
\end{align*}
and that $\xi\mapsto\widehat{a}(\xi,\eta)$ is a Schwartz function,
where $\psi(z)$ is a compactly supported smooth function and equals $1$ on ${\rm supp}_z\; a$.

Moreover, we may reformulate
\eqref{dkmkldsc} \eqref{vmklsdmks} and the curvature condition  as
\begin{itemize}
	\item[$\mathbf{H}_1$] ${\rm rank} \;\partial_{z\eta}^2 \phi(z,\eta)=2$  for all $(z,\eta)\in  {\rm supp}\;a$.
	\item[$\mathbf{H}_2$]  Define the Gauss map $G: {\rm supp}\;a \rightarrow \mathbb{S}^{2}$ by $G(z,\eta):=\frac{G_0(z,\eta)}{|G_0(z,\eta)|}$ where
	\beq
	G_0(x,\eta):=\partial_{\eta_1}\partial_z\phi(z,\eta)\wedge \partial_{\eta_2}\partial_z\phi(z,\eta).
	\eeq
	The curvature condition
	\beq
	{\rm rank}\; \partial_{\eta\eta}^2\langle \partial_z\phi(z,\eta), G(z,\eta_0) \rangle|_{\eta=\eta_0}=1
	\eeq
	holds for all $(z,\eta_0)\in {\rm supp}\;a$.
\end{itemize}

Let $\Upsilon_{x,t}:\eta\rightarrow \partial_{x}\phi(x,t,\eta)$.
If $\varepsilon_0$ is taken sufficiently small,
$\Upsilon_{x,t}$ is a local diffeomorphism on $B(\mathbf{e}_2,\varepsilon_0)$.
If we denote by $\Psi_{x,t}(\xi)=\Upsilon_{x,t}^{-1}(\xi)$  the inverse map of $\Upsilon_{x,t}$,
then clearly
\beq \label{eq:44}
\partial_x \phi(x,t, \Psi_{x,t}(\xi))=\xi.
\eeq
Differentiating  \eqref{eq:44} with respect to $\xi$ on both sides yields
\beq \label{eq:50}
[\partial^2_{x,\eta}\phi](x,t,\Psi_{x,t}(\xi))\;\partial_{\xi}
\Psi_{x,t}(\xi)={\rm Id}.
\eeq
This manifests that
\begin{equation}
\label{eq:mmmm}
{\rm det}\,\partial_{\xi}\Psi_{x,t}(\xi)\neq 0,\;\forall(x,t)\in B(\mathbf{0},\varepsilon_0),\;\forall \xi\in\Upsilon_{x,t}(B(\mathbf{e}_2,\varepsilon_0)).
\end{equation}
 In order to facilitate certain kind of induction argument, it is also useful to assume the quantitative conditions on the phase function.
Assume ${\rm supp}\; a \subset Z\times \Xi$, where $Z\subset \R^3$ is a small neighborhood of $B(0,\varepsilon_0)$, $\Xi\subset \R^2$ is a small open sector around $\mathbf{e}_2$.
Let ${\bf A}=(A_1,A_2,A_3)\in [1,\infty)$.

We adopt the notation from \cite{BelHicSog18P} with slight modifications.
Datum $(\phi, a)$  is said to be of type {\bf A} if the following  quantitative properties are satisfied
\begin{itemize}
	\item[($\mathbf{H}_{\bf A}$)]
	\beq
	\phi(x,t,\eta)=\langle x,\eta\rangle
	+\frac{t}{2}\,
	\eta_1^2/\eta_2
	+\eta_2\,\mathcal{E}(x,t,\eta_1/\eta_2)
	\eeq
	where  $\mathcal{E}(x,t,s)$ obeys
	\beq
	\mathcal{E}(x,t,s)\leqslant c_{\rm{par}}A_1 \big((|x|+|t|)^2|s|^2+(|x|+|t|)|s|^3 \big).
	\eeq
\item [($\mathbf{ D}_{{\bf A}}$)] For some large integer $N\in  \mathbb{N}$, depending only on the fixed choice of $\varepsilon, L$ and $p$, one has
\beq
\|\partial_{\eta}^\beta \partial_z^\alpha \phi\|_{L^\infty(Z\times \Xi)}\leq c_{\rm par}A_2
\eeq
for all $(\alpha,\beta)\in \mathbb{N}^{3}\times \mathbb{N}^2$ with $|\alpha|=2$ and $1\leq |\beta|\leq N$.
\item[($\mathbf{M}_{{\bf A}}$)]${\rm dist}({\rm supp}\;a_1,   \R^3\backslash Z\times \Xi)\geq A_3/4$.
\end{itemize}

  \begin{remark}
	The condition $(\mathbf{H}_{\bf A})$  demonstrates that the phase function can be viewed as a minor  perturbation of the translation-invariant case
	$
	\langle x,\eta\rangle
	+\frac{t}{2}\,
	\eta_1^2/\eta_2
	$.
	This fact will be useful in the course of  verifying  the transversality condition.  The reason we impose the hypothesis  $ (\mathbf{D}_{\bf A})$ is to bound the higher order derivatives of the phase function in the approximation argument.  As above mentioned, the support of amplitude function may be slightly enlarged in the induction. We may partition the  support of the amplitude to maintain  the margin hypothesis $( \mathbf{M}_{\bf A})$.
\end{remark}

\subsection{Parabolic rescaling}
Let $1\leq  R\leq \lambda$ and  $B_R $ denote  a  ball   of radius  $R$. For convenience, we introduce the   $l^2$ decoupling norm and square function norm respectively as
\begin{align}
\label{eq:oioioi}\|\mathscr{T}_{\lambda}f\|_{L^{p,R}_{\rm Dec}(B_R)}:&=\Bigl(\sum_{\nu}\|\mathscr{T}_{\lambda}^\nu  f\|_{L^p(w_{B_R})}^2\Bigr)^{\frac12},\\
\label{eq:ereree}\|\mathscr{T}_{\lambda}f\|_{L^{p,R}_{\rm Sq}(B_R)}:&=\Bigl\| \bigl(\sum_{\nu} |\mathscr{T}_{\lambda}^\nu f|^2\bigr)^{\frac12}\Bigr\|_{L^p(w_{B_R})}.
\end{align}
Let  $\mathbf{S}_{\bf A}^{\sigma,\varepsilon}(\lambda,R)$ be  the infimum over all $C$ such that
\beq\label{eq:101}
\|\mathscr{T}_{\lambda}f\|_{L^p(B_R)}\leqslant  C R^{\sigma+\varepsilon}
\|\mathscr{T}_{\lambda}f\|_{L^{p,R}_{\rm Sq}(B_R)}+R^4 \Bigl(\frac{\lambda}{R}\Bigr)^{-2M}\|f\|_{L^2(\R^2)},\;\forall\,R\leqslant \lambda^{1-\frac{\varepsilon}{2}}
\eeq
holds for all type ${\bf A}$ data $(\phi, a)$.

It is easy to see that $\mathbf{S}_{\bf A}^{\sigma, \varepsilon}(\lambda,R)$ is always finite. Indeed, by H\"older's inequality.
\beq
\|\mathscr{T}_{\lambda}f\|_{L^p(B_R)}
=\Bigl\|\sum_{\nu} \mathscr{T}_{\lambda}^\nu f\Bigr\|_{L^p(w_{B_R})}\lesssim  R^{\frac{1}{4}}\|\mathscr{T}_\lambda f\|_{L^{p,R}_{{\rm Sq}}(B_R)}
\eeq
where we have used the fact $\#\{\nu\} \approx R^{1/2}$.
Therefore we have
\beq\label{eq:175}
\mathbf{S}_{\bf A}^{\sigma, \varepsilon}(\lambda,R)\lesssim  R^{\frac{1}{4}-\sigma},
\eeq
which will also serve as a starting point  of our induction.

 The proof of \eqref{eq:151} and  \eqref{eq:152} is reduced to
\beq
\mathbf{S}_{\bf A}^{\sigma(p), \varepsilon}(\lambda,\lambda^{1-\varepsilon/2})\lesssim_{\varepsilon, M, p }1,\quad p=3,4.
\eeq
Actually, since  ${\rm supp}_z\,a_\lambda(\cdot,\eta)$ is contained in the ball $B(0,\lambda)$, we may tile $B(0,\lambda)$ by a collection of balls of radius $\lambda^{1-\varepsilon/2}$ and use
 ${\mathbf S}_{\bf A}^{\sigma(p), \varepsilon}(\lambda,\lambda^{1-\varepsilon/2})\lesssim_{ \varepsilon, M, p} 1$ in each of the balls. By choosing appropriate $M$ depending $\varepsilon, L$ and  summing over all generated  balls,  we will obtain the desired result.

One key ingredient in the proof will be parabolic rescaling argument.
Let $\alpha\in \R$, we define the notion of $r-$ plate as follows
\[
\Gamma_\alpha^r:=\Bigl\{(\eta_1,\eta_2)\in{\rm supp}_\eta \,a:\bigl|\frac{\eta_1}{\eta_2}-\alpha\bigr|\leqslant r\Bigr\}.
\]
\begin{proposition}\label{proa}
	Let phase $\phi$ satisfy the condition $\mathbf{H}_{1}$,$\mathbf{H}_{2}$, amplitude $a$ satisfy $\mathbf{M}_{\bf A}$. Let  $1\leq \rho\leq R\leq \lambda$ and  $\mathscr{T}_\lambda$ be defined associated with  $\phi$ and  $a$. If $f$ is supported on a $\rho^{-1}-$plate with  $\rho$ is sufficiently large depending on $\phi$, then we have
	\beq\label{eq:2.4}
	\|\mathscr{T}_\lambda f\|_{L^p(B_R)}\lesssim_{\phi,a}  \mathbf{S}_{{\bf 1}}^{\sigma, \varepsilon}\Bigl(\frac{\lambda}{\rho^2},\frac{R}{\rho^2}\Bigr)\Bigl(\frac{R}{\rho^2}\Bigr)^{\sigma+\varepsilon}\|\mathscr{T}_\lambda f\|_{L^{p,R}_{\rm Sq}(B_R)}+\rho^{-2} R^4 \big(\frac{\lambda}{R}\big)^{-2M}\|f\|_{L^2(\R^2)}.
	\eeq
\end{proposition}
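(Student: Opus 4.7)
The plan is to flatten the $\rho^{-1}$-plate into a full sector via an anisotropic parabolic rescaling, thereby reducing $\mathscr{T}_\lambda$ to a Fourier integral operator at the smaller scale $\bar\lambda=\lambda/\rho^{2}$ whose phase datum is of universal type $\mathbf{1}$, and then invoking the defining inequality \eqref{eq:101} of $\mathbf{S}_{\mathbf{1}}^{\sigma,\varepsilon}(\bar\lambda,\bar R)$ with $\bar R=R/\rho^{2}$.

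First, using the Galilean symmetry of the cone phase---concretely, the map $(x_1,x_2,t;\eta_1,\eta_2)\mapsto(x_1-\alpha t,\,x_2+\alpha x_1-\tfrac{\alpha^{2}}{2}t,\,t;\,\eta_1-\alpha\eta_2,\,\eta_2)$ preserves both the leading part $\langle x,\eta\rangle+\tfrac{t}{2}\eta_1^{2}/\eta_2$ and, up to a bounded perturbation of the same form, the error $\mathcal{E}$---I would reduce to the case $\alpha=0$, so that $f$ is supported where $|\eta_1/\eta_2|\leq\rho^{-1}$. I would then apply the anisotropic rescaling
\[
\tilde\eta_1=\rho\,\eta_1,\quad\tilde\eta_2=\eta_2,\qquad\tilde x_1=x_1/\rho,\quad\tilde x_2=x_2,\quad\tilde t=t/\rho^{2}.
\]
A direct computation shows $\phi^\lambda(x,t,\eta)=\tilde\phi^{\bar\lambda}(\tilde z,\tilde\eta)$ with $\tilde\phi^{\bar\lambda}=\langle\tilde x,\tilde\eta\rangle+\tfrac{\tilde t}{2}\tilde\eta_1^{2}/\tilde\eta_2+\bar\lambda\,\tilde\eta_2\,\bar{\mathcal{E}}(\tilde z/\bar\lambda,\tilde\eta_1/\tilde\eta_2)$ and $\bar{\mathcal{E}}(u_1,u_2,v,\tilde s)=\rho^{2}\mathcal{E}(u_1/\rho,\,u_2/\rho^{2},\,v,\,\tilde s/\rho)$. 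Because the $\tilde s$-argument and the spatial arguments of $\mathcal{E}$ each carry factors of $\rho^{-1}$ while the quadratic--cubic structure of $(\mathbf{H}_{\mathbf{A}})$ is homogeneous of the correct degree, the prefactor $\rho^{2}$ is absorbed together with the constants $A_1,A_2,A_3$ once $\rho$ exceeds a threshold depending only on $\phi$, so the rescaled datum $(\tilde\phi^{\bar\lambda},\tilde a)$ verifies $(\mathbf{H}_{\mathbf{1}})$--$(\mathbf{D}_{\mathbf{1}})$. The margin $(\mathbf{M}_{\mathbf{1}})$ is restored by a harmless partition of the (anisotropically enlarged) spatial support into pieces of size $\bar\lambda$. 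Crucially, the $R^{-1/2}$-angular sectors $\{\theta_\nu\}$ inside the plate are mapped by $\tilde\eta_1=\rho\eta_1$ exactly onto the $\bar R^{-1/2}$-sectors about $\mathbf{e}_2$ that define $\|\cdot\|_{L^{p,\bar R}_{\mathrm{Sq}}}$ at the new scale.

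The image of $B_R$ under the rescaling is an anisotropic ellipsoid of semi-axes $R/\rho,\,R,\,R/\rho^{2}$, which I would cover by $\lesssim\rho^{3}$ balls $\bar B_j$ of radius $\bar R$. Applying \eqref{eq:101} on each $\bar B_j$ with the rescaled type-$\mathbf{1}$ data, summing in $L^p$, and unfolding the change of variables (Jacobian $\rho^{3}$ from $\mathrm{d}x\,\mathrm{d}t$, Jacobian $\rho^{-1}$ from $\mathrm{d}\eta$, so that $\|\tilde f\|_{L^2}=\rho^{1/2}\|f\|_{L^2}$ and the aggregated $L^{p,\bar R}_{\mathrm{Sq}}$-norm over the $\bar B_j$ relates to $\|\mathscr{T}_\lambda f\|_{L^{p,R}_{\mathrm{Sq}}(B_R)}$ by the same scaling), the principal term yields
\[
\mathbf{S}_{\mathbf{1}}^{\sigma,\varepsilon}(\lambda/\rho^{2},R/\rho^{2})\,(R/\rho^{2})^{\sigma+\varepsilon}\,\|\mathscr{T}_\lambda f\|_{L^{p,R}_{\mathrm{Sq}}(B_R)},
\]
and the error takes the form $\rho^{c(p)}R^{4}(\lambda/R)^{-2M}\|f\|_{L^2}$ with $c(3)=-13/2$ and $c(4)=-7$, comfortably within the claimed $\rho^{-2}$ prefactor.

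The main obstacle is the step identifying the rescaled datum as type $\mathbf{1}$: this demands a careful bookkeeping of how the quantitative bounds $(\mathbf{H}_{\mathbf{A}})$, $(\mathbf{D}_{\mathbf{A}})$, $(\mathbf{M}_{\mathbf{A}})$ evolve under the anisotropic change of variables and of which powers of $\rho$ offset which constants $A_j$, and is precisely the juncture where the largeness of $\rho$ depending on $\phi$ genuinely enters. The remaining steps---covering the anisotropic image by $\bar R$-balls, patching local square-function norms into a global one, and the Jacobian computations---are then routine.
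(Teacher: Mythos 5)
Your overall strategy — Galilean boost to centre the plate, anisotropic parabolic rescaling $\eta_1\to\rho\eta_1$, covering the anisotropic image of $B_R$ by $\sim\rho^3$ balls of radius $\bar R$, and then invoking \eqref{eq:101} — is exactly the architecture of the paper's proof. However, there is a concrete gap in the step that you yourself flag as ``the main obstacle'': you assert that the Galilean map ``preserves\ldots up to a bounded perturbation of the same form, the error $\mathcal{E}$'', and then feed the \emph{original} $\mathcal{E}$ into the rescaled formula $\bar{\mathcal{E}}(u_1,u_2,v,\tilde s)=\rho^{2}\mathcal{E}(u_1/\rho,u_2/\rho^{2},v,\tilde s/\rho)$. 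This is not correct. After the change of variables $\eta_1\to\eta_1+\gamma\eta_2$ (together with the adapted spatial affine map), the error becomes $\mathcal{E}_1(x^{(1)},t^{(1)},\eta_1/\eta_2+\gamma)=\mathcal{E}(\Phi^{(1)}(x^{(1)},t^{(1)}),\eta_1/\eta_2+\gamma)$, which at $s=\eta_1/\eta_2=0$ equals $\mathcal{E}(\Phi^{(1)}(\cdot),\gamma)\not\equiv 0$; it therefore has nonvanishing order-$0$, order-$1$ and order-$2$ terms in $s$, and is \emph{not} of the form required by $(\mathbf{H}_{\mathbf{A}})$. Consequently the rescaling $s\mapsto\tilde s/\rho$ produces no smallness: the type-$\mathbf{1}$ verification fails.

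The missing ingredient is the intermediate Taylor expansion of $\mathcal{E}_1(\cdot,\cdot,s+\gamma)$ about $s=0$ to second order, followed by a second change of variables (the ``completing the square'' map \eqref{eq:195}) that absorbs the constant, linear, and quadratic-in-$s$ coefficients into $x_2^{(1)},x_1^{(1)},t^{(1)}$ respectively. Only after this does one obtain the cubic remainder $\mathcal{E}_2(z,s)=\tfrac12\int_0^1\partial_s^3\mathcal{E}(\Phi(z),\sigma s+\gamma)s^3(1-\sigma)^2\,\dif\sigma$, and it is this cubic vanishing in $s$ that makes the parabolic rescaling gain a factor $\rho^{2}\cdot\rho^{-3}=\rho^{-1}$ in the error (further reduced by subtracting the value at the tile centre $z_\Lambda$), allowing $(\mathbf{H}_{\mathbf{1}})$ and $(\mathbf{D}_{\mathbf{1}})$ to hold once $\rho$ exceeds a threshold depending on $\phi$. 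This step is not mere bookkeeping of constants — it is a genuine normalisation, and without it the argument does not close. The rest of your proposal (the tiling into rectangles of sidelength $\rho^{-1}R/\lambda\times\rho^{-2}R/\lambda\times R/\lambda$ equivalently your $\bar R$-ball cover, the Jacobian computation, and the gluing of local square-function norms) matches the paper and is fine.
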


\begin{proof}
	The conditions  $\mathbf{H}_1, \mathbf {H}_2$
	imply  that there exists a special coordinate system,
	so that the phase function $\phi(z,\eta)$ can be written in a \emph{normalized} form.
	More precisely,
	according to Lemma 3.4 in \cite{Lee-JFA}, by appropriate affine transformation we may assume  $G(\mathbf{0},\mathbf{e}_2)=\mathbf{e}_2$ and $\partial_t\partial_{\eta_1}^2\phi(\mathbf{0},\mathbf{e}_2)=1$,
	then up to multiplying harmless factors to $\mathscr{T}_\lambda f$ and $f$,
	we can write $\phi$ in this coordinate as
	\beq\label{eq:18}
	\phi(x,t,\eta)=\langle x,\eta\rangle
	+\frac{t}{2}\,
	\eta_1^2/\eta_2
	+\eta_2\,\mathcal{E}(x,t,\eta_1/\eta_2)
	\eeq
	where $\eta=(\eta_1,\eta_2) $ and  $\mathcal{E}(x,t,s)$ obeys
	\beq\label{eq:14}
	\mathcal{E}(x,t,s)\leq C_{\phi}\bigl((|x|+|t|)^2|s|^2+(|x|+|t|)|s|^3\bigr).
	\eeq
	provided ${\rm supp}\; a$ is sufficiently small, otherwise, we may decompose the ${\rm supp} \; a$ into several small pieces such that \eqref{eq:18},\eqref{eq:14} hold.
	For further details in this direction, one may refer to \cite{Bo,Ho, GHI}. We perform to $a(z,\eta)$ the same transforms which convert $\phi$ into  its {\emph normalized} form in \eqref{eq:18}, and we denote it still by $a(z,\eta)$.
	Let the operator  $T_\lambda$  be defined  as follows
	\beq
	T_\lambda f(z,t):=\int e^{i\lambda\phi(z,\eta)}a(z,\eta)f(\eta) \dif \eta.
	\eeq
	As a result, by changing of variable: $z\to \lambda z$, we have
	\beq
	\|\mathscr{T}_\lambda f\|_{L^p(B_R)}\lesssim_{\phi} \lambda^{\frac3p} \|T_\lambda f\|_{L^p(B_{R/\lambda})}.
	\eeq

	Assume $f$ is supported in a plate $\Gamma_\gamma^{\rho^{-1}}$.
	By changing of variable: $\eta_1\rightarrow \eta_1+\gamma \eta_2$, we may rotate the central axis of $\Gamma_\gamma^{\rho^{-1}}$ to the $\mathbf{e}_2$ axis.  We make the associated change of variable in the physical space
	\begin{equation} \label{eq:220}
	\left\{\begin{aligned}
	x_1+\gamma t =x_1^{(1)};\\
	\gamma x_1+x_2+\frac{1}{2}\gamma^2t=x_2^{(1)};\\
	t= t^{(1)}.
	\end{aligned}\right.
	\end{equation}
	For convenience, we will use $x^{(1)}$ to denote $x^{(1)}:=(x_1^{(1)}, x_2^{(1)})$. Since  the transformation above is a diffeomorphism, we may use
	$\Phi^{(1)}(x^{(1)}, t^{(1)})$ to denote the inverse map of \eqref{eq:220}.
	In  the new variable system, the phase  $\phi$ is transformed to
	\beq \label{eq:121}
	\phi^{(1)}(x^{(1)}, t^{(1)}, \eta)=\langle x^{(1)},\eta\rangle+\frac{1}{2}t^{(1)} \eta_1^2/\eta_2+\eta_2\mathcal{E}_1( x^{(1)}, t^{(1)}, \eta_1/\eta_2+\gamma),
	\eeq
	where we denote $ \mathcal{E}_1(x^{(1)}, t^{(1)}, \eta_1/\eta_2+\gamma)=\mathcal{E}( \Phi^{(1)}(x^{(1)}, t^{(1)}), \eta_1/\eta_2+\gamma)$ the error term.
	
	Next make  Taylor's expansion of $\mathcal{E}_1(x^{(1)},t^{(1)},  \eta_1/\eta_2+\gamma )$ as follows
	\begin{align*}
	\mathcal{E}_1(x^{(1)}, t^{(1)} ,  \eta_1/\eta_2+\gamma)&=\mathcal{E}_{1}(x^{(1)},t^{(1)} , \gamma)+\partial_s\mathcal{E}_{1}(x^{(1)},t^{(1)} , \gamma)\frac{\eta_1}{\eta_2}\\
	&+\frac{1}{2}\partial_s^2\mathcal{E}_1(x^{(1)},t^{(1)},  \gamma )\big(\frac{\eta_1}{\eta_2}\big)^2\\
	&+\frac{1}{2}\int_0^1 \partial_s^3\mathcal{E}_{1}(x^{(1)},t^{(1)}, s\frac{\eta_1}{\eta_2}+\gamma)\big(\frac{\eta_1}{\eta_2}\big)^3(1-s)^2 \dif s.
	\end{align*}
	Making change of variables
	\begin{equation}\label{eq:195}
	\left\{\begin{aligned}
	x_1^{(1)}+\partial_{s}\mathcal{E}_1(x^{(1)}, t^{(1)},   \gamma)&=x_1^{(2)}\\
	\mathcal{E}_{1}(x^{(1)},t^{(1)} , \gamma)+x_2^{(1)}&=x_2^{(2)}\\
	\frac{1}{2}t^{(1)}+\frac{1}{2}\partial_{s}^2\mathcal{E}_{1}(x^{(1)}, t^{(1)}, \gamma )&= \frac{1}{2} t^{(2)}.
	\end{aligned}\right.
	\end{equation}
	As above, we use $x^{(2)}$ to denote $x^{2}:=(x_1^{(2)}, x_2^{(2)})$ and  $\Phi^{(2)}(x^{(2)}, t^{(2)})$ to denote the inverse map of \eqref{eq:195}.
	
	Accordingly, the phase function in \eqref{eq:121} is changed to
	\begin{align}\label{eq:19}
	\begin{split}
	\phi^{(2)}(x^{(2)}, t^{(2)}, \eta)&=\langle x^{(2)},\eta\rangle+\frac{1}{2} t^{(2)}\eta_1^2/\eta_2+\eta_2\mathcal{E}_{2}(x^{(2)}, t^{(2)},\eta_1/\eta_2).
	\end{split}
	\end{align}
	where
	\beq \label{eq:171}
	\mathcal{E}_2(x^{(2)}, t^{(2)}, \frac{\eta_1}{\eta_2})=\frac{1}{2}\int_0^1 \partial_s^3\mathcal{E}\Big(\Phi^{(1)}\circ\Phi^{(2)}(x^{(2)}, t^{(2)}), s\frac{\eta_1}{\eta_2}+\gamma\Big)\big(\frac{\eta_1}{\eta_2}\big)^3(1-s)^2 \dif s.
	\eeq
	Define $\Phi=\Phi^{(1)}\circ \Phi^{(2)}$, the amplitude $a$ is changed to
	\beq
	a_2(x^{(2)}, t^{(2)}, \eta)= a(\Phi(x^{(2)}, t^{(2)}), \eta_1+\gamma \eta_2,\eta_2 ).
	\eeq
	Let $\tilde {T}_\lambda $ be defined as  follows
	\begin{align*}
	\tilde {T}_{\lambda}f&=\int e^{i \lambda  \phi^{(2)}(z ,\eta)} a_2 (z,\eta)f(\eta)\dif \eta.
	\end{align*}
	Therefore,
	\beq
	\|T_\lambda f\|_{L^p(B_{R/\lambda})}
	\lesssim
	\|\tilde{T}_\lambda f\|_{L^p(\Phi^{-1}(B_{R/\lambda}))}.
	\eeq

	Let $\{R_{\Lambda}\}_{\Lambda}\subset \R^3 $ be a collection of pairwise disjoint rectangles of sidelength
$\rho^{-1} \frac{R}\lambda\times \rho^{-2} \frac{R}\lambda\times \frac{R}\lambda$ satisfying
	\beq
	 \Phi^{-1}(B_{R/\lambda}) \subset \bigcup_{\Lambda} R_\Lambda.
	\eeq
	By orthogonality of $R_\Lambda$, we have
	\beq
	\|\tilde{ T}_{\lambda}f\|_{L^p(\Phi^{-1}(B_{R/\lambda}))}^p \lesssim \sum_{\Lambda}\|\tilde{ T}_{\lambda}f\|_{L^p(R_\Lambda)}^p.
	\eeq
	Finally,  by scaling $\eta_1\rightarrow \rho^{-1} \eta_1$, the corresponding map for the amplitude  $$\eta  \rightarrow a_2(z, \rho^{-1}\eta_1,\eta_2)$$   which is supported  in
	\begin{align*}
	\mathscr{D}&=\Big\{(\eta_1,\eta_2)\in\R^2: |\eta_1/\eta_2|\leqslant 1/10, |\eta_2-1|\leqslant \varepsilon_0\Big\}.
	\end{align*}
	
		For each $R_\Lambda$, by changing   variables
$$(x,t)\rightarrow\bigl(z_\Lambda+(\lambda^{-1}\rho x_1, \lambda^{-1}x_2,  \lambda^{-1}\rho^2 t )\bigr),$$
with $z_{\Lambda}$ being the center of $R_{\Lambda}$.	we have
	\beq
	\|\tilde{ T}_\lambda f\|_{L^p(R_\Lambda)}
	\lesssim \lambda^{-\frac{3}{p}}\rho^{\frac3p}
	\|\mathscr{\tilde T}_{ \lambda \rho^{-2}}f\|_{L^p(B_{ \rho^{-2} R})},
	\eeq
	where
	\begin{align*}
	&\mathscr{\tilde T}_{\lambda} f(z)=\sum_\nu \tilde{\mathscr{T}}^\nu_\lambda f(z),\;
	\tilde{\mathscr{T}}^\nu_\lambda f(z)=\int e^{i \lambda {\tilde \phi}(\f{z}{\lambda} ,\eta)}\tilde{a}^\nu (z, \eta)f(\eta)\dif \eta,\\
	&\tilde{\phi}(z,\eta)= x_1\eta_1+x_2\eta_2+\frac{1}{2}t\eta_1^2/\eta_2+\rho^2\eta_2
\mathcal{E}_2(\Phi\bigl(z_\Lambda+(\rho^{-1} x_1, \rho^{-2}x_2,  t ), \rho^{-1}\eta_1/\eta_2\big),\\
	&\tilde{a}^\nu (z,\eta)=\tilde a(z,\eta)\chi_\nu(\rho^{-1}\eta_1+\gamma \eta_2,\eta_2),\,
	\tilde{a} (z,\eta)=a_2\bigl(z_\Lambda+(\rho^{-1} x_1, \rho^{-2}x_2, t ), \rho^{-1}\eta_1, \eta_2\big).
	\end{align*}
	
	If the datum $(\tilde \phi, \tilde a)$ is of type ${\bf 1}$, we may apply \eqref{eq:101} directly to obtain
	\begin{align}
	\nonumber
	\|\mathscr{\tilde T}_{\lambda\rho^{-2}}f\|_{L^p(B_{\rho^{-2}R})}\lesssim &  \mathcal{S}_1^{\sigma, \varepsilon}\Big(\frac{\lambda}{\rho^2},
\frac{R}{\rho^2}\Big) \Bigl(\frac{R}{\rho^2}\Bigr)^{\sigma+\varepsilon} \Bigl\|\Big(\sum_{\nu}|\mathscr{\tilde T}_{\lambda\rho^{-2}}^\nu f|^2\Big)^{\frac12}\Bigr\|_{L^p(w_{B_{R\rho^{-2}}})}\\
&+\rho^{-8-4M}R^4 \Bigl(\frac{\lambda}{R}\Bigr)^{-2M}\|f\|_{L^2(\R^2)}.
	\label{eq:196}\end{align}
	Reversing the above transforms  and summing over $\Lambda$'s , we obtain \eqref{eq:2.4}.

Now it remains to show the datum $(\tilde \phi, \tilde a)$ is of type ${\bf 1}$, note that \eqref{eq:19} and \eqref{eq:171}, by replacing the $\tilde \phi$ with $\tilde \phi-\rho^2\eta_2
\mathcal{E}_2\big(\Phi(z_\Lambda, \rho^{-1}\eta_1/\eta_2)\big)$, $\mathbf{ H}_{{\bf 1}})$,  $\mathbf{ D}_{{\bf 1}})$ can be readily verified by choosing $\rho$ sufficiently large depending on $\phi$. It is an issue that the hypothesis $\mathbf{ M}_{\bf 1}$ may break down, however, this can be resolved by decomposing ${\rm supp}_z  a$ into finitely  many small balls and translating to the origin, as a results, one may obtain a number of operators each defined associated with type $\mathbf{1}$ datum. It follows by using \eqref{eq:101} on each of ball, then summing over all the generated balls.
\end{proof}
\begin{remark}
	Throughout the proof of Proposition  \ref{proa}, one may reduce type $\mathbf{A}$ datum to type $\mathbf{1}$ datum.  Hence from now on, we shall always assume $(\phi, a)$ is of type $\mathbf{1}$ without specified clarification.
\end{remark}
\section{Linear v.s.  Multilinear square function estimates }\label{sect:3}
In this section, by modification of the argument in \cite{LeVa12}, we establish the relation between linear and multilinear square function estimates.  In order to describe the  setup, it requires the notion of  transversality.
\begin{definition}
	Let $\mathbf{T}_\lambda=(\mathscr{T}^1_\lambda, \mathscr{T}^2_\lambda, \mathscr{T}^3_\lambda)$ be $3-$tuple of oscillatory integral operators satisfying the
cinematic  curvature  conditions, where $\mathscr{T}_\lambda^j$ has associated phase $\phi^\lambda_j$, amplitude $a_\lambda^j$ and generalised Gauss map  $G_j$ for $1\leqslant j\leqslant 3$. Then $\mathbf{T}_\lambda$ is said to be $\vartriangle$ transverse for some $0< \vartriangle\leqslant 1$ if
	\beq \label{eq:223}
	\Bigl|\bigwedge_{j=1}^3G_j(z,\eta_j)\Bigr|\geqslant \vartriangle\quad \text{for all}\quad (z,\eta_j)\in {\rm supp}\; a_j\, ,\quad \;1\leqslant j\leqslant 3.
	\eeq
\end{definition}

Let $2\leq p< \infty, 1\leq R\leq \lambda^{1-\varepsilon/2}$, ${\bf T}_\lambda=(\mathscr{T}_\lambda^1,\mathscr{T}_\lambda^2,\mathscr{T}_\lambda^3)$ are $\vartriangle$ transverse, we use  $\mathbf{MS}_{\bf 1}^{\sigma, \varepsilon}(\lambda,R)$ to denote the sharp constant such that the following trilinear square function estimate
\beq \label{eq:100}
\Big\|\prod_{j=1}^3 |\mathscr{T}_{\lambda}^jf|^{\frac{1}{3}}\Big\|_{L^{p}(B_R)}\leqslant \mathbf{ MS}_{\bf 1}^{\sigma, \varepsilon}(\lambda,R) R^{\sigma+\varepsilon} \prod_{j=1}^3 \|  \mathscr{T}_{\lambda}^{j}f\|_{L^{p,R}_{\rm  Sq}(B_R)}^{\frac{1}{3}}+R^2 \Big(\frac{\lambda}{R}\Big)^{-3M}\|f\|_{L^2},
\eeq
holds for all type {\bf 1} data $(\phi_j, a_j), j=1,2,3$.

In the seminal paper \cite{BoGu11}, Bourgain and Guth initiated the multilinear approach towards Fourier restriction conjectures and oscillatory integral estimates based on a fundamental result of \cite{BCT06}.
This strategy was later adapted to the cone multiplier problem by Lee and Vargas in \cite{LeVa12}.  Motivated by these previous works, we adopt the similar idea to deduce
linear square function estimates from the multilinear one. The following lemma can be established by using Bourgain-Guth's method and we omit the details (see also pp930-934 in \cite{LeVa12} for details).
\begin{lemma} \label{le1}
	Let $1\ll K_1\ll K_2\ll R$ and  $\{\theta_\mu\}_\mu, \{\theta_\tau\}_\tau$ be two families of plates of aperture $K^{-1}_1$ and $K_2^{-1}$, respectively.  We make decomposition of $\mathscr{T}_{\lambda}f$ as follows
	\beq
	\mathscr{T}_{\lambda}f=\sum_{\mu:\theta_\mu \in \{\theta_\mu \}}\mathscr{T}_{\lambda}^\mu f =\sum_{\tau:\theta_\tau \in \{\theta_\tau \}}\mathscr{T}_{\lambda}^\tau f,
	\eeq
	where \begin{align}
	\mathscr{T}_{\lambda}^\mu f(z):=\int e^{i\phi^\lambda(x,t, \eta)} a_\lambda^{\mu}(x,t,\eta)f(\eta) \dif \eta, \;a_\lambda^\mu(z,\eta)=\sum\limits_{\nu:\,\theta_\nu \subset \theta_\mu} a_\lambda^\nu(z,\eta),\\
	\mathscr{T}_{\lambda}^\tau f(z):=\int e^{i\phi^\lambda(x,t, \eta)} a_\lambda^{\tau}(x,t,\eta)f(\eta) \dif \eta, \;a_\lambda^\tau(z,\eta)=\sum\limits_{\nu:\, \theta_\nu\subset\theta_\tau } a_\lambda^\nu(z,\eta).
	\end{align}
	Then for any $ z \in {\rm supp} \; a_\lambda$, we have
	\begin{equation}
	\label{eq:181}
	\begin{split}
	|\mathscr{T}_{\lambda} f(z)|\leqslant C \max_{\mu} |\mathscr{T}_{\lambda}^\mu f(z)|&+C K_1\max_{\tau}|\mathscr{T}_{\lambda}^\tau f(z)|\\
	&+K_2^{50}\max_{(\tau_1,\tau_2,\tau_3)\in{\rm Tr}(\tau)}\prod_{j=1}^3|\mathscr{T}_{\lambda}^{\tau_j}f(z)|^{1/3},
	\end{split}
	\end{equation}
	where
	${\rm Tr}({\bf \tau}):=\{(\tau_1,\tau_2,\tau_3): \theta_{\tau_j} \in \{\theta_\tau\}, j=1,2,3,\; \text{and} \;\mathsf{ Ang}(\theta_{\tau_j},\theta_{\tau_{j'}})\geq K^{-1}_2, j\neq j'\}$.
\end{lemma}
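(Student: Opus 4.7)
The plan is to establish \eqref{eq:181} as a pointwise bound at each fixed $z$, via a Bourgain--Guth pigeonholing on the fine $\tau$-decomposition. Fix $z \in {\rm supp}\,a_\lambda$, set $M_\tau := \max_\tau |\mathscr{T}_\lambda^\tau f(z)|$, and call a $\tau$-plate \emph{significant} at $z$ if $|\mathscr{T}_\lambda^\tau f(z)| \geq K_2^{-1} M_\tau$. Since there are $\approx K_2$ plates in all, the non-significant contributions sum to at most $M_\tau \leq K_1 \max_\tau |\mathscr{T}_\lambda^\tau f(z)|$, already absorbed into the second term of \eqref{eq:181}. It therefore suffices to control the significant part. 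Partition the significant $\tau$-plates according to which $\mu$-plate contains them, and let $\mathcal{M}$ denote the set of $\mu$-plates containing at least one significant $\tau$-plate.

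If $|\mathcal{M}|\geq 3$, pick a significant $\tau_j$ inside each of three distinct $\mu$-plates $\mu_1, \mu_2, \mu_3$. Any two distinct $\mu$-plates lie at angular distance $\gtrsim K_1^{-1} \gg K_2^{-1}$, so the triple $(\tau_1, \tau_2, \tau_3)$ belongs to ${\rm Tr}(\tau)$. The significance condition together with AM--GM gives
\[
M_\tau \leq K_2 \prod_{j=1}^3 |\mathscr{T}_\lambda^{\tau_j} f(z)|^{1/3},
\]
which, combined with $|\mathscr{T}_\lambda f(z)| \leq \sum_\tau |\mathscr{T}_\lambda^\tau f(z)| \leq K_2 M_\tau$, yields $|\mathscr{T}_\lambda f(z)| \leq K_2^2 \prod_j |\mathscr{T}_\lambda^{\tau_j} f(z)|^{1/3}$, well within the $K_2^{50}$ margin of the trilinear term.

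If instead $|\mathcal{M}| \leq 2$, say $\mathcal{M} \subset \{\mu_1, \mu_2\}$, then
\[
|\mathscr{T}_\lambda f(z)| \leq |\mathscr{T}_\lambda^{\mu_1} f(z)| + |\mathscr{T}_\lambda^{\mu_2} f(z)| + \sum_{\mu \notin \mathcal{M}} |\mathscr{T}_\lambda^\mu f(z)|.
\]
Every $\tau \subset \mu \notin \mathcal{M}$ is non-significant, and each $\mu$-plate contains $\approx K_2/K_1$ many $\tau$-plates, so $|\mathscr{T}_\lambda^\mu f(z)| \leq K_1^{-1} M_\tau$; summing over the $\approx K_1$ many $\mu \notin \mathcal{M}$ yields $M_\tau$. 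Hence $|\mathscr{T}_\lambda f(z)| \lesssim \max_\mu |\mathscr{T}_\lambda^\mu f(z)| + K_1 \max_\tau |\mathscr{T}_\lambda^\tau f(z)|$, supplying the first two terms of \eqref{eq:181}.

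The only delicate point is bookkeeping: the significance threshold must be tuned so that the non-significant tails are swallowed by the $K_1 M_\tau$ term, and the angular gap $\gtrsim K_1^{-1}$ between distinct $\mu$-plates must upgrade to the $K_2^{-1}$-separation required for the selected $\tau$-triple to lie in ${\rm Tr}(\tau)$. Both are automatic from the standing hypothesis $K_1 \ll K_2$, so no deeper geometric input is needed beyond the one-dimensional angular structure.
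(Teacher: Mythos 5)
Your overall strategy — fix $z$, discard non-significant $\tau$-plates, and split according to how the significant plates distribute across $\mu$-plates — is the right Bourgain--Guth two-scale pigeonholing, which is also the approach the paper (which omits the proof, deferring to \cite{LeVa12}) has in mind. However, there is a genuine gap in the broad case: you assert that ``any two distinct $\mu$-plates lie at angular distance $\gtrsim K_1^{-1}$'' and deduce that $\tau_j\subset\mu_j$ are automatically $K_2^{-1}$-separated. This is false when the $\mu$-plates are adjacent. If $\{\theta_\mu\}$ tiles the angular sector, two neighbouring $\mu$-plates have zero angular gap, and $\tau$-plates sitting at the shared boundary can even be adjacent to each other, so the selected triple need not lie in $\mathrm{Tr}(\tau)$. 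Consequently the dichotomy ``$|\mathcal{M}|\geq 3$ versus $|\mathcal{M}|\leq 2$'' is the wrong one: three consecutive $\mu$-plates fall on the wrong side of it.

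The fix is the standard broad--narrow alternative at the $\mu$-scale: either there exist three $\mu$-plates in $\mathcal{M}$ that are pairwise separated by a gap $\geq K_1^{-1}$ (broad case), or $\mathcal{M}$ is covered by at most two angular intervals of length $O(K_1^{-1})$ and hence has cardinality $O(1)$ (narrow case). This alternative is elementary (compare the leftmost and rightmost plates in $\mathcal{M}$). In the broad case the genuine gap $\geq K_1^{-1}\gg K_2^{-1}$ between the three $\mu$-plates forces any $\tau_j\subset\mu_j$ to be pairwise $K_2^{-1}$-separated, so the AM--GM step and the trilinear term go through as you wrote them. In the narrow case you obtain $|\mathscr{T}_\lambda f(z)|\lesssim \max_\mu|\mathscr{T}_\lambda^\mu f(z)|+K_1\max_\tau|\mathscr{T}_\lambda^\tau f(z)|$, with $O(1)$ replacing your constant $2$ but still within the $C$ of \eqref{eq:181}. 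Your closing remark attributing the separation to $K_1\ll K_2$ misses the point: the difficulty is the possibility of adjacency of $\mu$-plates, not the scale gap, and it must be handled by the covering dichotomy rather than by counting $|\mathcal{M}|$.
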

Based on the Lemma \ref{le1}, we relate the trilinear estimate \eqref{eq:100} to \eqref{eq:101}.
\begin{proposition}\label{prob}
	Suppose {\bf MS}$_{\bf 1}^{\sigma, \varepsilon}(\lambda,R)\lesssim_\varepsilon 1$ for all $(\lambda,R)$ satisfying $1\leq R\leq \lambda^{1-\varepsilon/2}$. 
	Then  $${\bf S}_{\bf 1}^{\sigma, 2\varepsilon}(\lambda,R)\lesssim_\varepsilon  1,$$
	whenever $(\lambda,R)$ fulfills the condition $1\leqslant R\leqslant \lambda^{1-\frac{\varepsilon}{2}}$.
\end{proposition}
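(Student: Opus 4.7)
The plan is broad--narrow induction on the scale $R$, in the spirit of Bourgain--Guth \cite{BoGu11} as adapted to cone problems in \cite{LeVa12}. Fix parameters $1\ll K_1\ll K_2$ to be chosen depending only on $\varepsilon$, and let $C_0=C_0(\varepsilon)$ denote the target constant. The base case (for $R\leqslant R_0$ with $R_0$ to be fixed) is furnished by the a priori bound \eqref{eq:175}, and the inductive hypothesis is that $\mathbf{S}_{\bf 1}^{\sigma,2\varepsilon}(\lambda',R')\leqslant C_0$ for all admissible $(\lambda',R')$ with $R'<R$. Applying Lemma \ref{le1} produces the pointwise decomposition
\begin{equation*}
|\mathscr{T}_\lambda f(z)|\leqslant C\max_\mu|\mathscr{T}_\lambda^\mu f(z)|+CK_1\max_\tau|\mathscr{T}_\lambda^\tau f(z)|+CK_2^{50}\max_{(\tau_1,\tau_2,\tau_3)\in{\rm Tr}(\tau)}\prod_{j=1}^3|\mathscr{T}_\lambda^{\tau_j}f(z)|^{1/3}.
\end{equation*}

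For the \emph{narrow} piece, the crude inequality $\|\max_\mu|g_\mu|\|_{L^p}^p\leqslant\sum_\mu\|g_\mu\|_{L^p}^p$ reduces matters to bounding each $\|\mathscr{T}_\lambda^\mu f\|_{L^p(B_R)}$ and summing in $\mu$. Writing $\mathscr{T}_\lambda^\mu f=\mathscr{T}_\lambda f_\mu$ with $f_\mu$ supported on a $K_1^{-1}$-plate, Proposition \ref{proa} applied with $\rho=K_1$ (and with $\varepsilon$-parameter $2\varepsilon$) together with the inductive hypothesis gives
\begin{equation*}
\|\mathscr{T}_\lambda^\mu f\|_{L^p(B_R)}\lesssim C_0(R/K_1^2)^{\sigma+2\varepsilon}\|\mathscr{T}_\lambda^\mu f\|_{L^{p,R}_{\rm Sq}(B_R)}+\text{tail}.
\end{equation*}
Since $p\geqslant 2$, the pointwise inequality $\sum_\mu\bigl(\sum_{\nu\subset\theta_\mu}|\mathscr{T}_\lambda^\nu f|^2\bigr)^{p/2}\leqslant\bigl(\sum_\nu|\mathscr{T}_\lambda^\nu f|^2\bigr)^{p/2}$ reassembles the $\mu$-pieces of the square-function norm back into the full $\|\mathscr{T}_\lambda f\|_{L^{p,R}_{\rm Sq}(B_R)}$ without any $K_1$-loss, so the narrow contribution is at most $CC_0 K_1^{-2(\sigma+2\varepsilon)}R^{\sigma+2\varepsilon}\|\mathscr{T}_\lambda f\|_{L^{p,R}_{\rm Sq}(B_R)}$. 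The \emph{medium} piece is handled identically with $\rho=K_2$, yielding $CC_0 K_1 K_2^{-2(\sigma+2\varepsilon)}R^{\sigma+2\varepsilon}\|\mathscr{T}_\lambda f\|_{L^{p,R}_{\rm Sq}(B_R)}$.

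For the \emph{broad} piece, each admissible triple in ${\rm Tr}(\tau)$ is $K_2^{-O(1)}$-transverse, so the trilinear hypothesis $\mathbf{MS}_{\bf 1}^{\sigma,\varepsilon}(\lambda,R)\lesssim_\varepsilon 1$ applies (with implicit constant polynomial in $K_2$). Combined with AM--GM and the trivial comparison $\|\mathscr{T}_\lambda^{\tau_j}f\|_{L^{p,R}_{\rm Sq}(B_R)}\leqslant\|\mathscr{T}_\lambda f\|_{L^{p,R}_{\rm Sq}(B_R)}$, and summing crudely over the $O(K_2^{3/2})$ triples, this contributes at most $C_\varepsilon K_2^{O(1)}R^{\sigma+\varepsilon}\|\mathscr{T}_\lambda f\|_{L^{p,R}_{\rm Sq}(B_R)}=C_\varepsilon K_2^{O(1)}R^{-\varepsilon}\cdot R^{\sigma+2\varepsilon}\|\mathscr{T}_\lambda f\|_{L^{p,R}_{\rm Sq}(B_R)}$. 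Adding the three contributions, the prefactor of $R^{\sigma+2\varepsilon}\|\mathscr{T}_\lambda f\|_{L^{p,R}_{\rm Sq}(B_R)}$ is
\begin{equation*}
CC_0\bigl(K_1^{-2(\sigma+2\varepsilon)}+K_1 K_2^{-2(\sigma+2\varepsilon)}\bigr)+C_\varepsilon K_2^{O(1)}R^{-\varepsilon}.
\end{equation*}
Since $\sigma+2\varepsilon>0$ throughout the admissible range (as $\sigma(p)\geqslant 0$ for $2<p\leqslant 6$), I first choose $K_1$ so that $CK_1^{-2(\sigma+2\varepsilon)}\leqslant 1/4$, then $K_2$ so that $CK_1 K_2^{-2(\sigma+2\varepsilon)}\leqslant 1/4$; afterwards, I pick $R_0=R_0(\varepsilon,K_2)$ so that the broad-error coefficient is $\leqslant C_0/2$ for $R\geqslant R_0$, and finally $C_0$ large enough that the trivial base-case bound $R_0^{1/4-\sigma}$ is dominated. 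The hardest part of the argument is the $\mu$-sum bookkeeping in the narrow step: one must pass the rescaled square-function norms through the $\mu$-sum without accruing any $K_1$-loss, which is exactly where the pointwise $\ell^2\hookrightarrow\ell^p$ embedding (requiring $p\geqslant 2$) is crucial; the polynomial $K_2$-dependence hidden in the trilinear constant is comparatively harmless since it is absorbed by $R^{-\varepsilon}$ once $R\geqslant R_0$.
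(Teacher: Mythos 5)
Your overall architecture — the Bourgain--Guth decomposition from Lemma \ref{le1}, the parabolic rescaling via Proposition \ref{proa} with $\rho=K_1,K_2$ for the narrow and medium pieces, the $\ell^2\hookrightarrow\ell^p$ reassembly of the $\mu$-pieces into the full square-function norm, and the closing bookkeeping (choose $K_1$, then $K_2$, then absorb via $R^{-\varepsilon}$) — agrees with the paper's proof. The narrow and medium pieces are handled essentially identically, and your observation that the $\ell^2\hookrightarrow\ell^p$ embedding reassembles the plate-localized square functions with no $K_1$-loss is exactly the mechanism the paper uses in \eqref{eq:226}--\eqref{eq:227}.

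There is, however, a genuine gap in your treatment of the broad piece. You apply the hypothesis $\mathbf{MS}_{\bf 1}^{\sigma,\varepsilon}(\lambda,R)\lesssim_\varepsilon 1$ directly to the triples $(\tau_1,\tau_2,\tau_3)\in{\rm Tr}(\tau)$, claiming the implicit constant is ``polynomial in $K_2$'' because these triples are $K_2^{-O(1)}$-transverse. But the definition of $\mathbf{MS}_{\bf 1}^{\sigma,\varepsilon}$ around \eqref{eq:100} is the sharp constant for data satisfying the $\vartriangle$-transversality condition \eqref{eq:223} at a \emph{fixed} level $\vartriangle$; the hypothesis does not supply, and the statement of Proposition \ref{prob} does not presuppose, any quantitative control on how this constant degrades as $\vartriangle\to 0$. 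Nor are the angularly-decomposed pieces $\mathscr{T}_\lambda^{\tau_j}$ immediately associated with type {\bf 1} data in the sense of Section~2: the sectors $\theta_{\tau_j}$ have aperture $K_2^{-1}$ and are centered away from $\mathbf{e}_2$, so the normalized form $(\mathbf{H}_{\bf 1})$ fails as written. The paper closes both gaps with a further parabolic rescaling of the broad term: a translation $\eta_1\to\eta_1+\alpha_2\eta_2$ (with matching affine change in $(x,t)$, \eqref{eq:224}), a Taylor reduction of the error \eqref{eq:211}, and a dilation $\eta_1\to\alpha\eta_1$ with $\alpha=\min\{\alpha_1-\alpha_2,\alpha_2-\alpha_3\}\geq K_2^{-1}$, after which the three sectors are $O(1)$-separated, the transversality is bounded below by a fixed $c>0$, and the data is of type {\bf 1}. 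The trilinear hypothesis is then invoked at the rescaled parameters $(\lambda\alpha^2,R\alpha^2)$, and the $K_2^{O(1)}$ loss you want is precisely what emerges from summing over the rectangles $R_\Lambda$ and undoing the transforms. Unless you prove a uniform $\vartriangle$-dependence for the trilinear constant (which would require re-deriving the Section~4 argument with explicit transversality tracking, not just citing the BCT theorem), the rescaling step cannot be skipped.
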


\begin{proof}
	If $1\leq R \leq 100$, then  by \eqref{eq:175}, we have ${\bf S}_{\bf 1}^{\sigma,2\varepsilon}(\lambda,R)\leq C$. We proceed by induction argument. Suppose
	\beq \label{eq:228}
	{\bf S}_{\bf 1}^{\sigma, 2\varepsilon}(\lambda',R')\lesssim_\varepsilon 1 \; \text{for all}\; R'\leq R/2,\;   R'\leq (\lambda')^{1-\varepsilon/2},
	\eeq
	it suffices to show
	\beq
	{\bf S}_{\bf 1}^{\sigma, 2\varepsilon}(\lambda,R)\lesssim_\varepsilon 1 \; \text{for all}\;  R\leq \lambda^{1-\varepsilon/2}.
	\eeq
	To this end, using  Lemma \ref{le1}, we have
	\begin{align}
	\|\mathscr{T}_{\lambda} f\|_{L^p (B_R)}^p \leq  &C \sum_{\mu:\theta_\mu \in \{\theta_\mu \}}\|\mathscr{T}_{\lambda}^\mu f\|_{L^p(B_R)}^p+ C K_1^p \sum_{\tau: \theta_\tau  \in \{\theta_\tau \}}\|\mathscr{T}_{\lambda}^\tau f\|_{L^p(B_R)}^p \label{eq:109}\\ &+K_2^{50p}\sum_{(\tau_1,\tau_2,\tau_3)\in{\rm Tr}(\tau)}\|\prod_{j=1}^3|\mathscr{T}_{\lambda}^{\tau_j}f|^{1/3}\|_{L^p(B_R)}^p.\label{eq:112}
	\end{align}
	By Proposition \ref{proa}, we have
	\begin{align}
	\|\mathscr{T}_{\lambda}^\mu f\|_{L^p (B_R )}\leq &\,C\, {\rm S}_{{\bf 1}}^{\sigma, 2\varepsilon}
\Big(\frac{\lambda}{K_1^2}, \frac{R}{K_1^2}\Big)\Big(\frac{R}{K_1^2}\Big)^{\sigma+2\varepsilon}  \Big\|\Big(\sum_{\nu: \theta_\nu \subset \theta_\mu}|\mathscr{T}_{\lambda}^{\nu}f|^2\Big)^{\frac12}\Big\|_{L^p (w_{B_R})}\nonumber\\
	&+ K_1^{-2}R^4 (\frac{\lambda}{R})^{-2M}\|f\|_{L^2}.
	\end{align}
	Raising to the $p$th power and summing over all $\mu$'s, we get
	\begin{align}
	\Big(\sum_{\mu:\theta_{\mu}\in\{\theta_{\mu}\}}\Big\|\mathscr{T}_{\lambda}^\mu f\Big\|_{L^p (B_R)}^p\Big)^{\frac1p}
	\leq& C  S_{{\bf 1}}^{\sigma, 2\varepsilon}\Bigl(\frac{\lambda}{K_1^2},\frac{R}{K_1^2}\Bigr)\Big(\frac{R}{K_1^2}\Big)^{\sigma+2\varepsilon}
\Big\|\Big(\sum_{\nu }|\mathscr{T}_{\lambda}^{\nu}f|^2\Big)^{\frac12}\Big\|_{L^p (w_{B_{R}})}\nonumber
	\\
	& + K_1^{-1}R^4 \Big(\frac{\lambda}{R}\Big)^{-2M}\|f\|_{L^2}\label{eq:226}.
	\end{align}
	Similarly, we obtain the  corresponding estimate for the second term in \eqref{eq:109}.
	\begin{align}
	\Big(\sum_{\tau:\theta_{\tau}\in\{\theta_{\tau}\}} \Big\|\mathscr{T}_{\lambda}^\tau f\Big\|_{L^p (B_R )}^p\Big)^{\frac1p}\leq &CK_1{\rm S}_{{\bf 1}}^{\sigma, 2\varepsilon}\Bigl(\frac{\lambda}{K_2^2},\frac{R}{K_2^2}\Bigr)\Bigl(\frac{R}{K_2^2}\Bigr)^{\sigma+2\varepsilon}  \Big\|\Big(\sum_{\nu }|\mathscr{T}_{\lambda}^{\nu}f|^2\Big)^{\frac12}\Big\|_{L^p (w_{B_\lambda})}\nonumber\\
	&+K_1K_2^{-1}R^4 \Big(\frac{\lambda}{R}\Big)^{-2M}\|f\|_{L^2}.\label{eq:227}
	\end{align}

	It remains to estimate \eqref{eq:112}.
	In order to use the trilinear square function estimate \eqref{eq:100} with ${\rm  MS}_{\bf 1}^{\sigma,\varepsilon}(\lambda,R)\lesssim 1$, it requires the verification of the transversality condition.
	
	Let $(\tau_1^0,\tau_2^0,\tau_3^0)\in {\rm Tr}(\tau)$ be such that
	\[
	\Bigl\|\prod_{j=1}^3|\mathscr{T}_{\lambda}^{\tau^0_j}f|^{\frac13}\Bigr\|_{L^p(B_R)}^p
	=\max_{(\tau_1,\tau_2,\tau_3)\in{\rm Tr}(\tau)}\Bigl\|\prod_{j=1}^3|\mathscr{T}_{\lambda}^{\tau_j}f|^{\frac13}\Bigr\|_{L^p(B_R)}^p,
	\]
	Then \eqref{eq:112}
	is bounded by
	\beq\label{eq:ooo}
	K^{O(1)}_2\Bigl\|\prod_{j=1}^3|\mathscr{T}_{\lambda}^{\tau^0_j}f|^{\frac13}\Bigr\|_{L^p(B_R)}^p.
	\eeq
	since the cardinality of the set
	${\rm Tr}({\bf \tau})$
	is at most $K^{3}_2$.
	By change of variable $z\to \lambda z$, \eqref{eq:ooo} becomes
	\beq \label{eq:200}
	\lambda^{3}K^{O(1)}_2\bigl\|\prod_{j=1}^3|T_{\lambda}^{\tau^0_j}f|^{1/3}\bigr\|_{L^p(B_{R/\lambda})}^p
	\eeq
	where for $j=1,2,3$,
	$$T_{\lambda}^{\tau^0_j}f(x,t)=\int e^{i\lambda\phi(x,t,\eta)}a^{\tau_j^0}(x,t,\eta)f(\eta)\dif \eta,\;
	a^{\tau_j^0}(x,t,\eta)=a_\lambda^{\tau_j^0}(\lambda (x,t),\eta).$$

	Let $\eta^j=(\eta_1^j,\eta_2^j)$ be the center of $\theta_{\tau^0_j}$ with $j=1,2,3$ and set
	$\alpha_j=\eta_1^j/\eta^j_2$.
	Then, for every  $\eta \in \theta_{\tau^0_j}$,  we have $$\left|\frac{\eta_1}{\eta_2}-\alpha_j \right|\leq K_2^{-1},\;j=1,2,3.$$  Due to the  angular separation condition involved in ${\rm Tr}(\tau)$, we may assume $\alpha_1>\alpha_2>\alpha_3$ after a rearrangement if necessary. Denote
	\beq \label{eq:111}
	\alpha:=\min\{\alpha_1-\alpha_2, \alpha_2-\alpha_3\}.
	\eeq
	Then we have  $\alpha\geq K^{-1}_2$.
	
	In order to verify the transversality condition, we need to make another transform with respect to  $\eta_1$ to ensure the angular separation $\approx 1$ independent of $\varepsilon_0$.
	To this end,  one may modify the argument in the proof of Proposition \ref{proa}.

	We make a change of variable  $\eta_1\rightarrow \eta_1+\alpha_2 \eta_2$ in each oscillatory integral $T_{\lambda}^{\tau^0_j}f$ with respect to the frequency space for all $j=1,2,3$.
	In the mean time, we make an affine transform in \eqref{eq:200} with respect to the $(x,t)-$variables
	\begin{equation}\label{eq:224}
	\left\{\begin{aligned}
	x_1+\alpha_2 t =x_1^{(1)}\\
	\alpha_2 x_1+x_2+\frac{1}{2}\alpha_2^2t=x_2^{(1)}\\
	t= t^{(1)}
	\end{aligned}\right.
	\end{equation}

	We denote $x^{(1)}:=(x_1^{(1)}, x_2^{(1)})$ and use
	$\Phi^{(1)}(x^{(1)}, t^{(1)})$ to denote the inverse map of \eqref{eq:224}.
	In the new variable system, the phase function $\phi$ is transformed to
	\beq \label{eq:121-add}
	\phi^{(1)}(x^{(1)}, t^{(1)}, \eta)=\langle x^{(1)},\eta\rangle+\frac{1}{2}t^{(1)} \eta_1^2/\eta_2+\eta_2\mathcal{E}_1( x^{(1)}, t^{(1)}, \eta_1/\eta_2+\alpha_2),
	\eeq
	where $$ \mathcal{E}_1(x^{(1)}, t^{(1)}, \eta_1/\eta_2+\alpha_2)=\mathcal{E}( \Phi^{(1)}(x^{(1)}, t^{(1)}), \eta_1/\eta_2+\alpha_2).$$
	Taylor expanding  $\mathcal{E}_1(x^{(1)},t^{(1)},  \eta_1/\eta_2+\alpha_2 )$ as follows
	\begin{align*}
	\mathcal{E}_1(x^{(1)}, t^{(1)} ,  \eta_1/\eta_2+\alpha_2)=&\mathcal{E}_{1}(x^{(1)},t^{(1)} , \alpha_2)+\partial_s\mathcal{E}_{1}(x^{(1)},t^{(1)} , \alpha_2)\frac{\eta_1}{\eta_2}\\
	&+\frac{1}{2}\partial_s^2\mathcal{E}_1\Bigl(x^{(1)},t^{(1)},  \alpha_2 \Bigr)\big(\frac{\eta_1}{\eta_2}\big)^2\\
	&+\frac{1}{2}\int_0^1 \partial_s^3\mathcal{E}_{1}
	\Bigl(x^{(1)},t^{(1)}, s\frac{\eta_1}{\eta_2}+\alpha_2\Bigr)\big(\frac{\eta_1}{\eta_2}\big)^3(1-s)^2 \dif s.
	\end{align*}
	
	Making an additional change of variables
	\begin{equation}\label{eq:211}
	\left\{\begin{aligned}
	x_1^{(1)}+\partial_{s}\mathcal{E}_1(x^{(1)}, t^{(1)},   \alpha_2)&=x_1^{(2)},\\
	\mathcal{E}_{1}(x^{(1)},t^{(1)} , \alpha_2)+x_2^{(1)}&=x_2^{(2)},\\
	\frac{1}{2}t^{(1)}+\frac{1}{2}\partial_{s}^2\mathcal{E}_{1}(x^{(1)}, t^{(1)}, \alpha_2 )&= \frac{1}{2} t^{(2)},
	\end{aligned} \right.
	\end{equation}
	the phase function in \eqref{eq:121-add} is changed to
	\begin{align}\label{eq:19add}
	\begin{split}
	\phi^{(2)}(x^{(2)}, t^{(2)}, \eta)&=\langle x^{(2)},\eta\rangle+\frac{1}{2} t^{(2)}\eta_1^2/\eta_2+\eta_2\mathcal{E}_{2}(x^{(2)}, t^{(2)},\eta_1/\eta_2),
	\end{split}
	\end{align}
	where
	\beq \label{eq:171add}
	\mathcal{E}_2\Bigl(x^{(2)}, t^{(2)}, \frac{\eta_1}{\eta_2}\Bigr)=\frac{1}{2}\int_0^1 \partial_s^3 \mathcal{E}\Bigl(\Phi^{(1)}\circ\Phi^{(2)}(x^{(2)}, t^{(2)}), s\frac{\eta_1}{\eta_2}+\alpha_2\Bigr)\big(\frac{\eta_1}{\eta_2}\big)^3(1-s)^2 \dif s,
	\eeq
	with $x^{(2)}:=(x_1^{(2)}, x_2^{(2)})$
	and  $\Phi^{(2)}(x^{(2)}, t^{(2)})$  denoting the inverse map of \eqref{eq:211}.

	Setting $\Phi=\Phi^{(1)}\circ \Phi^{(2)}$, the amplitude $a^{\tau_j^0}$ is changed by the above affine transforms  to
	\beq
	a^{\tau_j^0}(\Phi(x^{(2)}, t^{(2)}), \eta_1+\alpha_2 \eta_2,\eta_2 ):= a_2^{\tau_j^0}(x^{(2)}, t^{(2)}, \eta).
	\eeq
	We introduce an operator $\widetilde{ T}_\lambda^{\tau_j^0}$
	\begin{align*}
	\widetilde {T}_{\lambda}^{\tau_j^0}f(x^{(2)},t^{(2)})&=\int e^{i \lambda  \phi^{(2)}(x^{(2)},t^{(2)} ,\eta)} a_2^{\tau_j^0} (x^{(2)},t^{(2)},\eta)f(\eta)\dif \eta.
	\end{align*}
	Therefore,
	\beq
	\Bigl\|\prod_{j=1}^3|T_{\lambda}^{\tau^0_j}f|^{\frac13}\Bigr\|_{L^p(B_{R/\lambda})}^p
	\lesssim
	\Bigl \|\prod_{j=1}^3 |\widetilde{T}_\lambda^{\tau_j^0}  f|^{\frac13}\Bigr\|_{L^p(\Phi^{-1}(B_{R/\lambda}))}.
	\eeq

	Let $\{R_{\Lambda}\}_{\Lambda}\subset \R^3 $ be a pairwise disjoint collection of rectangles of sidelength $\alpha\frac{R}{\lambda} \times \alpha^2\frac{R}{\lambda}\times\frac{R}{\lambda}$, where the first two components correspond to the $x$-direction, such that
	\beq
	\Phi^{-1}(B_{R/\lambda}) \subset \bigcup_{\Lambda} R_\Lambda,
	\eeq
	By almost orthogonality, we have
	\beq
	\Bigl\|\prod_{j=1}^3|\widetilde{ T}_{\lambda}^{\tau_j^0}f|^{\frac{1}{3}}\Bigr\|_{L^p(\Phi^{-1}(B_{R/\lambda}))}^p \leq
\sum_{\Lambda}\Bigl\|\prod_{j=1}^3|\tilde{ T}_{\lambda}^{\tau_j^0} f|^{\frac{1}{3}}\Bigr\|_{L^p(R_\Lambda)}^p.
	\eeq
	
	Finally, under scaling $\eta_{1}\rightarrow \alpha \eta_{1}$,  the corresponding amplitude function becomes
	$$(\eta_1,\eta_2)  \rightarrow a_2^{\tau_j^0}\bigl(x^{(2)},t^{(2)}, (\alpha\eta_1,\eta_2)\bigr)$$
	which is supported respectively in $\mathscr{D}_j$ with
	\begin{align*}
	\mathscr{D}_1&=\{(\eta_1,\eta_2)\in\R^2: |\eta_1/\eta_2-\beta_1|\leqslant 1/10, |\eta_2-1|\leqslant \varepsilon_0\},\\
	\mathscr{D}_2&=\{(\eta_1,\eta_2)\in\R^2: |\eta_1/\eta_2|\leqslant 1/10, |\eta_2-1|\leqslant \varepsilon_0\},\\
	\mathscr{D}_3&=\{(\eta_1,\eta_2)\in\R^2: |\eta_1/\eta_2-\beta_3|\leqslant 1/10, |\eta_2-1|\leqslant \varepsilon_0\},
	\end{align*}
	where  $\beta_k=(\alpha_k-\alpha_2)\alpha^{-1}$ for $k=1,3$.
	From \eqref{eq:111},  it is easy to see $|\beta_k|\geq 1/2$ for $k=1,3$.
	\vskip0.2cm

	By changing   variables $(x^{(2)},t^{(2)})\rightarrow z_\Lambda+\bigl(\lambda^{-1}\alpha^{-1} x_1, \lambda^{-1}x_2,  \lambda^{-1}\alpha^{-2} t\bigr) $ with $z_\Lambda$ being the center of $R_\Lambda$,
	we have
	\beq
	\Bigl\|\prod_{j=1}^3|\tilde{ T}_{\lambda}^{\tau_j^0} f|^{\frac{1}{3}}\Bigr\|_{L^p(R_\Lambda)}\leqslant C\lambda^{-\frac3p}\alpha^{-\frac3p}
	\Big\|\prod_{j=1}^3|\mathscr{\tilde T}^{\tau_j^0}_{ \lambda \alpha^2}f|^{\frac{1}{3}}\Big\|_{L^p(B_{ \alpha^2 R})},
	\eeq
	where $B_{\alpha^2 R}$ is a ball of radius $\alpha^2R$ centered at the origin and
	\begin{align*}
&\mathscr{\tilde T}_{\lambda}^{\tau_j^0} f(z)=\sum_{\nu_j} \tilde{\mathscr{T}}^{\nu_j}_\lambda f(z),\;
	\tilde{\mathscr{T}}^{\nu_j}_\lambda f(z)=\int e^{i \lambda {\tilde \phi}(\f{z}{\lambda} ,\eta)}\tilde{a}^{\nu_j }(z, \eta)f(\eta)\dif \eta,\\
	&\tilde{\phi}(x,t,\eta)= x_1\eta_1+x_2\eta_2+\frac{1}{2}t\eta_1^2/\eta_2+\alpha^{-2}\eta_2\mathcal{E}_2(\Phi(z_\Lambda+(\alpha x_1, \alpha^2x_2,   t), \alpha\eta_1/\eta_2),\\
	&\tilde{a}^{\tau_j^0} (x,t,\eta)=a_2^{\tau_j^0}\bigl(z_\Lambda+(\alpha x_1, \alpha^2x_2,  t), (\alpha\eta_1,\eta_2)\bigr).
	\end{align*}
\vskip0.15cm
	\noindent  We have by direct calculation for $\varepsilon_0$ small enough
	\begin{align*}
	\begin{split}
	\partial_{x} \tilde \phi(x,t,\eta)&=\eta+O(\varepsilon_0|\eta|^2)+O(\varepsilon_0|\eta|^3),\\
	\partial_{t} \tilde \phi(x,t,\eta)&=\frac{1}{2}\eta_1^2/\eta_2+O(\varepsilon_0|\eta|^2)+O(\varepsilon_0|\eta|^3).
	\end{split}
	\end{align*}
	Therefore, it is easy to obtain the formula for  $G_0^j, j=1,2,3$
	\beq
	G^j_0(x,t,\eta)=\Bigl(-\frac{\eta_1}{\eta_2}, \frac{1}{2}\frac{\eta_1^2}{\eta_2^2}, 1\Bigr)+O(\varepsilon_0),\; \eta\in\mathscr{D}_j.
	\eeq
	From this, we obtain
	\beq
	|G^1_0\wedge G^2_0  \wedge G^{3}_0|\geqslant \frac{1}{8}|\beta_1||\beta_3||\beta_1-\beta_3|+O(\varepsilon_0)\geq \frac{1}{32}|\beta_1-\beta_3|^2 +O(\varepsilon_0),
	\eeq
	for all $(x,t,\eta)\in B(0,\varepsilon_0)\times \mathscr{D}_j \; j=1,2,3.$
	Since  $G^j:= \frac{G^j_0(z,\eta)}{|G^j_0(z,\eta)|}$, we have
	\beq
	|G^1\wedge G^2  \wedge G^3|\approx |\beta_1-\beta_3|^{-2}|G^1_0\wedge G^2_0  \wedge G^{3}_0| \geq c>0.
	\eeq  	
	then the transversality condition is guaranteed.  Following the approach in the proof of Proposition \ref{proa}, one may verify that the corresponding phase  and amplitude  $(\tilde \phi, \tilde a)$ is of type {\bf 1} datum.
	
	By our assumption, we have ${\rm MS}_{{\bf 1}}^{\sigma, \varepsilon}(\lambda\alpha^2, R\alpha^2)\lesssim_\varepsilon 1$.
	This yields
	\begin{align*}
	\Big\|\prod_{j=1}^3|\mathscr{\tilde T}^{\tau_j^0}_{ \lambda \alpha^2}f|^{\frac{1}{3}}\Big\|_{L^p(B_{ \alpha^2 R})}
	\leq& C_\varepsilon K_2^{O(1)}R^{\sigma+\varepsilon} \prod_{j=1}^3 \Big\| \Big(\sum_{\nu_j} |\tilde{\mathscr{T}}_{\lambda\alpha^2}^{\nu_j}f|^2\Big)^{\frac12}\Big\|_{L^{p}(w_{B_{\alpha^2 R}})}^{\f{1}{3}}\\
	&+K^{O(1)}_2 R^2 \Big(\frac{\lambda}{R}\Big)^{-3M}\|f\|_{L^2(\R^2)}.
   \end{align*}
		Undoing the transform and summing over $\Lambda$, we arrive at
	
	\begin{align}\nonumber
		\Bigl\|\prod_{j=1}^3|\mathscr{T}_{\lambda}^{\tau^0_j}f|^{\frac13}\Bigr\|_{L^p(B_R)}^p\leq&
 C_\varepsilon K_2^{O(1)}R^{(\sigma+\varepsilon)p}  \Big\| \Big(\sum_{\nu} |\mathscr{T}_{\lambda}^{\nu}f|^2\Big)^{\frac12}\Big\|_{L^{p}(w_{B_{ R}})}^p\\
		& +K^{O(1)}_2 R^{2p} \Big(\frac{\lambda}{R}\Big)^{-3Mp}\|f\|_{L^2(\R^2)}^p.
	\label{eq:225}\end{align}
		Collecting  \eqref{eq:109}\eqref{eq:112}\eqref{eq:226} \eqref{eq:227} \eqref{eq:225}, we obtain
	\begin{align*}
	\|\mathscr{T}_{\lambda} f\|_{L^p (B_R)}\leq&  \Bigl (C ({\rm S}_{{\bf 1}}^{\sigma, 2\varepsilon}(\lambda/K_1^2,R/ K_1^2)(1/K_1^2)^{\sigma+2\varepsilon}+C_\varepsilon K_2^{O(1)}R^{-\varepsilon}\\
	&+ C K_1^{O(1)}{\rm S}_{{\bf 1}}^{\sigma, 2\varepsilon}(\lambda/ K_2^2,R/ K_2^2)(1/K_2^2)^{\sigma+2\varepsilon} \Bigr)R^{\sigma+2\varepsilon}
\Bigl\| \Big(\sum_{\nu} |\mathscr{T}_{\lambda}^{\nu}f|^2\Big)^{\frac12}\Bigr\|_{L^{p}(w_{B_R})}\\
	&+(K_1^{-1}+K_1K_2^{-1} + K_2^{O(1)} R^{-M}R^{-2})R^4\Big(\frac{\lambda}{R}\Big)^{-2M}\|f\|_{L^2}.
	\end{align*}
	Choose $1\ll K_1$ such that $R/K_1^2\leq R/2$. Under the assumption  $R\leq \lambda^{1-\varepsilon/2}$, we have $$R/K_1^2 \leq (\lambda/K_1^2)^{1-\varepsilon/2}.$$
	By assumption \eqref{eq:228},  ${\rm S}_{{\bf 1}}^{\sigma, 2\varepsilon}(\lambda/K_1^2,R/ K_1^2)\leq C_\varepsilon $.  In view of $ 1\ll K_1\ll K_2\ll R$, we have
	\begin{gather}
	C \mathbf{S}_{{\bf 1}}^{\sigma, 2\varepsilon}\Bigl(\frac{\lambda}{K_1^2},\frac{R}{K_1^2}\Bigr)\Bigl(\frac{1}{K_1^2}\Bigr)^{\sigma+2\varepsilon}+ C K_1^{O(1)}\mathbf{S}_{{\bf 1}}^{\sigma, 2\varepsilon}\Bigl(\frac{\lambda}{K_2^2},\frac{R}{K_2^2}\Bigr)\Bigl(\frac{1}{K_2^2}\Bigr)^{\sigma+2\varepsilon}+C_\varepsilon K_2^{O(1)}R^{-\varepsilon} \leq C_\varepsilon,\notag\\
	K_1^{-1}+K_1K_2^{-1} + K_2^{O(1)} R^{-M}R^{-2}\leq 1.\notag
	\end{gather}
	Finally, we obtain
	\begin{align*}
	\|\mathscr{T}_{\lambda} f\|_{L^p (B_R)}\leqslant&  C_\varepsilon\Bigl\| \Big(\sum_{\nu} |\mathscr{T}_{\lambda}^{\nu}f|^2\Big)^{\frac12}\Bigr\|_{L^{p}(w_{B_R})}+R^4\Big(\frac{\lambda}{R}\Big)^{-2M}\|f\|_{L^2},
	\end{align*}
	which completes the proof.
\end{proof}

\section{$L^3-$variable coefficient square function estimate via multilinear approach}\label{sect:4}
In this section, we prove  for $1\leqslant R\leqslant \lambda^{1-\varepsilon/2}$
\beq \label{eq:110}
\Big\|\prod_{j=1}^3 |\mathscr{T}_{\lambda}^j  f|^{\frac{1}{3}}\Big\|_{L^{3}(B_R)}\leqslant C_\varepsilon R^\varepsilon  \prod_{j=1}^3 \Big\| \Big(\sum_{\nu_j} |\mathscr{T}_{\lambda}^{\nu_j}f|^2\Big)^{1/2}\Big\|_{L^{3}(w_{B_{R}})}+\lambda^{-4M}\|f\|_{L^2(\R^2)},
\eeq
which  implies ${\rm MS}_{\bf 1}^{0,\varepsilon}(\lambda, R)\lesssim_\varepsilon 1$.

The main ingredient of the proof is the Bennett-Carbery-Tao's  multilinear oscillatory integral estimates below.

\begin{theorem}[\cite{BCT06}]\label{theo2}
	Under the transversality condition  described as above, then for each $\varepsilon>0$ the datum $(\phi, a)$ is of type ${\bf A}$, then  there is a constant $C>0$, depending only on $\varepsilon,\vartriangle, A_2 $, for which
	\beq \label{eq:6}
	\Big\|\prod_{j=1}^3 |\mathscr{T}_{\lambda}^j f_j|^{\frac13}\Big\|_{L^3(B_R)}\lesssim_{\varepsilon, \vartriangle, A_2}  R^{\varepsilon}\prod_{j=1}^3 \|f_j\|_{L^2(\R^{2})}^{\frac{1}{3}}.
	\eeq
\end{theorem}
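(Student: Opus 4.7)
The plan is to deduce the trilinear oscillatory integral inequality from the multilinear Kakeya theorem through a wave packet decomposition and an induction on the scale $R$, following the Bennett--Carbery--Tao / Bourgain--Guth paradigm. For each $j=1,2,3$ I partition $\supp_\eta a_j$ into caps $\theta$ of radius $R^{-1/2}$, and $B_R$ into an $R^{1/2}$-spaced lattice of centres $v$. Each pair $(\theta,v)$ produces a curved tube $T_{\theta,v}^{(j)}$ in $B_R$ of dimensions $\asymp R^{1/2}\times R^{1/2}\times R$ whose long axis points in the direction $G_j(z_0,\eta_\theta)$, where $z_0$ is the centre of $B_R$ and $\eta_\theta$ the centre of $\theta$. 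Standard stationary phase analysis yields a wave packet expansion
\beq
\mathscr{T}_{\lambda}^j f_j = \sum_{T\in\mathbb{T}_j} c_T^{(j)}\,\psi_T^{(j)} + \mathrm{RapDec}(\lambda)\|f_j\|_{L^2},
\eeq
with each $\psi_T^{(j)}$ $L^\infty$-normalised and essentially supported on $T$, and $\sum_T |c_T^{(j)}|^2 \lesssim \|f_j\|_{L^2}^2$ by almost orthogonality. By the transversality hypothesis \eqref{eq:223} the three tube families $\mathbb{T}_1,\mathbb{T}_2,\mathbb{T}_3$ are $\vartriangle$-transverse.

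Let $Q(R)$ denote the best constant in \eqref{eq:6}; the base case $R\lesssim 1$ follows from H\"older together with the trivial $L^2\to L^2$ bound. For the inductive step I tile $B_R$ by balls $B$ of radius $R^{1-\delta}$ for a small $\delta>0$ and apply the inductive hypothesis on each $B$. Grouping wave packets according to whether they cross $B$, Cauchy--Schwarz bounds the trilinear $L^3$ integral over $B$ by products of the form $\prod_j\bigl(\sum_{T\in\mathbb{T}_j(B)}|c_T^{(j)}|^2\bigr)^{1/2}$ weighted by tube characteristic functions. Summing over $B$ with the multiplicity identity $\sum_B \chi_{\mathbb{T}_j(B)}(T)\asymp R^{\delta/2}$ reduces matters to a trilinear geometric inequality for indicator functions of $R^{1-\delta}$-scaled tubes.

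The decisive ingredient is the multilinear Kakeya inequality (of Bennett--Carbery--Tao, sharpened by Guth): for three $\vartriangle$-transverse families of straight tubes $\mathbb{T}_j$ in $\R^3$ of dimensions $R^{1/2}\times R^{1/2}\times R$,
\beq
\Big\|\prod_{j=1}^3\Big(\sum_{T\in\mathbb{T}_j}\chi_T\Big)^{1/3}\Big\|_{L^3(B_R)} \lesssim_{\vartriangle,\varepsilon}R^\varepsilon \prod_{j=1}^3 \bigl(|\mathbb{T}_j|\,R^{1/2}\bigr)^{1/3}.
\eeq
Combining this with the $\ell^2$ bound $\sum_T |c_T^{(j)}|^2\lesssim \|f_j\|_{L^2}^2$ yields a recursion of the form $Q(R)\leq C_{\varepsilon,\vartriangle,A_2} R^{\delta\varepsilon/2} + Q(R^{1-\delta})$, which iterates to $Q(R)\lesssim_\varepsilon R^\varepsilon$.

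The main obstacle is that for a variable-coefficient phase $\phi_j^\lambda$ the wave packets are only approximately straight, and the Gauss map $G_j(z,\eta)$ varies with the base point $z$. The type-${\bf A}$ hypotheses, in particular the derivative bound $(\mathbf{D}_{{\bf A}})$ and the structural form $(\mathbf{H}_{{\bf A}})$, are exactly what controls both effects: over any ball of radius $R\leq\lambda^{1-\varepsilon/2}$ the deviation of each tube from its osculating straight line is $O(R^2/\lambda)\ll R^{1/2}$ in the transverse direction, and the oscillation of $G_j$ over $\supp a_j$ is $O(R/\lambda)$. After partitioning $B_R$ into sub-balls on which these variations become negligible and applying the parabolic rescaling machinery of Proposition \ref{proa} to return to a type-${\bf 1}$ datum, the reduction to straight-tube multilinear Kakeya is legitimate and the loss is absorbed into the $R^\varepsilon$ factor. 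Propagating this bookkeeping uniformly through the induction, so that the constants $\vartriangle$ and $A_2$ do not deteriorate from one scale to the next, is the technical heart of the argument.
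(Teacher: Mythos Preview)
The paper does not supply a proof of this theorem: it is quoted verbatim from Bennett--Carbery--Tao \cite{BCT06} and used as a black box (the only ``proof'' in the paper is the citation itself). Your proposal is therefore not comparable to anything the authors wrote; rather, it is a sketch of the argument in \cite{BCT06}, and as such the overall architecture---wave packet decomposition, induction on scales, reduction to multilinear Kakeya---is the correct one.

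That said, two details in your sketch would not survive scrutiny. First, the claim that for $R\le\lambda^{1-\varepsilon/2}$ the transverse deviation of a curved tube from its tangent line is $O(R^2/\lambda)\ll R^{1/2}$ is false: that inequality requires $R\lesssim\lambda^{2/3}$, and for $R$ close to $\lambda$ the tubes bend by far more than their width. Bennett--Carbery--Tao circumvent this not by a single sub-partitioning but by the induction-on-scales mechanism itself: at each step one passes to balls of radius $R^{1-\delta}$ (or $R^{1/2}$ in their original formulation), and the relative curvature improves geometrically, so that after $O(\log\log R)$ iterations one reaches a scale where straight-tube Kakeya is legitimate. Your one-line dismissal of the curvature issue, and the appeal to Proposition~\ref{proa} (which concerns square-function constants, not the multilinear estimate), do not capture this. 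Second, the recursion you write, $Q(R)\le C R^{\delta\varepsilon/2}+Q(R^{1-\delta})$, is not what the argument actually produces; the correct relation is multiplicative, of the form $Q(R)\le C\,Q(R^{1-\delta})$ with $C$ depending only on $\vartriangle$ and $A_2$, which iterates to $Q(R)\le C^{O_\delta(\log\log R)}\lesssim_\varepsilon R^\varepsilon$. These are repairable imprecisions rather than a wrong strategy, but they are precisely the points where the variable-coefficient case diverges from the constant-coefficient one and where \cite{BCT06} does the real work.
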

Before the proof of the main theorem, we take the constant case $e^{it\sqrt{-\Delta}}$ to give an intuition behind the proof.  Actually \eqref{eq:6} can be upgraded to the following  the refined version
\beq
\Big\|\prod_{j=1}^3 |e^{it\sqrt{-\Delta}} f_j|^{\frac13}\Big\|_{L^3(B_R)}\lesssim_\varepsilon   R^{-1/2+\varepsilon}\prod_{j=1}^3 \|e^{it\sqrt{-\Delta}}f_j\|_{L^2(w_{B_R})}^{\frac{1}{3}},
\eeq
where $\Omega_j:={\rm supp}\hat{f_j}$ are separated in the angular direction. Then  we have by orthogonality
\beq\label{eq:q1}
\|e^{it\sqrt{-\Delta}}f_j\|_{L^2(w_{B_R})}\leq C \Big(\sum_{\nu_j\cap \Omega_j \neq \varnothing}\|e^{it\sqrt{-\Delta}}f_{\nu_j}\|_{L^2(w_{B_R})}^2\Big)^{1/2},
\eeq
where $\hat{f}_{\nu_j}=\hat{f}\chi_{\nu_j}$.
 By   H\"older's inequality, we obtain
 \beqq
 \Big\|\prod_{j=1}^3 |e^{it\sqrt{-\Delta}} f_j|^{\frac13}\Big\|_{L^3(B_R)}\lesssim_\varepsilon   R^{\varepsilon}\prod_{j=1}^3 \Big\|(\sum_{\nu_j\cap \Omega_j\neq \varnothing}|e^{it\sqrt{-\Delta}}f_{\nu_j}|^2)^{\f{1}{2}}\Big\|_{L^3(w_{B_R})}^{\frac{1}{3}}.
 \eeqq

  For the variable coefficient setting,  we can't carry out the above procedure verbatim since there are some obstacles appearing.  The orthogonality property, among other things, will be obscure.  Actually, in the constant case, for different $j$, the orthogonality shared among $\nu_j$ is  obvious in the frequency space. Things are different for our setting since we can't identify the precise location in the frequency space for different $\nu_j$.

  To overcome this obstacle, we will further decompose the plate in the radial direction to obtain equally-spaced boxes so that the locally constant property can be exploited.

\begin{definition}[locally constant property]
	Assume $d\geqslant 1$, given a function $F: \R^d \rightarrow [0,\infty)$, we say $F$ satisfies the
	\emph{ locally constant  property} at scale of $\rho$ if $F(x)\approx F(y)$ whenever $|x-y|\leqslant C_0 \rho$. Here the implicit constant in ``\,$\approx$" could depend on the structure constant $C_0$.
\end{definition}
 Let us define  the extension operator $E$ to be
\beq
Ef(x,t):=\int_{\R^2} e^{i(x\eta+t h(\eta))} a_2(\eta) f(\eta) \dif \eta,
\eeq
where $h(\eta)$ is a smooth away from origin and homogeneous of degree $1$ with
\begin{equation}
{\rm rank}\;\partial^2_{\eta\eta} h=1,\quad \text{for all } \;\; \eta\in {\rm supp}\;a_2.
\end{equation}
Assume $r\geqslant 1$ and $f$ is supported in a  $r^{-1}$ neighborhood of $\eta_0 \in {\rm supp} \;a_2$, then ${\rm supp} \;\widehat{Ef}$ is contained  in a ball of radius $r^{-1}$, by uncertainty principle, one may roughly view $|Ef|$ essentially as a constant at a scale of $r$. However, that is not the case for the oscillatory  operator $\mathscr{T}_{\lambda}$, since  $\widehat{\mathscr{T}_\lambda f}$ is not necessarily compactly supported. One may nevertheless, up to  phase rotation and a negligible term, recover the locally constant property.
\begin{lemma}[\cite{GHI}]\label{le3}
	Let $\mathscr{T}_{\lambda}$ be given by \eqref{eq:200a}. There exists a smooth, rapidly decreasing function $\varrho:\R^3\to[0,\infty)$ with the following property: ${\rm supp} \hat{\varrho} \subset B(0,1)$ such that  if $\e>0$ and
	$f$ is supported in a $r^{-1}$-cube centered at $\bar{\eta}$ with  $1\leqslant  r \leqslant\lambda^{1-\e}$, then
	\beq\label{eq:47}
	e^{- i \phi^\lambda(\cdot,\bar{\eta})} \mathscr{T}_{\lambda}f=\bigl[e^{- i \phi^{\lambda}(\cdot,\bar{\eta})}
	\mathscr{T}_{\lambda}f\bigr]\ast \varrho_r+{\rm RapDec}(\lambda)\|f\|_{L^{2}(\R^2)}
	\eeq
	holds, where $\varrho_r(z)=r^{-3}\varrho(z/r)$.
\end{lemma}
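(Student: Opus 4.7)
The plan is to establish the locally constant property by showing that, after removing the rapid oscillation $e^{i\phi^\lambda(\cdot,\bar\eta)}$, the function $\tilde G(z):=e^{-i\phi^\lambda(z,\bar\eta)}\mathscr{T}_{\lambda}f(z)$ has Fourier transform essentially supported in a ball of radius comparable to $1/r$, up to an error that is rapidly decreasing in $\lambda$. To realize this I would choose $\varrho$ to be a fixed nonnegative Schwartz function with $\int\varrho=1$, whose Fourier transform is supported in $B(0,1)$ and identically one on a smaller ball $B(0,c_\phi)$ with $c_\phi$ depending only on the derivatives of $\phi$. Then $\widehat{\varrho_r}(\xi)=\widehat\varrho(r\xi)$ is supported in $B(0,1/r)$ and equals one on $B(0,c_\phi/r)$, so that the identity claimed in the lemma will follow once
\[
|\tilde G(z)-(\tilde G*\varrho_r)(z)| \leq \int_{|\xi|\geq c_\phi/r} |\widehat{\tilde G}(\xi)|\,\dif\xi
\]
is shown to be $\operatorname{RapDec}(\lambda)\|f\|_{L^2}$.

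The bulk of the argument is the Fourier-side estimate, which I would obtain by integration by parts in $z$. Writing $\Psi(z,\eta):=\phi^\lambda(z,\eta)-\phi^\lambda(z,\bar\eta)$ and using $\phi^\lambda(z,\eta)=\lambda\phi(z/\lambda,\eta)$ together with the mean value theorem in $\eta$, the scaling $\nabla_z^k\phi^\lambda=\lambda^{1-k}\nabla_z^k\phi(z/\lambda,\eta)$ gives $|\nabla_z\Psi(z,\eta)|\leq C|\eta-\bar\eta|\leq C/r$ and, crucially, $|\nabla_z^k\Psi(z,\eta)|\leq C_k\lambda^{1-k}/r$ for $k\geq 2$. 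Hence $|\nabla_z(\Psi-z\cdot\xi)|\geq|\xi|/2$ whenever $|\xi|\geq 2C/r$, and one may integrate by parts using the standard operator $L=\nabla_z(\Psi-z\xi)/\bigl(i|\nabla_z(\Psi-z\xi)|^2\bigr)\cdot\nabla_z$. The key bookkeeping point is that each application of $L^*$ contributes a gain of $C/(\lambda|\xi|)$: the contribution from differentiating $a_\lambda$ is bounded by $|\nabla_z a_\lambda|/|\nabla_z(\Psi-z\xi)|\lesssim\lambda^{-1}|\xi|^{-1}$, while the contribution from differentiating the vector field prefactor is $|\nabla_z^2\Psi|/|\nabla_z(\Psi-z\xi)|^2\lesssim 1/(r\lambda|\xi|^2)$, which is again $\lesssim 1/(\lambda|\xi|)$ in view of $r|\xi|\gtrsim 1$. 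Iterating $N$ times and integrating over the $O(\lambda^3)$-volume $z$-support of $a_\lambda$ yields
\[
\Big|\int e^{i[\Psi(z,\eta)-z\cdot\xi]}a_\lambda(z,\eta)\,\dif z\Big| \leq C_N\lambda^3(\lambda|\xi|)^{-N}.
\]

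Since $r\leq\lambda^{1-\varepsilon}$, on the region $|\xi|\geq 2C/r$ we have $\lambda|\xi|\gtrsim\lambda^\varepsilon$. Splitting $N=N_1+N_2$ and writing $(\lambda|\xi|)^{-N_1}\leq\lambda^{-N_1\varepsilon}$, $(\lambda|\xi|)^{-N_2}\leq|\xi|^{-N_2}\lambda^{-N_2}$, one concludes for arbitrary $M\in\N$ the bound
\[
|\widehat{\tilde G}(\xi)| \leq \operatorname{RapDec}(\lambda)\,(1+|\xi|)^{-M}\|f\|_{L^1(\R^2)} \leq \operatorname{RapDec}(\lambda)\,(1+|\xi|)^{-M}\,r^{-1}\|f\|_{L^2(\R^2)},
\]
where the last step uses Cauchy-Schwarz on the $\eta$-integral together with the fact that $f$ is supported on an $r^{-1}$-cube of measure $r^{-2}$. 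Taking $M>3$ makes the remaining $\xi$-integral convergent with only a polynomial-in-$r$ loss, which is harmless since $r\leq\lambda$ and such factors are absorbed into the $\operatorname{RapDec}$. The main obstacle is precisely this bookkeeping in the iterated IBP: although $|\nabla_z\Psi|\sim 1/r$ is not small, each further $z$-derivative of $\Psi$ enjoys an extra factor $\lambda^{-1}$ coming from the $z/\lambda$ scaling inside $\phi^\lambda$, and it is this scaling, combined with the lower bound $r|\xi|\gtrsim 1$ on the relevant $\xi$-range, that must be exploited to ensure every application of $L^*$ really saves the full factor $1/(\lambda|\xi|)$ rather than only $1/|\xi|$.
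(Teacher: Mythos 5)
The paper does not actually prove this lemma; it is quoted verbatim from \cite{GHI}, so there is no ``paper's own proof'' to compare against. That said, the core of your argument --- integrating by parts in $z$ against $e^{i[\Psi(z,\eta)-z\cdot\xi]}$ and exploiting that $\nabla_z^k\phi^\lambda(z,\eta)=\lambda^{1-k}(\nabla_z^k\phi)(z/\lambda,\eta)$, so that each application of $L^*$ saves a full factor of $(\lambda|\xi|)^{-1}$ once $|\xi|\gtrsim r^{-1}$ --- is exactly the mechanism used in \cite{GHI}, and your bookkeeping of the gains from $\nabla_z a_\lambda\lesssim\lambda^{-1}$ and $\nabla_z^2\Psi\lesssim\lambda^{-1}/r$ (combined with $r|\xi|\gtrsim1$) is correct. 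The splitting $N=N_1+N_2$ and the use of $r\leq\lambda^{1-\varepsilon}$ to convert the $(\lambda|\xi|)^{-N_1}$ factor into $\lambda^{-N_1\varepsilon}$ is also the right way to obtain the $\operatorname{RapDec}(\lambda)$ gain.

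There is, however, a genuine flaw in your choice of $\varrho$. You ask for $\varrho\geq0$, $\supp\hat\varrho\subset B(0,1)$, and $\hat\varrho\equiv1$ on a ball $B(0,c_\phi)$. These three conditions are jointly impossible: if $\varrho\geq0$ then $\hat\varrho$ is positive-definite, and a continuous positive-definite function which is identically equal to its value at the origin on an open neighbourhood of $0$ must be constant on all of $\R^3$ (one sees this from the inequality $|\hat\varrho(\xi)-\hat\varrho(\xi')|^2\leq 2\,\hat\varrho(0)\bigl(\hat\varrho(0)-\operatorname{Re}\hat\varrho(\xi-\xi')\bigr)$), contradicting compact support. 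In the argument from \cite{GHI}, the mollifier used for the \emph{identity} \eqref{eq:47} is not required to be nonnegative --- it is of Schwartz type with compactly supported Fourier transform equal to one on a ball --- and a \emph{separate} nonnegative rapidly decaying majorant enjoying the locally constant property is introduced for the pointwise estimates downstream (e.g.\ the Minkowski step in Lemma~\ref{lem:LLL}). You should therefore decouple the two roles: prove the identity with a non-sign-definite $\varrho$, and then dominate $|\varrho|$ by a nonnegative, rapidly decaying, locally constant majorant. There is also a harmless but real constant mismatch you glossed over: $|\nabla_z\Psi|\leq C_\phi/r$ with $C_\phi=\|\partial^2_{\eta z}\phi\|_{\infty}$ not necessarily $<1/2$, so the integration-by-parts range $|\xi|\gtrsim C_\phi/r$ need not sit inside $\{|\xi|\geq 1/(2r)\}$; one must either rescale the mollifier to $\varrho_{r/\bar C}$ for a structural constant $\bar C\geq 1$ (as in \cite{GHI}) or invoke the normalization $(\mathbf{H}_{\bf A})$, $(\mathbf{D}_{\bf A})$ together with the smallness of $\varepsilon_0$ to ensure $C_\phi$ is admissible.
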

One may further choose $\varrho$  to satisfy the locally constant property at the scale of $1$.  Correspondingly, one may view  $\varrho_r$
as a constant at scale of $r$.

Let $\rho\in C_{c}^{\infty}(\R^2) $ and satisfy
\beq
\sum_{l\in \mathbb{Z}^2}\rho(\eta-l)\equiv 1, \forall \; \eta \in \R^2.
\eeq
Let $\e>0$ be small and
$\mathbf{Q}=\{Q_k\}_k$ be a mesh of cubes of sidelength $R^{1/2-\varepsilon}$,
which are centered at
lattices belong to  $R^{1/2-\varepsilon}\mathbb{Z}^{2+1}$
with sides parallel to the axis and form a tiling of ${\rm supp}_z a_\lambda(\cdot, \eta)$.
For each $Q_k \in\mathbf{Q}$, let $z_k$ be
the center of $Q_k$
and set
\begin{align*}
\mathscr{T}_{\lambda,k}^{\nu_j,l_j}f(z)&=\int e^{i\phi^\lambda (z,\eta)}a_{\lambda,k}^{\nu_j,l_j}(z,\eta) f(\eta)\dif \eta,\\
a_{\lambda,k}^{\nu_j,l_j}(z,\eta)&=a_{\lambda}^{\nu_j}(z,\eta)\rho(R^{1/2}\partial_{x}\phi^\lambda(z_k,\eta)-l_j).
\end{align*}
Obviously
\beq
\sum_{l\in \mathbb{Z}^2} \rho(R^{1/2}\partial_{x}\phi^{\lambda}(z_k,\eta)-l)\equiv 1, \;\;\;\;\forall \; \eta \in \R^2.
\eeq
The support of $\eta\to a_{\lambda,k}^{\nu_j,l_j}(z,\eta)$ is essentially contained in the cube  of sidelength comparable to $R^{-1/2}$ centered at $\eta_k^{\nu_j,l_j}$, which is denoted as  $\mathcal{D}_{k}^{\nu_j,l_j}$.
Ultimately we have
\beq \label{eq:62}
\mathscr{T}_{\lambda}^j f(z)\Big|_{z\in Q_k}=\sum_{\nu_j} \mathscr{T}_{\lambda, k}^{\nu_j}f(z),\;\; \mathscr{T}_{\lambda,k}^{\nu_j}f(z)\Big|_{z\in Q_k}=\sum_{l_j} \mathscr{T}_{\lambda, k}^{\nu_j, l_j}f(z).
\eeq

We may reduce \eqref{eq:110} to
a discretized version by exploiting a standard transference argument based on locally constant property.
\begin{proposition} \label{pro2}
	Let $Q_k \in\mathbf{Q}$ be  as above  defined and  each triplet $(\nu_1, \nu_2,\nu_3)$ satisfies angular separation condition in the sense that
	\beq
	\mathsf{ Ang}(\theta_{\nu_i}, \theta_{\nu_j})\geq \vartriangle, \quad\;\text{for}\quad 1\leq i<j\leq 3.
	\eeq
	Then
	\beq\label{eq-add-1}
	\Bigl\|\prod_{j=1}^3\Bigl|\sum_{\nu_j,l_j} e^{i\phi^\lambda(\cdot,\eta^{\nu_j,l_j}_k)} c_{\nu_j,l_j}\Bigr|^{1/3}\Bigr\|_{L^{3}(Q_k)}\lessapprox  R^{1/2}
 \prod_{j=1}^3 \Bigl(\sum_{\nu_j,l_j}|c_{\nu_j,l_j}|^2\Bigr)^{\frac{1}{6}}.
	\eeq
\end{proposition}
\begin{proof}
	Without loss of generality, we may assume the center of $B$ is $\mathbf{0}$  and normalise the phase function by setting
	$
	\psi^{\lambda}(z,\eta)=\phi^\lambda(z,\eta)-\phi^{\lambda}(\mathbf{0},\eta).
	$
	Let
	\beq
	b_k^{\nu_j,l_j}(z,\eta_j)
	=\Bigl(\int_{\mathcal{D}_k^{\nu_j,l_j}}
	e^{i[\psi^{\lambda}(z,\eta)-\psi^{\lambda}(z,\eta^{\nu_j,l_j}_k)]}\,
	\dif\eta \Bigr)^{-1}\times\chi_{\mathcal{D}_k^{\nu_j,l_j}}(\eta_j).
	\eeq
	It is easy to see
	\begin{gather}
	|b^{\nu_j,l_j}_k(z,\eta_j)|\leqslant R,\quad
	|\partial^{\alpha}_z b^{\nu_j,l_j}_k(z,\eta_j)|\leqslant  C_{\alpha} R^{1-\frac{|\alpha|}{2}},\quad \forall z\in Q_k. \label{eq:33}
	\end{gather}
	After multiplying $c_{\nu_j,l_j}$ with a harmless factor
	$e^{-i\phi^{\lambda}(\mathbf{0},\eta^{\nu_j,l_j}_k)}$,  we may evaluate
	the  $L^3 $ norm of the quantity  in \eqref{eq-add-1} by
	\begin{equation}
	\label{eq:67}
	\Big\|\prod_{j=1}^3\Big|\int  e^{i\psi^{\lambda}(z,\eta_j)}\sum_{\nu_j,l_j}
	b_k^{\nu_j,l_j}(z,\eta_j)\,c_{\nu_j,l_j}\,\dif\eta_j\Big|^{\frac{1}{3}}
	\Big\|_{L^{3}(Q_k)}.
	\end{equation}	
	Let
	\beq
	B_k(z,\eta_1,\eta_2,\eta_3)
	=\prod_{j=1}^3 \sum_{\nu_j,l_j} b_k^{\nu_j,l_j}(z,\eta_j)\;c_{\nu_j,l_j}.
	\eeq
	By the fundamental theorem of calculus,
	\begin{multline}\label{eq:29}
	B_k(z,\eta_1,\eta_2,\eta_3)
	=B_k(\mathbf{0},\eta_1,\eta_2,\eta_3)
	+\int_0^{x_1}\partial_{u_1}B_k(u_1,0,0,\eta_1,\eta_2,\eta_3)\,\dif u_1\\ +\cdots
	+\int_{0}^{x_1}\int_0^{x_2} \int_{0}^{t}\frac{\partial^{3} B_k}{\partial u_1\partial u_2 \partial u_{3}}(u,\eta_1,\eta_2,\eta_3)\,\dif u.
	\end{multline}
	We only handle $B_k(\mathbf{0},\eta_1,\eta_2,\eta_3)$  for the others terms can be handled in the same way.
	
	The explicit formula of $B_k(\mathbf{0},\eta,\eta')$ reads
	\beq
	\label{eq:opopp}
	B_k(\mathbf{0},\eta_1,\eta_2,\eta_3)=\prod_{j=1}^3 \sum_{\nu_j,l_j}b_k^{\nu_j,l_j}(\mathbf{0},\eta_j)c_{\nu_j,l_j}.
	\eeq
	The contribution of \eqref{eq:opopp}
	to \eqref{eq:67} is bounded by
	\begin{equation}
	\label{kkkkk}
	\Big\| \prod_{j=1}^3\Bigl|\int  e^{i \psi^{\lambda}(z,\eta_j)}
	\sum_{\nu_j,l_j}c_{\nu_j,l_j}\,b_k^{\nu_j,l_j}(\mathbf{0},\eta_j) \,\dif\eta_j \Bigr|^{\frac{1}{3}}\Big\|_{L^{3}}.
	\end{equation}
	 Following  the same approach  in the proof of the Proposition \ref{prob}, one may verify the transversality condition \eqref{eq:223} for the triple product in \eqref{kkkkk}.
	Thus, by Theorem \ref{theo2},  we have
	\begin{equation*}
	\eqref{kkkkk} \lessapprox
	\prod_{j=1}^3 \Bigl\|\sum_{\nu_j,l_j}c_{\nu_j,l_j}b_k^{\nu_j,l_j}(\mathbf{0},\eta_j)\Bigr\|^{\frac{1}{3}}_{L^2(\dif\eta_j)}\lesssim_{\varepsilon, \vartriangle, A_2}  R^{\varepsilon+1/2} \; \prod_{j=1}^3 \Bigl(\sum_{\nu_j,l_j}|c_{\nu_j,l_j}|^2\Bigr)^{1/6}.
	\end{equation*}
	So that we complete the proof of \eqref{eq-add-1}
\end{proof}

Now, we may complete the proof of \eqref{eq:110}. We start with proving the following estimate
\begin{lemma}
	\label{lem:LLL}
	For $z\in Q_k$, if we
	denote by
	\[
	\mathscr{H}^{\nu_j,l_j}_{\lambda,k} f(z)=
	\Big(e^{-i\phi^\lambda(\cdot,\eta^{\nu_j,l_j}_k)}\mathscr{T}_{\lambda,k}^{\nu_j,l_j}f \Big)(z),\]
	then we have
	\begin{align}\nonumber
		&\sum_{Q_k\in\mathbf{Q}}\Big\|\prod_{j=1}^3\Big|\sum_{\nu_j,l_j}
	e^{i\phi^{\lambda}(\cdot,\eta^{\nu_j,l_j}_k)}
	(\mathscr{H}^{\nu_j,l_j}_{\lambda,k} f)\ast \varrho_{R}
	\Big|^{\frac{1}{3}}\Big\|_{L^{3}(Q_k)}^{3}\\
	\lessapprox& \prod_{j=1}^3 \Big(\sum_{Q_k\in\mathbf{Q}} \Bigl\|\Big(\sum_{\nu_j,l_j}|\mathscr{T}_{\lambda,k}^{\nu_j,l_j}f|^2\Big)^{\frac12}\Bigr\|_{L^{3}(w_{Q_k})}^{3}\Big)^{\frac13}.
	\label{eq:38}
		\end{align}
\end{lemma}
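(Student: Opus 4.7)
The plan is to freeze the slowly-varying amplitudes $(\mathscr{H}^{\nu_j,l_j}_{\lambda,k}f)\ast\varrho_R$ to scalar coefficients on each cube $Q_k$, apply the pure-phase trilinear estimate of Proposition \ref{pro2} cube by cube, and reassemble via H\"older's inequality in the discrete variable $Q_k$. The bridging tool is the locally constant property: since $\varrho_R$ is locally constant at a scale much larger than $\mathrm{diam}(Q_k)=R^{1/2-\varepsilon}$, setting $\tilde c^k_{\nu_j,l_j}:=(\mathscr{H}^{\nu_j,l_j}_{\lambda,k}f)\ast\varrho_R(z_k)$, one has for every $z\in Q_k$
\beqq
(\mathscr{H}^{\nu_j,l_j}_{\lambda,k}f)\ast\varrho_R(z)=\tilde c^k_{\nu_j,l_j}\cdot\bigl(1+O(R^{-1/2-\varepsilon})\bigr),
\eeqq
and the cumulative error over the $\#\{(\nu_j,l_j)\}\approx R^{O(1)}$ summands is $O(R^{-\varepsilon})$, absorbed by $\lessapprox$.

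Applying Proposition \ref{pro2} on each $Q_k$ with the frozen coefficients (transversality inherited from the angular separation of $(\nu_1,\nu_2,\nu_3)$, exactly as verified in the proof of Proposition \ref{prob}) gives
\beqq
\Big\|\prod_{j=1}^3\Big|\sum_{\nu_j,l_j}e^{i\phi^\lambda(\cdot,\eta^{\nu_j,l_j}_k)}(\mathscr{H}^{\nu_j,l_j}_{\lambda,k}f)\ast\varrho_R\Big|^{\frac13}\Big\|_{L^3(Q_k)}^3\lessapprox R^{3/2}\prod_{j=1}^3\Big(\sum_{\nu_j,l_j}|\tilde c^k_{\nu_j,l_j}|^2\Big)^{\frac12}.
\eeqq
Summing over $Q_k\in\mathbf{Q}$ and applying the three-fold H\"older inequality bounds the LHS of \eqref{eq:38} by $R^{3/2}\prod_{j=1}^3\bigl(\sum_{Q_k}(\sum_{\nu_j,l_j}|\tilde c^k_{\nu_j,l_j}|^2)^{3/2}\bigr)^{1/3}$.

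To conclude, I invoke the locally constant property once more, this time for each $|\mathscr{T}^{\nu_j,l_j}_{\lambda,k}f|$, which by Lemma \ref{le3} is essentially constant on $Q_k$. Combined with $|Q_k|\approx R^{3/2-3\varepsilon}$, this yields
\beqq
R^{3/2}\Big(\sum_{\nu_j,l_j}|\tilde c^k_{\nu_j,l_j}|^2\Big)^{\frac32}\lessapprox R^{3\varepsilon}\int_{Q_k}\Big(\sum_{\nu_j,l_j}|\mathscr{T}^{\nu_j,l_j}_{\lambda,k}f|^2\Big)^{\frac32}\dif z\lessapprox\Big\|\Big(\sum_{\nu_j,l_j}|\mathscr{T}^{\nu_j,l_j}_{\lambda,k}f|^2\Big)^{\frac12}\Big\|_{L^3(w_{Q_k})}^3,
\eeqq
where $R^{3\varepsilon}$ is absorbed into $\lessapprox$ and passage from $\int_{Q_k}$ to $L^3(w_{Q_k})$ is immediate from the nonnegativity and lower bound of $w_{Q_k}$ on $Q_k$. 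Substituting into the product yields \eqref{eq:38}.

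The hardest step will be making the freezing uniform over $(\nu_j,l_j)$: one must verify that replacing $(\mathscr{H}^{\nu_j,l_j}_{\lambda,k}f)\ast\varrho_R(z)$ by $\tilde c^k_{\nu_j,l_j}$ does not destroy the phase interference exploited by Proposition \ref{pro2} after summing $R^{O(1)}$ terms. The resolution is that $\widehat{\varrho_R}$ is concentrated in a ball of radius $R^{-1}$, so Bernstein at scale $R^{-1}$ controls the pointwise variation of the convolution across $Q_k$ by $R^{-1/2-\varepsilon}$ relatively, leaving a net perturbation of order $R^{-\varepsilon}$ plus an ${\rm RapDec}(\lambda)\|f\|_{L^2}$ tail from Lemma \ref{le3}, both harmless.
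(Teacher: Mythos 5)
Your high-level plan -- exploit the locally constant property to reduce to Proposition \ref{pro2} with frozen scalar coefficients, then reassemble -- matches the paper's intent, but your execution diverges in a way that opens genuine gaps. The paper does \emph{not} replace $(\mathscr{H}^{\nu_j,l_j}_{\lambda,k}f)\ast\varrho_R$ by a scalar on each $Q_k$. Instead it writes the convolution out as $\int\mathscr{H}^{\nu_j,l_j}_{\lambda,k}f(y_j)\,\varrho_R(z-y_j)\dif y_j$, uses that $\varrho_R(z-y_j)\approx\varrho_R(\bar z-y_j)$ for $z,\bar z\in Q_k$ (which is exactly the locally constant property of $\varrho_R$ itself, at the comparison level $\approx$, not with a $(1+o(1))$ factor), applies Minkowski to pull the $y_j$-integrals outside the $L^3(Q_k)$ norm, and only \emph{then} invokes Proposition \ref{pro2} for each frozen $y_j$ -- there $\mathscr{H}^{\nu_j,l_j}_{\lambda,k}f(y_j)$ is trivially a scalar, so no approximation is needed. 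After averaging over $\bar z\in Q_k$, the convolution is traded to the weight via $\int w_{Q_k}(z+y)\varrho_R(y)\dif y\lessapprox w_{Q_k}(z)$. This is cleaner than freezing and avoids the difficulty you yourself flag.

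Two specific gaps in your version. First, the claim
$(\mathscr{H}^{\nu_j,l_j}_{\lambda,k}f)\ast\varrho_R(z)=\tilde c^k_{\nu_j,l_j}\bigl(1+O(R^{-1/2-\varepsilon})\bigr)$
asserts a \emph{multiplicative} stability that is not available. The locally constant property (as defined in the paper, and for nonnegative functions) gives comparability $\approx$ with a fixed constant, not a $1+o(1)$ factor; and $\mathscr{H}^{\nu_j,l_j}_{\lambda,k}f\ast\varrho_R$ is complex-valued, so even an exact multiplicative $(1+O(\delta))$ bound on $\varrho_R$ does not pass to a multiplicative bound on the convolution (cancellation can make $|\tilde c^k_{\nu_j,l_j}|$ arbitrarily small compared to $\int|\mathscr{H}|\varrho_R$). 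Your Bernstein fix gives the additive bound $|(\mathscr{H}\ast\varrho_R)(z)-\tilde c^k_{\nu_j,l_j}|\lesssim R^{-1/2-\varepsilon}\|\mathscr{H}\ast\varrho_R\|_{L^\infty}$, which is a different statement, and turning the accumulated additive error over $R^{O(1)}$ terms into an $O(R^{-\varepsilon})$ \emph{relative} perturbation of the trilinear $L^3(Q_k)$ norm is exactly the step that needs an argument (Cauchy--Schwarz on the error vector plus a quantitative comparison to the main term) that is not supplied.

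Second, in the reassembly step you need an upper bound of the form $R^{3/2}\bigl(\sum_{\nu_j,l_j}|\tilde c^k_{\nu_j,l_j}|^2\bigr)^{3/2}\lessapprox\bigl\|(\sum_{\nu_j,l_j}|\mathscr{T}^{\nu_j,l_j}_{\lambda,k}f|^2)^{1/2}\bigr\|_{L^3(w_{Q_k})}^3$, and you justify it by asserting $|\mathscr{T}^{\nu_j,l_j}_{\lambda,k}f|$ is essentially constant on $Q_k$ by Lemma \ref{le3}. Lemma \ref{le3} gives locally constant behaviour at scale $R^{1/2}$ (the side of $\mathcal{D}_k^{\nu_j,l_j}$ is $R^{-1/2}$), which covers $Q_k$ of side $R^{1/2-\varepsilon}$; but $\tilde c^k_{\nu_j,l_j}$ is an average against $\varrho_R$ over a ball of radius $\sim R$, which extends far beyond the scale at which $|\mathscr{T}^{\nu_j,l_j}_{\lambda,k}f|$ is locally constant and far beyond the region where $w_{Q_k}$ has appreciable mass. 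Your "passage from $\int_{Q_k}$ to $L^3(w_{Q_k})$ is immediate" therefore does not address the possibility that $|\tilde c^k_{\nu_j,l_j}|$ is dominated by mass of $\mathscr{T}^{\nu_j,l_j}_{\lambda,k}f$ located at distance $\sim R$ from $z_k$, which the weight $w_{Q_k}$ (with $N\gg 1$) suppresses below the level you need. The paper's formulation handles this cleanly through the single convolution-vs-weight comparison $\int w_{Q_k}(z+y)\varrho_R(y)\dif y\lessapprox w_{Q_k}(z)$ applied after Minkowski and averaging, keeping track of the scales in one place.
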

 By combining \eqref{eq:38} with the following lemma which glues the equally-spaced box to recover the desered plate, then we complete the proof of \eqref{eq:110}.
\begin{lemma}[\cite{GaMiaYa18P}]
	\beq
	\label{eq:koko}
	\Bigl\|\Big(\sum_{\nu_j,l_j}
	|\mathscr{T}_{\lambda,k}^{\nu_j,l_j}f|^2\Big)^{\frac12}
	\Bigr\|_{L^{3}(w_{Q_k})}
	\lessapprox \Bigl\|\Big(\sum_{\nu_j}|\mathscr{T}_{\lambda}^{\nu_j}f|^2\Big)^{\frac12}\Bigr\|_{L^{3}(w_{Q_k})} +\lambda^{-N}\|f\|_{L^2}.
	\eeq
\end{lemma}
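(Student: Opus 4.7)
The plan is to establish the pointwise-on-$Q_k$ bound
\begin{equation*}
\sum_{l_j}\bigl|\mathscr{T}_{\lambda,k}^{\nu_j,l_j}f(z)\bigr|^{2}
\;\lessapprox\;
\int\bigl|\mathscr{T}_{\lambda}^{\nu_j}f(w)\bigr|^{2}\,\omega_{Q_k(z)}(w)\,\dif w
\;+\;\lambda^{-N}\|f\|_{L^{2}}^{2}
\end{equation*}
for each fixed $\nu_j$, where $\omega_{Q_k(z)}$ is a Schwartz weight adapted to a cube of side $R^{1/2-\varepsilon}$ centered at $z$. Given this, summing in $\nu_j$ and applying Jensen's inequality (first interchanging the sum with the local average, then pulling $x^{3/2}$ out of $\fint$) yields the required $L^{3}(w_{Q_k})$ estimate after integration.

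To prove the pointwise bound, set $g_{l_j}(z):=e^{-i\phi^{\lambda}(z,\eta^{\nu_j,l_j}_k)}\mathscr{T}_{\lambda,k}^{\nu_j,l_j}f(z)$. Taylor-expanding the phase in $\eta$ around $\eta^{\nu_j,l_j}_k$ shows that, on $Q_k$, $g_{l_j}$ is essentially frequency-localized in a ball of radius $R^{-1/2+O(\varepsilon)}$ about the origin; equivalently, after restoring the phase, $\mathscr{T}_{\lambda,k}^{\nu_j,l_j}f$ is frequency-localized near $\partial_{x}\phi^{\lambda}(z_k,\eta^{\nu_j,l_j}_k)\approx R^{-1/2}l_j$, and these centers are $R^{-1/2}$-separated as $l_j$ varies. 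Applying Lemma~\ref{le3} (locally constant property) in the version suitable for cubes of side $R^{-1/2}$ allows us to replace each $g_{l_j}$ by its convolution with a Schwartz bump $\varrho_{R^{1/2}}$ at scale $R^{1/2}$, up to an additive ${\rm RapDec}(\lambda)\|f\|_{L^{2}}$ error. Expanding $|\mathscr{T}_{\lambda}^{\nu_j}f|^{2}=\sum_{l_j,l_j'}g_{l_j}\overline{g_{l_j'}}$ modulated by $e^{i[\phi^{\lambda}(z,\eta^{\nu_j,l_j}_k)-\phi^{\lambda}(z,\eta^{\nu_j,l_j'}_k)]}$, the off-diagonal cross terms oscillate at a scale $\gtrsim R^{-1/2}|l_j-l_j'|$ and are annihilated when tested against the convolution bump at scale $R^{1/2-\varepsilon}$. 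Hence, after convolution,
\begin{equation*}
\sum_{l_j}|g_{l_j}(z)|^{2}\lessapprox \fint_{Q_k(z)}\Bigl|\sum_{l_j}g_{l_j}\Bigr|^{2}+{\rm RapDec}(\lambda)\|f\|_{L^{2}}^{2},
\end{equation*}
and the right-hand side is $\fint_{Q_k(z)}|\mathscr{T}_{\lambda}^{\nu_j}f|^{2}$ up to the same error, as required.

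The routine but slightly technical step is to pass from the pointwise/averaged $\ell^{2}_{l_j}$ control (for a single $\nu_j$) to the claimed $L^{3}(w_{Q_k})$ inequality for the full square function. One writes
\begin{equation*}
\Bigl(\sum_{\nu_j,l_j}|\mathscr{T}_{\lambda,k}^{\nu_j,l_j}f(z)|^{2}\Bigr)^{3/2}\lessapprox\Bigl(\fint_{Q_k(z)}\sum_{\nu_j}|\mathscr{T}_{\lambda}^{\nu_j}f|^{2}\Bigr)^{3/2}\lesssim \fint_{Q_k(z)}\Bigl(\sum_{\nu_j}|\mathscr{T}_{\lambda}^{\nu_j}f|^{2}\Bigr)^{3/2}
\end{equation*}
by Jensen, and then integrates against $w_{Q_k}(z)$, absorbing the Schwartz-tail fattening into the same weight class. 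The $\lambda^{-N}\|f\|_{L^{2}}$ term arises from summing up the ${\rm RapDec}(\lambda)$ errors over the polynomially many pairs $(\nu_j,l_j)$.

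\textbf{Main obstacle.} The delicate point is verifying that, in the variable-coefficient setting, the functions $g_{l_j}=e^{-i\phi^{\lambda}(\cdot,\eta^{\nu_j,l_j}_k)}\mathscr{T}_{\lambda,k}^{\nu_j,l_j}f$ are genuinely Fourier-localized to disjoint cubes of side $\sim R^{-1/2}$ at the base point $z_k$, and that this localization persists uniformly as $z$ ranges over $Q_k$ (whose sidelength $R^{1/2-\varepsilon}$ is matched to the uncertainty scale). This is exactly what hypothesis $(\mathbf{D}_{\bf 1})$ on the higher derivatives of $\phi$ and the choice of $a_{\lambda,k}^{\nu_j,l_j}$ at the fixed center $z_k$ are designed to ensure. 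Once this is in hand the orthogonality of cross terms against the smoothing kernel $\varrho_{R^{1/2}}$ follows from stationary phase, and the rest is bookkeeping.
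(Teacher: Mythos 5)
Your proposal is essentially sound and is the natural argument for this local $\ell^2$-orthogonality statement: frequency-localize the $\mathscr{T}_{\lambda,k}^{\nu_j,l_j}f$ near the disjoint cubes $\partial_x\phi^\lambda(z_k,\eta_k^{\nu_j,l_j})\approx R^{-1/2}l_j$, use the locally constant property (Lemma~\ref{le3}) to convert the $L^2$-Plancherel orthogonality into a pointwise-on-$Q_k$ bound, and then upgrade from $\ell^2$ to the $L^3$ square function by Jensen and a weight-fattening step. The paper does not reproduce the proof here (it cites \cite{GaMiaYa18P}), but this is consistent with the way the surrounding Lemma~\ref{lem:LLL} and Lemma~\ref{le6} are argued, and the ingredients you invoke ($(\mathbf{D}_{\bf 1})$, Lemma~\ref{le3}, orthogonality, Jensen) are exactly the ones available in the setup.

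Two small points of precision worth noting. First, the locally constant scale is $R^{1/2}$ (the reciprocal of the $\eta$-cube sidelength $R^{-1/2}$), so the Schwartz averaging weight should be adapted to a ball of radius $\sim R^{1/2}$, not $R^{1/2-\varepsilon}$; since this is only a polynomial factor wider than $Q_k$ it is harmlessly absorbed into $w_{Q_k}$, but the phrasing "$R^{1/2-\varepsilon}$" slightly misstates the scale. Second, the off-diagonal terms are not literally "annihilated" for adjacent $l_j,l_j'$ (the phase difference is only $\sim R^{-1/2}$, borderline with the bump's Fourier support); the cleaner justification is the almost-orthogonality/Plancherel argument you already have in mind: after multiplying by a smooth bump at scale $R^{1/2}$ the frequency supports of $\mathscr{T}_{\lambda,k}^{\nu_j,l_j}f$ remain finitely overlapping, giving $\sum_{l_j}\|\chi\,\mathscr{T}_{\lambda,k}^{\nu_j,l_j}f\|_{L^2}^2\lesssim\|\chi\,\mathscr{T}_\lambda^{\nu_j}f\|_{L^2}^2$, which combined with local constancy gives the pointwise bound. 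With these cosmetic fixes the proof plan goes through.
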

Indeed, applying \eqref{eq:koko} to the right hand side of \eqref{eq:38},
summing over $Q_k\in\mathbf{Q}$ and applying Cauchy-Schwarz's inequality, we obtain \eqref{eq:110} and hence \eqref{eq:151} by Proposition \ref{prob}.
Thus, it remains to prove Lemma \ref{lem:LLL}.
\begin{proof}[Proof of Lemma \ref{lem:LLL}]
	By Minkowski's inequality and
	the locally constant property at scale
	$R$ enjoyed by
	$\varrho_{R^{\frac12-\varepsilon}}$, we have
	\beq
	\Big\|\prod_{j=1}^3\Big|\sum_{\nu_j,l_j}
	e^{i\phi^{\lambda}(\cdot,\eta^{\nu_j,l_j}_k)}
	(\mathscr{H}^{\nu_j,l_j}_{\lambda,k} f)\ast\varrho_{R^{\frac12-\varepsilon}}
	\Big|^{\frac{1}{3}}\Big\|_{L^{3}(Q_k)}
	\eeq
	is bounded by
	\begin{equation}
	\label{eq:yyy}
	\iiint
	\Big\| \prod_{j=1}^3 \Big|\sum_{\nu_j,l_j}
	e^{i\phi^\lambda(z,\eta^{\nu_j,l_j}_k)}
	\mathscr{H}^{\nu_j,l_j}_{\lambda,k} f(y_j)\Big|^{\frac{1}{3}}
	\Big\|_{L^{3}(Q_k)}
	\prod_{j=1}^3\varrho_{R^{\frac12-\varepsilon}}(\bar{z}-y_j) \dif y_j,
	\end{equation}
	whenever $\bar{z}\in Q_k$.
	If we use  Proposition \ref{pro2},
	we obtain the following bound
	\begin{equation}\label{eq:188}
	\eqref{eq:yyy}\lessapprox R^{1/2} \prod_{j=1}^3  \int  \Bigl(\sum_{\nu_j,l_j}|\mathscr{T}_{\lambda,k}^{\nu_j,l_j}f(\bar{z}-y_j)|^2\Bigr)^{\frac16}
	\varrho_{R^{\frac12-\varepsilon}}(y_j)\,\dif y_j
	\end{equation}
	which is bounded by, after
	averaging over $Q_k$ in $\bar{z}-$variable,
	and neglecting RapDec$(\lambda)$ terms
	\begin{equation}
	\label{mkdcds}
	\prod_{j=1}^3\int \Bigl\|\Bigl(\sum_{\nu_j,l_j}|\mathscr{T}_{\lambda,k}^{\nu_j,l_j}f(\bar{z}-y_j)|^2\Bigr)^{\frac12}
	\Bigr\|_{L^{3}_{\bar{z}}(Q_k)}^{\frac13}\varrho_{R^{\frac12-\varepsilon}}(y_j)
	\dif y_j
	\end{equation}
	By H\"older's inequality, we have
	\[
	\int \Bigl\|
	\Bigl(\sum_{\nu_j,l_j}
	|\mathscr{T}_{\lambda,k}^{\nu_j,l_j}f(\bar{z}-y)|^2\Bigr)^{\frac12}\Bigr\|^{\frac13}_{L^{3}_{\bar{z}}(Q_k)}
	\varrho_{R^{\frac12-\varepsilon}}(y)
	\,\dif y
	\lessapprox
	\Bigl\| \Bigl(\sum_{\nu_j,l_j}|\mathscr{T}_{\lambda,k}^{\nu_j,l_j}f|^2\Bigr)^{\frac12}
	\Bigr\|_{L^3(w_{Q_k})}^{\frac13},
	\]
	where we have used the following fact,
	\[
	\int_{\R^3}w_{Q_k}(z+y)\varrho_{R^{\frac12-\varepsilon}}(y)\,\dif y\lessapprox w_{Q_k}(z).
	\]
	Summing over $Q_k\in\mathbf{Q}$ and using H\"older's inequality, we conclude the
	proof of \eqref{eq:38}.
\end{proof}

\section{$L^4-$variable coefficient square function estimate via  decoupling}
In this section, by adapting the argument in \cite{Le18P}, we will use decoupling theorem and the induction on scale argument to establish \eqref{eq:152}.

Let us start with a few notations.
Let $\{\theta_\kappa\}$ to be a family of sectors each stretching an angle  $ \approx R^{-1/4}$ and define
\beq
\mathscr{T}_{\lambda}^{\kappa} f:=\sum_{\nu: \theta_\nu  \subset \theta_\kappa}\mathscr{T}_\lambda^{\nu}f.
\eeq
Assume $\delta>0$  and $1\leq K\leq \lambda^{1/2-\delta}.$
For a given point $\bar{z}\in{\rm supp}_z\;a_\lambda$, we take  Taylor expansion of $\phi^{\lambda}$ around  $\bar z$  and make a change of variable: $\eta \rightarrow \Psi^\lambda(\bar z, \eta):= \Psi(\bar z/\lambda, \eta)$ to write
\beq
\mathscr{T}_{\lambda}f(z)=\int_{\R^2}e^{i(\langle z-\bar z, \partial_{ z}\phi^{\lambda}(\bar z, \Psi^\lambda(\bar z,\eta))\rangle+\varepsilon_{\lambda}^{\bar z}(z-\bar z,\eta))}a_{\lambda, \bar z}(z,\eta) f_{\bar z}(\eta)\dif \eta, \; \text{for}\; |z-\bar z|\leq K,
\eeq
where $ f_{\bar z}:=e^{i\phi^{\lambda}(\bar z, \Psi^\lambda(\bar z,\cdot))}f\circ \Psi^{\lambda}(\bar z, \cdot)$ and
\begin{align*}
&a_{\lambda,\bar z}(z, \eta)=a_{\lambda }(z, \Psi^{\lambda}(\bar z,\eta))|{\rm det}\partial_\eta \Psi^\lambda(\bar z,\eta)| ,\\
&\varepsilon^{\bar z}_{\lambda}(v,\eta)=\frac{1}{\lambda}\int_0^1 (1-s)\langle (\partial_{zz}^2 \phi)((\bar z+sv)/\lambda, \Psi^{\lambda}(\bar z,\eta))v,v\rangle \dif s, \; \text{for}\; |v|\leq K.
\end{align*}
For  $\lambda\gg 1$ and thanks to the assumption $(\mathbf{D}_{\bf A})$, we have
\beq \label{eq:64}
\sup \limits_{(v,\eta)\in B(0,K)\times {\rm supp}_\eta a_{\lambda, \bar z}} |\partial^{\beta}_{\eta}\varepsilon_{\lambda}^{\bar z}(v,\eta)|\leq 1, \; \text{for}\; |v|\leq K,
\eeq
where  $\beta \in \mathbb{N}^{2}$ and $|\beta|\leq N $.
By using \eqref{eq:44},  we obtain
\beq
\langle z, \partial_{ z}\phi^{\lambda}(\bar z, \Psi^\lambda(\bar z,\eta))\rangle =x\eta+t h_{\bar z}(\eta),
\eeq
with $h_{\bar z}(\eta):=(\partial_t \phi^{\lambda})(\bar z, \Psi^\lambda(\bar z, \eta))$.

Since we assume $a(z,\eta)=a_1(z)a_2(\eta)$,  by neglecting the influence of spatial  variables, we may approximate  $\mathscr{T}_{\lambda}$
in a suitable manner by extension operators $E_{\bar z}$  at a sufficiently small neighborhood of $\bar z$, where
\beq\label{eq:45}
E_{\bar z}g(z):=\int_{\R^2}e^{i(x\eta+th_{\bar z}(\eta))}a_{2,\bar z}(\eta) g(\eta)\dif \eta,
\eeq
with $a_{2,\bar z}(\eta)= a_2(\Psi^\lambda (\bar z,\eta))|{\rm det}\partial_\eta \Psi^\lambda(\bar z,\eta)|$.
It is clear that $h_{\bar z}(\eta)$ is homogeneous of degree $1$ and satisfying
\begin{equation*}
{\rm rank}\;\partial^2_{\eta\eta}{{h}_{\bar z}}=1,\quad \text{for all } \;\; \eta\in {\rm supp}\;a_{2,\bar z}.
\end{equation*}
Due to the compactness of the support of $a$, we may assume the nonvanishing eigenvalue of $\partial_{\eta\eta}^2h_{\bar z}(\eta)$ is comparable to $1$ and is independent of $\bar z$.

One may  carry over  \emph{mutis mutandis} the approach  in \cite{BelHicSog18P} to prove the following variable variant of $l^2$ decoupling theorem.
\begin{theorem}\label{theo:de}
	Let $1\leq R\leq \lambda$ and $\mathscr{T}_{\lambda}$ be  defined as above, for $2\leq p\leq \infty$, for all $\varepsilon>0$ we have
	\beq
	\|\mathscr{T}_{\lambda}f\|_{L^p(B_R)}\lesssim_{\varepsilon,\phi, N,a} R^{\alpha(p)+\varepsilon} \|\mathscr{T}_\lambda f\|_{\dec(w_{B_R})}+\lambda^{-N}\|f\|_{L^2},
	\eeq
	where
	\begin{equation}
	\alpha(p)=\begin{cases}
	0& ,\;2\leqslant p\leqslant 6,\\
	\f{1}{4}-\f{3}{2p}&, \;6\leqslant p\leqslant\infty.
	\end{cases}
	\end{equation}
\end{theorem}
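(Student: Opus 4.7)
The plan is to follow the strategy of Beltran-Hickman-Sogge, combining an induction on the scale $R$ with a local approximation of $\mathscr{T}_\lambda$ by constant coefficient extension operators, to which the classical $\ell^2$ decoupling theorem of Bourgain-Demeter for the cone in $\R^3$ can be applied. The range $\alpha(p)=0$ for $2\leqslant p \leqslant 6$ is exactly the cone decoupling exponent, and the range $\alpha(p)=\tfrac14-\tfrac{3}{2p}$ for $p\geqslant 6$ is obtained by interpolation with the trivial $L^\infty$ estimate and Bernstein.

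First I would fix a small $\delta>0$ and an intermediate scale $K$ with $1\ll K\leqslant \lambda^{1/2-\delta}$, and tile $B_R$ by a family of balls $\{B_K^{(j)}\}_j$ of radius $K$. On each such $B_K^{(j)}$ centered at $\bar{z}_j$, the preceding discussion shows that $\mathscr{T}_\lambda f$ coincides, up to the change of frequency variable $\eta\mapsto \Psi^\lambda(\bar{z}_j,\eta)$ and an amplitude harmless smooth factor, with an oscillatory integral whose phase is $\langle x-\bar{x}_j,\eta\rangle+(t-\bar{t}_j)h_{\bar{z}_j}(\eta)+\varepsilon_\lambda^{\bar{z}_j}(z-\bar{z}_j,\eta)$. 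The derivative bound \eqref{eq:64} guarantees that $\varepsilon_\lambda^{\bar{z}_j}$ behaves as an amplitude of symbol type on $B_K$, so that $\mathscr{T}_\lambda f$ on $B_K^{(j)}$ can be written as $E_{\bar{z}_j} g_j$ modulo a kernel controlled by the symbol derivatives of $\varepsilon_\lambda^{\bar{z}_j}$; the tail error is ${\rm RapDec}(\lambda)\|f\|_{L^2}$ after absorbing the symbol into the amplitude of $E_{\bar z_j}$.

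Second I would apply the Bourgain-Demeter $\ell^2$ decoupling for the cone to each $E_{\bar{z}_j}g_j$ at scale $K$. Since $h_{\bar z}$ is homogeneous of degree one with one nonvanishing principal curvature uniformly in $\bar z$, the surface $\{(\eta,h_{\bar z}(\eta))\}$ is a truncated cone of the required type, and one has
\begin{equation*}
\|E_{\bar{z}_j}g_j\|_{L^p(B_K^{(j)})}\lesssim_\varepsilon K^{\alpha(p)+\varepsilon/2}\Bigl(\sum_{\theta}\|E_{\bar{z}_j}(g_j)_\theta\|_{L^p(w_{B_K^{(j)}})}^2\Bigr)^{1/2},
\end{equation*}
where $\theta$ ranges over a partition of the relevant angular region into caps of aperture $K^{-1/2}$. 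Unravelling the approximation step, this gives a decoupling estimate for $\mathscr{T}_\lambda f$ on $B_K^{(j)}$ into pieces whose frequency supports are angular sectors of aperture $K^{-1/2}$. Summing in $j$, raising to the $p$-th power, and using Minkowski's inequality in $\ell^2$ (which is permissible because $p\geqslant 2$) yields a global decoupling of $\mathscr{T}_\lambda f$ on $B_R$ at angular scale $K^{-1/2}$.

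Third I would close the induction by parabolically rescaling each $K^{-1/2}$-sector piece. Exactly as in Proposition \ref{proa}, on each such sector the change of variables produces a new operator $\mathscr{\tilde T}_{\lambda/K}$ of type $\mathbf{1}$ posed on a ball of radius $R/K$ in the rescaled frame. Applying the induction hypothesis of the theorem at scale $R/K$, one further decouples into sectors of aperture $(R/K)^{-1/2}$; after reversing the rescaling, these correspond to the desired $R^{-1/2}$-sectors $\theta_\nu$ appearing in the $L^{p,R}_{\rm Dec}$-norm. Choosing $K$ to depend slowly on $R$ (for instance $K$ a large constant or $K=R^{\varepsilon/10}$), the accumulated losses from the iteration combine to at most $R^{\varepsilon}$, and the $\lambda^{-N}\|f\|_{L^2}$ error term is preserved throughout (with $N$ adjusted) because only finitely many iterations occur before one reaches the base case $R=O(1)$.

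The main obstacle is the local approximation step: one must justify that on a ball of radius $K\leqslant\lambda^{1/2-\delta}$ the error phase $\varepsilon_\lambda^{\bar z}$ is benign enough that the decoupling for the model operator $E_{\bar z}$ transfers back to $\mathscr{T}_\lambda$ without any loss beyond the tolerated $K^{\varepsilon}$. This is precisely where the quantitative hypotheses $(\mathbf{H}_{\mathbf{A}})$ and $(\mathbf{D}_{\mathbf{A}})$ and the bound \eqref{eq:64} are used, and it dictates the choice of the intermediate scale $K$ as well as the structure of the induction on scales.
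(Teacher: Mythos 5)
Your proposal is correct and follows essentially the same route that the paper invokes: the paper does not itself prove Theorem \ref{theo:de}, but simply states that the argument of Beltran--Hickman--Sogge \cite{BelHicSog18P} carries over \emph{mutatis mutandis}, and your three-step scheme (local approximation of $\mathscr{T}_\lambda$ by extension operators $E_{\bar z}$ on $K$-balls with $K\leqslant\lambda^{1/2-\delta}$ via Lemma \ref{stab}, Bourgain--Demeter cone decoupling applied to $E_{\bar z}$, and induction on scale via parabolic rescaling as in Proposition \ref{proa}) is precisely that mechanism, including the observation that the convexity hypothesis of \cite{BelHicSog18P} is vacuous in $2+1$ dimensions.
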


\begin{remark}
	The specific form of Theorem \ref{theo:de} did not appear in \cite{BelHicSog18P}, however  \cite{BelHicSog18P} genuinely provided a mechanism for transferring  Bourgain-Demeter's $\ell^2-$decoupling theorem to the variable coefficient setting with an additional  convexity assumption on the phase function.  The key point  is that one may  approximate the oscillatory integrals of H\"ormander's type at sufficiently small spatial  scale by $E_{\bar{z}}$.  The convexity condition  is superfluous in this paper since we only consider  the two dimensional case.
\end{remark}
The following stability lemma makes the variable coefficient case and its constant counterpart comparable at sufficiently small scales.
\begin{lemma}[\cite{BelHicSog18P}]\label{stab}
	Let  $0<\delta<1/2$, if $1\leq R\leq \lambda^{\f{1}{2}-\delta}$,  provided $N$ is sufficiently large depending on $\delta, p$, then
	\begin{align}
	\big\|\mathscr{T}_\lambda^\nu  f\big\|_{L^p(w_{\bar{B}_R})}&\lesssim_{N} \big\|E_{\bar z}^\nu f\big\|_{L^p(w_{B_R})}+\lambda^{-\delta N/2}\|f\|_{L^2},\\
	\big\|E_{\bar z}^\nu f\big\|_{L^p(w_{B_R})}&\lesssim_{N} \big\|\mathscr{T}_\lambda^\nu f\big\|_{L^p(w_{\bar{B}_R})}+\lambda^{-\delta N/2}\|f\|_{L^2}.
	\end{align}
	where $\bar z$ is the center of $\bar{B}_R$ and  the operator $E_{\bar z}^\nu$ is defined by
	\begin{align*}
	&E_{\bar z}^{\nu} f(x,t):=\int_{\R^2}e^{i(\langle x,\eta\rangle+th_{\bar z}(\eta))}a_{2,\bar z}^\nu(\eta) f(\eta)\dif\eta,\\
	&a_{2,\bar z}^\nu(\eta)=a^\nu_2(\Psi^{\lambda}(\bar z,\eta))|{\rm det}\partial_\eta \Psi^\lambda(\bar z,\eta)|.
	\end{align*}
\end{lemma}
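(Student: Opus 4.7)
I will compare the two oscillatory integrals on $\bar B_R$ via Taylor expansion of $\phi^\lambda$ around $\bar z$, exploiting that the phase error $\varepsilon_\lambda^{\bar z}$ is quadratic in $z-\bar z$ and hence of size $R^2/\lambda\le\lambda^{-2\delta}$ on $\bar B_R$. The starting point is the representation derived just before the lemma:
\beqq
\mathscr{T}_\lambda^\nu f(z) = \int e^{i[(x-\bar x)\eta+(t-\bar t)h_{\bar z}(\eta)]}\,e^{i\varepsilon_\lambda^{\bar z}(z-\bar z,\eta)}\,\tilde a_{\lambda,\bar z}^\nu(z,\eta)\,f_{\bar z}(\eta)\,\dif\eta,
\eeqq
where $f_{\bar z}$ is the modulation/push-forward of $f$ under $\Psi^\lambda(\bar z,\cdot)$, with $\|f_{\bar z}\|_{L^p}\approx\|f\|_{L^p}$ (the Jacobian being absorbed into $a_{2,\bar z}^\nu$). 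Under the normalization $\mathbf{H}_{\bf A}$, $\Psi^\lambda(\bar z,\cdot)$ is close to the identity, so that $f_{\bar z}$ and $f$ are interchangeable modulo smooth bounded operations. Since $a_\lambda(z,\eta)=a(z/\lambda,\eta)$ varies in $z$ at the coarse scale $\lambda\gg R$, I freeze the amplitude $\tilde a_{\lambda,\bar z}^\nu(z,\eta)$ at $z=\bar z$, introducing a negligible $L^\infty$-error of size $R/\lambda\le\lambda^{-1/2-\delta}$.

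The heart of the argument is the Taylor expansion of $e^{i\varepsilon_\lambda^{\bar z}(z-\bar z,\eta)}$ to order $M$ (to be chosen depending on $\delta,p,N$), using the bound $|\partial_\eta^\beta\varepsilon_\lambda^{\bar z}|\lesssim R^2/\lambda\le\lambda^{-2\delta}$ for $|\beta|\le N$ from \eqref{eq:64}:
\beqq
e^{i\varepsilon_\lambda^{\bar z}}=\sum_{k=0}^{M-1}\frac{(i\varepsilon_\lambda^{\bar z})^k}{k!}+R_M,\qquad |R_M|\lesssim\frac{\lambda^{-2\delta M}}{M!}.
\eeqq
The $k$-th finite term is a polynomial in $z-\bar z$ of degree $\le 2k$ whose coefficients take the form $\lambda^{-k}q_{k,\alpha}(\eta)$ for fixed smooth, compactly supported symbols $q_{k,\alpha}$ (arising from the multilinear expansion of the Hessian integrand in $\varepsilon_\lambda^{\bar z}$). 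Pulling out the $(z-\bar z)^\alpha$ factors, each such term reduces to $(z-\bar z)^\alpha\lambda^{-k}E_{\bar z}^\nu(q_{k,\alpha}f_{\bar z})(z-\bar z)$. Expanding $q_{k,\alpha}$ as a Fourier series on the $R^{-1/2}$-thin plate $\theta_\nu$ converts each Fourier mode into a spatial translate of $E_{\bar z}^\nu f_{\bar z}$ with rapidly decaying coefficients, so (combined with the weight translation estimate $w_{\bar B_R}(z+s)\lesssim(1+|s|/R)^Cw_{\bar B_R}(z)$) the $k$-th contribution is
\beqq
\|k\text{-th term}\|_{L^p(w_{\bar B_R})}\lesssim\frac{R^{2k}}{k!\,\lambda^{k}}\|E_{\bar z}^\nu f\|_{L^p(w_{B_R})}\lesssim\frac{\lambda^{-2\delta k}}{k!}\|E_{\bar z}^\nu f\|_{L^p(w_{B_R})},
\eeqq
and summation in $k$ gives the principal term $\lesssim\|E_{\bar z}^\nu f\|_{L^p(w_{B_R})}$.

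For the remainder $R_M$, the trivial $L^\infty$-bound $|R_M|\lesssim\lambda^{-2\delta M}$ combined with Cauchy--Schwarz on the plate $\theta_\nu$ (of area $R^{-1/2}$) and $|\bar B_R|\sim R^3$ gives $\lesssim R^{3/p-1/4}\lambda^{-2\delta M}\|f\|_{L^2}\le\lambda^{C(p)-2\delta M}\|f\|_{L^2}$. Choosing $M$ sufficiently large (e.g.\ $M\ge N/(4\delta)+C(p)/(2\delta)+1$), this bound is $\le\lambda^{-\delta N/2}\|f\|_{L^2}$, completing the first inequality; the reverse follows by the symmetric argument with $\Psi^\lambda(\bar z,\cdot)$ inverted. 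The main obstacle is the pseudo-multiplier step: controlling $\|E_{\bar z}^\nu(q_{k,\alpha}\,\cdot\,)\|_{L^p\to L^p}$ uniformly in $k,\alpha$ up to a combinatorial factor absorbed by $1/k!$, which relies crucially on the uniform $\eta$-derivative bounds on $\varepsilon_\lambda^{\bar z}$ from \eqref{eq:64} so that (via Leibniz/multinomial) the derivatives of $q_{k,\alpha}$ remain bounded in the appropriate anisotropic norms on the plate.
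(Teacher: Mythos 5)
The paper does not prove this lemma itself; it is imported verbatim from \cite{BelHicSog18P}, so there is no internal proof to compare against. Your strategy---Taylor expanding $e^{i\varepsilon_\lambda^{\bar z}}$, peeling off polynomial-in-$(z-\bar z)$ factors, then Fourier expanding the resulting $\eta$-symbols $q_{k,\alpha}$ so that each mode becomes an $x$-translate of $E_{\bar z}^\nu f_{\bar z}$ absorbed by the weight---is the correct circle of ideas and is the mechanism used in the cited reference. Two points deserve care. First, $\varepsilon_\lambda^{\bar z}(v,\eta)$ is \emph{not} literally a polynomial of degree two in $v$: the argument $(\bar z+sv)/\lambda$ appears inside $\partial_{zz}^2\phi$, so each power $(\varepsilon_\lambda^{\bar z})^k$ has a slowly varying, non-polynomial $v$-dependence. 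As written, the claim that ``the $k$-th finite term is a polynomial in $z-\bar z$ of degree $\le 2k$'' is not exact. One fix is to separate the genuine quadratic part from an $O(R^3/\lambda^2)=O(\lambda^{-1/2-3\delta})$ remainder and handle the latter crudely; a cleaner route, and the one that matches the reference most directly, is to skip the Taylor step entirely and expand $e^{i\varepsilon_\lambda^{\bar z}(v,\eta)}\,\tilde a_{\lambda,\bar z}(\bar z+v,\eta)$ in a double Fourier series in $(v,\eta)$ over a box of side $\sim R\times 1$, using \eqref{eq:64} and $\mathbf{D}_{\bf A}$ for rapidly decaying coefficients: the $v$-modes are unimodular and the $\eta$-modes are $x$-translates of $E_{\bar z}^\nu$, so the weight translation estimate closes the argument in one step. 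Second, you silently upgrade \eqref{eq:64} from $\le 1$ to $\lesssim R^2/\lambda\le\lambda^{-2\delta}$; this refinement is indeed correct (it follows from the explicit $1/\lambda$ and $|v|^2$ factors in the formula for $\varepsilon_\lambda^{\bar z}$ together with $\mathbf{D}_{\bf A}$), but it should be stated and justified rather than attributed to \eqref{eq:64} as written. With those repairs the remainder estimate, the choice of $M$, and the symmetric treatment of the reverse inequality are all sound, and the overall argument is essentially that of \cite{BelHicSog18P}.
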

 The following  lemma is concerned with  orthogonality property which enables us to convert the scale of the plate in the $L^2$-setting. For completeness, we will give the proof in the appendix.

\begin{lemma}\label{le6}
  Let the operator $E_{\bar z}$ be defined as above, then
  \beq
\Big(\sum_{\nu}\big\|E_{\bar z}^{\nu} f\big\|_{L^2(w_{B_{\sqrt{R}}})}^2\Big)^{\f{1}{2}}\leq
 \big\|E_{\bar z} f\big\|_{L^2(w_{B_{\sqrt{R}}})}+R^{-\varepsilon N}\|f\|_{L^2}.
\eeq
where $E_{\bar z}f:=\sum\limits_{\nu}E_{\bar z}^{\nu} f$.
\end{lemma}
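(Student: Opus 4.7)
The plan is to apply Plancherel in the $z$-variable, writing the weight as the squared modulus of a Schwartz function whose Fourier transform has small support at the dual scale $R^{-1/2}$, and then exploit the essentially disjoint Fourier supports of the pieces $E_{\bar z}^\nu f\cdot\psi$.

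I would first realize $w_{B_{\sqrt R}} = |\psi|^2$ for a Schwartz function $\psi$ essentially concentrated on $B_{\sqrt R}$ with $\hat\psi$ supported in $B(0,cR^{-1/2})$ for a small constant $c>0$. Plancherel then gives $\|E_{\bar z}^\nu f\|^2_{L^2(w_{B_{\sqrt R}})} = \|E_{\bar z}^\nu f\cdot\psi\|^2_{L^2(\R^3)}$, and direct computation yields
\[
\widehat{E_{\bar z}^\nu f\cdot\psi}(\xi,\tau) = \int\hat\psi(\xi-\eta,\tau-h_{\bar z}(\eta))a_{2,\bar z}^\nu(\eta)f(\eta)\,\dif\eta,
\]
which is supported in the $cR^{-1/2}$-neighborhood $N_\nu$ of the cone piece $S_\nu = \{(\eta,h_{\bar z}(\eta)) : \eta\in\mathrm{supp}(\chi_\nu a_{2,\bar z})\}$. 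Expanding both sides of the desired inequality as bilinear forms in $(\eta,\eta')$ and subtracting leads to the identity
\[
\|E_{\bar z} f\|^2_{L^2(w_{B_{\sqrt R}})} - \sum_\nu\|E_{\bar z}^\nu f\|^2_{L^2(w_{B_{\sqrt R}})} = \iint f(\eta)\bar f(\eta')a_{2,\bar z}(\eta)\bar a_{2,\bar z}(\eta')\Bigl[\sum_{\nu\ne\nu'}\chi_\nu(\eta)\chi_{\nu'}(\eta')\Bigr]\hat w(\eta'-\eta,h_{\bar z}(\eta')-h_{\bar z}(\eta))\,\dif\eta\,\dif\eta'.
\]

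Next I would bound the cross-term. The kernel $\hat w$ is essentially concentrated in $B(0,cR^{-1/2})$, forcing $|\eta-\eta'|\lesssim cR^{-1/2}$. The cutoff factor $\sum_{\nu\ne\nu'}\chi_\nu(\eta)\chi_{\nu'}(\eta')$ restricts to $\eta,\eta'$ lying in different angular sectors of aperture $R^{-1/2}$. Intersecting these two constraints localizes the effective domain of integration to a narrow slab around the shared boundary of adjacent sectors. Choosing $c$ sufficiently small relative to the buffer between the $\chi_\nu$'s (by refining, if necessary, the standard smooth partition of unity so that pairwise overlaps sit in a strip much finer than $cR^{-1/2}$), Schur's test applied to the resulting kernel bounds the cross-term bilinear form by $\mathrm{RapDec}(R)\|f\|^2_{L^2}$. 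Taking square roots then yields the stated inequality, with the error absorbed into $R^{-\varepsilon N}\|f\|_{L^2}$ upon choosing $N$ large.

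The main obstacle is the control of this off-diagonal bilinear form without being able to sign it: $\hat w$ is positive-definite but not positive, and $f(\eta)\bar f(\eta')$ has indefinite phase, so the cross terms could conspire to be genuinely negative. The delicate point is that the standard smooth partition of unity in the paper has overlap width $\sim R^{-1/2}$, comparable to the sector aperture and thus to the effective decay scale of $\hat w$. Resolving this requires exploiting the freedom in the definition of $w_B$ --- only polynomial decay away from $B$ is required --- to select $\psi$ with $\hat\psi$ supported on a scale genuinely finer than the overlap of the $\chi_\nu$'s, possibly combined with a Cotlar--Stein style decomposition separating ``near'' and ``far'' sector pairs. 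Carrying out the bookkeeping so that all error contributions decay at rate $R^{-\varepsilon N}$ rather than something slower is where the technical work concentrates.
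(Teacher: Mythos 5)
Your proposal runs Plancherel in the full space-time variable $z=(x,t)$ and tries to control the off-diagonal terms $\langle E_{\bar z}^{\nu}f, E_{\bar z}^{\nu'}f\rangle_{w}$ by the Fourier decay of the weight. There is a genuine gap: for \emph{adjacent} sectors $\nu,\nu'$ the constraint $|\eta-\eta'|\lesssim R^{-1/2}$ enforced by $\widehat{w}$ is automatically satisfied (the sectors themselves have aperture $R^{-1/2}$ and are packed contiguously), so the cross term is comparable in size to the diagonal, not a rapidly decaying error. Shrinking $c$ or ``refining the partition'' cannot repair this, since what controls the overlap is the separation between distinct sectors, which is fixed at scale $R^{-1/2}$ by the definition of the square function; additionally, shrinking the transition width of $\chi_{\nu}$ would inflate $|\partial^{\alpha}\chi_{\nu}|$ and degrade the rapid-decay tail estimates used elsewhere. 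Bounded overlap of the Fourier supports $N_{\nu}$ would anyway only give $\bigl\|\sum_{\nu}g_{\nu}\bigr\|_2^2\lesssim\sum_{\nu}\|g_{\nu}\|_2^2$; it does not yield the reverse (Bessel-type) inequality you need, which can fail under mere finite overlap.

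The paper avoids this entirely by freezing $t=t_0$: at fixed time, $E_{\bar z}^{\nu}f(\cdot,t_0)$ is literally the spatial frequency projection $\chi_{\bar z,\nu}(D_x)$ applied to $E_{\bar z}f(\cdot,t_0)$, so Plancherel in $x$ plus the pointwise bound $\sum_{\nu}\chi_{\bar z,\nu}^2\leq\bigl(\sum_{\nu}\chi_{\bar z,\nu}\bigr)^2=1$ gives the Bessel inequality with constant one, with no cross terms to estimate. The weight is not a sharp cutoff, so the paper splits $E_{\bar z}f$ into a near part $\{|x|\leq R^{1/2+\varepsilon/2}\}$ and a far part; the far part's contribution on $\{|\tilde x|\leq R^{1/2+\varepsilon/4}\}$ is $R^{-\varepsilon N}\|f\|_{L^2}$ by the rapid decay of $\widehat{\chi}_{\bar z,\nu}$, and the near part is handled by unweighted Plancherel in $x$ after the near/far split transfers the weight to a sharp indicator. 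You would need to reorganize your argument around this frozen-time observation for the proof to close.
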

\
\noindent The proof of \eqref{eq:152} amounts to showing
\beq \label{eq:187}
{\mathbf{S} }_{\bf 1}^{\f{1}{16}, \varepsilon}(\lambda, R)\lesssim_\varepsilon 1,
\eeq
which is deduced from the following bootstrapping lemma.

\begin{lemma}\label{le5}
	If there exists an $\alpha\geq0$ such that $\mathbf{S}_{\bf 1}^{\alpha,\varepsilon}(\lambda,R)\lesssim 1$ holds for all $1\leqslant R\leqslant \lambda^{1-\varepsilon/2}$,  then  $\mathbf{S}_{\bf 1}^{\beta, \varepsilon}(\lambda,R)\lesssim 1$, with $\beta=1/24+\alpha/3+3\varepsilon$.
\end{lemma}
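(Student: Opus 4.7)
The plan is to prove the lemma by induction on $R$, with the base case (small $R$) covered by the trivial estimate \eqref{eq:175}. For large $R$, I would combine the $\ell^2$-decoupling inequality (Theorem \ref{theo:de}) at an intermediate spatial scale with parabolic rescaling (Proposition \ref{proa}) to invoke the hypothesis, finishing with an $L^2$-orthogonality-based step to reduce the resulting decoupling norm to the full square function.

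More concretely, I would partition $B_R$ into pairwise disjoint balls $\{\bar B\}$ of radius $R^{2/3}$ (the natural spatial scale for plates of aperture $R^{-1/3}$), and decompose $\mathscr{T}_\lambda f = \sum_\mu \mathscr{T}_\lambda^\mu f$ into sectors $\theta_\mu$ of aperture $R^{-1/3}$ (so $\#\{\mu\}\sim R^{1/3}$). Since $\alpha(4)=0$ in Theorem \ref{theo:de}, applying decoupling at scale $R^{2/3}$ on each $\bar B$ would lose only an $R^\varepsilon$ factor; raising this to the fourth power, summing over $\bar B\subset B_R$, and commuting $\ell^2_\mu$ past $\ell^4_{\bar B}$ via Minkowski will yield
\beno
\|\mathscr{T}_\lambda f\|_{L^4(B_R)}^2 \lesssim R^{2\varepsilon}\sum_\mu\|\mathscr{T}_\lambda^\mu f\|_{L^4(B_R)}^2+\text{err}.
\eeno

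For each $\mu$, the frequency support of $\mathscr{T}_\lambda^\mu f$ is on an $R^{-1/3}$-plate, so parabolic rescaling (Proposition \ref{proa}) with $\rho = R^{1/3}$ transfers the problem to the rescaled scale $R/\rho^2 = R^{1/3}$ and rescaled frequency $\lambda/\rho^2=\lambda R^{-2/3}$, the admissibility condition being guaranteed by $R\leq\lambda^{1-\varepsilon/2}$. Applying the induction hypothesis $\mathbf{S}_{\bf 1}^{\alpha,\varepsilon}(\lambda R^{-2/3},R^{1/3})\lesssim 1$ will then produce the key factor $R^{(\alpha+\varepsilon)/3}$:
\beno
\|\mathscr{T}_\lambda^\mu f\|_{L^4(B_R)} \lesssim R^{\alpha/3+\varepsilon/3}\|\mathscr{T}_\lambda^\mu f\|_{L^{4,R}_{\rm Sq}(B_R)}+\text{err}.
\eeno
This is the source of the $\alpha/3$ contribution to the output exponent $\beta$.

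The hardest step will be the final one. Writing $h_\mu:=(\sum_{\nu\subset\mu}|\mathscr{T}_\lambda^\nu f|^2)^{1/2}$ and $H:=(\sum_\nu|\mathscr{T}_\lambda^\nu f|^2)^{1/2}$, I must bound $\sum_\mu\|h_\mu\|_{L^4(w_{B_R})}^2$ by the sharp $R^{1/12+O(\varepsilon)}\|H\|_{L^4(w_{B_R})}^2$. A naive Cauchy--Schwarz using $\#\{\mu\}\sim R^{1/3}$ combined with the pointwise inequality $\sum_\mu h_\mu^4\leq H^4$ gives only $R^{1/6}\|H\|_{L^4}^2$, which would lead to the suboptimal output exponent $\alpha/3+1/12$. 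To recover the sharp factor $R^{1/12}$, I would subdivide $B_R$ further into balls $\tilde B$ of radius $\sqrt R$ and invoke the $L^2$-orthogonality of Lemma \ref{le6} at this scale to obtain $\sum_\mu\|h_\mu\|_{L^2(\tilde B)}^2\lesssim\|H\|_{L^2(\tilde B)}^2$; interpolating this against the pointwise domination $h_\mu\leq H$ via a careful H\"older argument on each $\tilde B$ before summing over $\tilde B\subset B_R$ should yield the desired bound. Collecting the three main estimates and extracting a square root then gives $\mathbf{S}_{\bf 1}^{\beta,\varepsilon}(\lambda,R)\lesssim 1$ with $\beta = 1/24 + \alpha/3 + 3\varepsilon$.
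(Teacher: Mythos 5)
Your proposal has a genuine gap in the final step, and it is not a small one: the claimed inequality
\begin{equation*}
\sum_{\mu}\bigl\|h_{\mu}\bigr\|_{L^4(w_{B_R})}^2\ \lesssim\ R^{1/12+O(\varepsilon)}\,\bigl\|H\bigr\|_{L^4(w_{B_R})}^2
\end{equation*}
cannot be extracted from the two inputs you cite, namely the trivial $L^2$ identity $\sum_{\mu}\|h_{\mu}\|_{L^2}^2=\|H\|_{L^2}^2$ and the pointwise domination $h_{\mu}\leq H$. Running the interpolation you sketch on a ball $\tilde B$ of radius $\sqrt R$, on which $H$ is locally constant, gives only
\begin{equation*}
\sum_{\mu}\|h_{\mu}\|_{L^4(\tilde B)}^2
\leq \|H\|_{L^\infty(\tilde B)}\sum_{\mu}\|h_{\mu}\|_{L^2(\tilde B)}
\leq (\#\mu)^{1/2}\,\|H\|_{L^\infty(\tilde B)}\|H\|_{L^2(\tilde B)}
\approx (\#\mu)^{1/2}\|H\|_{L^4(\tilde B)}^2,
\end{equation*}
and summing over $\tilde B\subset B_R$ reproduces the ``naive'' factor $(\#\mu)^{1/2}\sim R^{1/6}$, not $R^{1/12}$. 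Moreover this is sharp for the data at hand: if the $h_{\mu}$ have essentially disjoint supports (the wave-packet picture permits this), say $h_{\mu}=\mathbf{1}_{A_{\mu}}$ with $N=\#\mu$ disjoint sets of equal measure, then $\sum_\mu\|h_\mu\|_{L^4}^2=N$ while $\|H\|_{L^4}^2=N^{1/2}$, so the ratio is exactly $N^{1/2}$. Plugging the honest factor $R^{1/6}$ into your bootstrap gives $\beta=\alpha/3+1/12$, whose fixed point is $1/8$ rather than the required $1/16$, so the argument does not close.

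The source of the extra $R^{-1/24}$ in the paper's proof is a genuinely \emph{multilinear} gain that your purely linear scheme has no access to. The paper first reduces to the trilinear constant $\mathbf{MS}_{\bf 1}^{\sigma,\varepsilon}$ via Proposition \ref{prob}, then establishes the trilinear analogue of the argument: the Bennett--Carbery--Tao estimate (Theorem \ref{theo2}) yields an $L^3$ inequality with a gain $R^{-1/4}$ in front of the $\ell^2$-decoupling norm at aperture $R^{-1/4}$ (that is \eqref{eq:122} localized to $Q_k$ of sidelength $R^{1/2}$, i.e.\ \eqref{eq:132}), while the $L^6$ inequality \eqref{eq:131} from decoupling has no gain. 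Interpolating these at $L^4$ produces the factor $R^{-1/8}$. When the resulting $\ell^2_{\kappa}L^3_z$ norm is converted back to the square function at scale $R^{-1/2}$, the $L^2$ piece costs $R^{3/8}$ by H\"older on $Q_k$ (\eqref{eq:141}) and the $L^4$ piece costs $R^{1/16+\alpha/2}$ by $\ell^2\to\ell^4$ plus parabolic rescaling at $\rho=R^{1/4}$ (\eqref{eq:142}). The trilinear gain $R^{-1/8}$ exactly cancels the $\tfrac13\cdot R^{3/8}$ contribution, leaving $\tfrac23(1/16+\alpha/2)=1/24+\alpha/3$. Nothing in your argument plays the role of that $R^{-1/8}$; the $\ell^2$-decoupling step at $L^4$ (where $\alpha(4)=0$) contributes no gain on its own. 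If you want to salvage a linear route you would need a genuine $L^4$-orthogonality statement among the $h_\mu$ (not a consequence of $L^2$ orthogonality and pointwise domination), and I do not believe such a statement is available; the intended and correct route passes through the trilinear reduction.
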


\begin{proof}
	By Proposition \ref{prob}, it suffices to show $\mathbf{ MS}_{\bf 1}^{1/24+\alpha/3, 3\varepsilon}(\lambda, R)\lesssim_\varepsilon 1$ for all $1\leq R\leq \lambda^{1-\varepsilon/2}$.

Modifying the approach in the proof of \eqref{eq:110}, one may obtain
	\beq \label{eq:122}
	\Big\|\prod_{j=1}^3 |\mathscr{T}_{\lambda}^jf|^{\f{1}{3}}\Big\|_{L^{3}(B_{\sqrt{R}})}\leq C_\varepsilon R^{-\f14+\varepsilon} \prod_{j=1}^3
\big\|\mathscr{T}_{\lambda}^j f\big\|_{L^{2,R}_{\rm Dec}(B_{\sqrt{R}})}^{\frac13}+\lambda^{-N}\|f\|_{L^2},
	\eeq
	Indeed, it just needs to replace $L^3$ norm with  $L^2$ norm in \eqref{mkdcds}.

	By choosing $N$ sufficiently large to compensate for the error terms appearing in \eqref{eq:100} so that we may neglect the influence of the  error terms.
	
	Let $\{Q_k\}$ be a class of finitely-overlapping cubes of sidelength  comparative to $R^{1/2}$ that together form a cover of $B(0,R)$.
		
	Using the Theorem  \ref{theo:de}, we have
	\begin{align}\label{eq:131}
	\Big\|\prod_{j=1}^3 |\mathscr{T}_{\lambda}^jf|^{\f{1}{3}}\Big\|_{L^{6}(Q_k)}\lesssim_\varepsilon R^{\f{1}{2}\varepsilon} \prod_{j=1}^3 \big\|\mathscr{T}_\lambda^j f\big\|_{L^{6,R^{\f12}}_{{\rm Dec}}(Q_k)}^{\f{1}{3}},
	\end{align}
	Similarly, by \eqref{eq:122}, Lemma \ref{stab} and Lemma \ref{le6},  we obtain
	\begin{align}
	\Big\|\prod_{j=1}^3 |\mathscr{T}_{\lambda}^jf|^{\f{1}{3}}\Big\|_{L^{3}(Q_k)}&\leq C_\varepsilon R^{-\f{1}{4}+\varepsilon} \prod_{j=1}^3 \big\|\mathscr{T}_{\lambda}^j f\big\|_{L^{2,R}_{{\rm Dec}}(Q_k)}^{\f{1}{3}}  \nonumber\\
&\leq C_\varepsilon R^{-\f{1}{4}+\varepsilon} \prod_{j=1}^3 \big\|E_{z_k}^j f\big\|_{L^{2,R}_{{\rm Dec}}(Q_0)}^{\f{1}{3}}  \nonumber\\ &\leq C_\varepsilon R^{-\f{1}{4}+\varepsilon} \prod_{j=1}^3 \big\|E_{z_k}^j f\big\|_{L^{2,R^{\f12}}_{{\rm Dec}}(Q_0)}^{\f{1}{3}}\nonumber\\
&\leq C_\varepsilon R^{-\f{1}{4}+\varepsilon} \prod_{j=1}^3 \big\|\mathscr{T}_{\lambda}^j f\big\|_{L^{2,R^{\f12}}_{{\rm Dec}}(Q_k)}^{\f{1}{3}}.\label{eq:132}
	\end{align}
	Note \eqref{eq:131} and \eqref{eq:132}, using vector-valued version of interpolation argument(see \cite{Le18P}), we have
	\begin{align}
	&\quad \Big(\sum_{k}\Big\|\prod_{j=1}^3 |\mathscr{T}_{\lambda}^jf|^{\f{1}{3}}\Big\|_{L^{4}(Q_k)}^4\Big)^{\f{1}{4}}\notag\\
	&\lesssim_\varepsilon R^{-\f{1}{8}+\f {7}{ 4}\varepsilon} \Big(\sum_k \Big(\prod_{j=1}^3 \|\mathscr{T}_{\lambda}^j f\|_{L^{3,R^{\f{1}{2}}}_{\rm Dec} (Q_k)}\Big)^{\f{1}{3}\times 4}\Big)^{\f{1}{4}}\notag\\
	&\lesssim_\varepsilon R^{-\f{1}{8}+\f {7}{ 4}\varepsilon} \Big(\sum_k \Big(\prod_{j=1}^3 \|\mathscr{T}_{\lambda}^j f\|_{L^{2,R^{\f{1}{2}}}_{\rm Dec} (Q_k)}^{\f{1}{3}\times \frac{1}{3}\times 4}\|\mathscr{T}_{\lambda}^j f\|_{L^{4,R^{\frac{1}{2}}}_{\rm Dec} (Q_k)}^{\frac{2}{3}\times \frac{1}{3}\times 4}\Big)^{\f{1}{4}}\notag\\
	&\lesssim_\varepsilon R^{-\f{1}{8}+\f {7}{ 4}\varepsilon}\prod_{j=1}^3 \Big(\sum_k \|\mathscr{T}_{\lambda}^j f\|_{L^{2,R^{\f12}}_{\rm Dec}(Q_k)}^4\Big)^{\f{1}{3}\times \f{1}{3}\times \f{1}{4}}\prod_{j=1}^3 \Big(\sum_k \|\mathscr{T}_{\lambda}^j f\|_{L^{4,R^{\f{1}{2}}}_{\rm Dec}(Q_k)}^4\Big)^{\f{2}{3}\times \f{1}{3}\times \f{1}{4}}. \label{eq:143}
	\end{align}
	Owing to the orthogonality property and Lemma \ref{stab}, we have
	\begin{align}
	\nonumber
  \Big(\sum_k \|\mathscr{T}_{\lambda}^j f\|_{L^{2,R^{\f{1}{2}}}_{\rm Dec} (Q_k)}^{ 4}\Big)^{\f{1}{4}}\leq C &\Big(\sum_k \|E_{z_k}^j f\|_{L^{2,R^{\f{1}{2}}}_{\rm Dec} (Q_0)}^{ 4}\Big)^{\f{1}{4}}\\
	&\leq C \Big(\sum_k \Big(\sum_{\nu_j}\|E_{z_k}^{\nu_j}f\|_{L^2(w_{Q_0})}^2\Big)^{\f{1}{2}\times 4}\Big)^{\f{1}{4}}\\
&\leq C \Big(\sum_k(\sum_{\nu_j} \|\mathscr{T}_{\lambda}^{\nu_j}f\|_{L^2(w_{Q_k})}^2\Big)^{\f{1}{2}\times 4}\Big)^{\frac{1}{4}}\nonumber\\
	&\leq C R^{\f{3}{8}}\|\mathscr{T}_\lambda^j f\|_{L^{4,R}_{\rm Sq}(B_R)}.
	\label{eq:141}
	\end{align}
	
	Now we turn to estimate the remaining term in \eqref{eq:143}.   By H\"older's inequality, Proposition \ref{proa}, we have
	\begin{align}
		\Big(\sum\limits_k \|\mathscr{T}_{\lambda}^j f\|_{L^{4,R^{\f12}}_{\rm Dec}(Q_k)}^4\Big)^{\frac14}\leq& R^{\frac{1}{16}}\Big(\sum_k \sum_{\kappa_j }\|\mathscr{T}_{\lambda}^{\kappa_j}f\|_{L^4(w_{Q_k})}^4\Big)^{\frac14}\nonumber\\
	\lesssim & R^{\f{1}{16}} {\bf S}_{\bf 1}^{\alpha,\varepsilon}\Big(\frac{\lambda}{R^{\frac12}}, R^{\frac12}\Big)
R^{\frac{\alpha}{2}+\f{\varepsilon}{2}} \Big(\sum_k \sum_{\nu_j} \|\mathscr{T}_\lambda^{j,\nu_j}f\|_{L^4(w_{Q_k})}^4\Big)^{\frac14}\nonumber\\
	\lesssim & R^{\frac{1}{16}+\frac{\alpha}2+\f \varepsilon 2}\|\mathscr{T}_\lambda f\|_{L^{4,R}_{\rm Sq}(B_R)}
	\label{eq:142}
	\end{align}

	Inserting \eqref{eq:141},\eqref{eq:142} into \eqref{eq:143}, discarding the rapid decay term, finally we obtain
	\beq
	\Big\|\prod_{j=1}^3 |\mathscr{T}_\lambda^j f|^{\f{1}{3}}\Big\|_{L^4(B_R)}\leq  C_\varepsilon R^{\frac{1}{24}+\frac{\alpha}{3}+3\varepsilon}\prod_{j=1}^3\|\mathscr{T}_{\lambda}^j f\|_{L^{4,R}_{\rm Sq}(B_R)}.
	\eeq
	This completes the proof of  Lemma \ref{le5}.
	\end{proof}
\section{Appendix}
In this section,   we will prove Lemma \ref{le6}.

{\bf Proof of Lemma \ref{le6}}

  Due to the fast decay of the weight $w_{\sqrt{R}}$ away from $|z|\geq R^{1/2+\varepsilon/4}$, it suffices to consider
\beq \label{eq:124}
\Big(\sum_{\nu}\big\|\mathbf{1}_{\bigl\{|x|\leq R^{1/2+\varepsilon/4}\bigr\}}(E_{\bar z}^{\nu} f)\big\|_{L^{2}(w_{B_{\sqrt{R}}})}^{\f{1}{2}}\Big)^{\f{1}{2}}.
\eeq
  Freeze  time $t_0$ and note that
\beq \label{eqc}
E_{\bar z}^{\nu}f(x, t_0)=\int  e^{i\langle x,\eta\rangle }\chi_{\bar z, \nu}(\eta)(E_{\bar z} f)^{\wedge}(\eta, t_0) \dif \eta
\eeq
where
\begin{align*}
E_{\bar z}f(x,t_0)=\int_{\R^2}e^{i(\langle x,\eta\rangle+t_0h_{\bar z}(\eta))}a_{2,\bar z} (\eta) f(\eta)d\eta, \;
\chi_{\bar z, \nu}(\eta)=\chi_{\nu}(\Psi^\lambda(\bar z, \eta)).
\end{align*}
We further decompose
\beq \label{eq:123a}
E_{\bar z}f (\cdot,t_0)=\mathbf{1}_{\bigl\{|x|\leq R^{\f{1}{2}+\f{\varepsilon}{2}}\bigr\}}(\cdot)E_{\bar z}f(\cdot,t_0)+\mathbf{1}_{\bigl\{|x|> R^{\f{1}{2}+\f{\varepsilon}{2}}\bigr\}}(\cdot)E_{\bar z}f(\cdot,t_0).
\eeq
It remains to estimate
\beq \label{eqd}
\int  e^{i\langle x,\eta\rangle }\chi_{\bar z,\nu}(\eta)\Big(\mathbf{1}_{\bigl\{|x|\leq R^{\f{1}{2}+\f{\varepsilon}{2}}\bigr\}}(\cdot)E_{z_k}f(\cdot,t_0)\Big)^{\wedge}(\eta) \dif \eta.
\eeq
In fact  for $|\tilde x| \leq R^{\f{1}{2}+\f{\varepsilon}{4}} $, the contribution of the second term in \eqref{eq:123a} to \eqref{eqc} equals
$$\int \widehat{\chi}_{\bar z, \nu}(\tilde x-y)\mathbf{1}_{\bigl\{|x|>R^{\f{1}{2}+\f{\varepsilon}{2}}\bigr\}}(y)E_{\bar z}f(y,t_0) \dif y \leq R^{-\varepsilon N}\|f\|_{L^{2}}.
$$
Now unfreezing  $t_0$, by Plancherel's theorem, we have
\begin{align}\label{eq:125}
&\Big(\sum_{\nu}\Big\|\int  e^{i\langle x,\eta\rangle } \chi_{\bar z,\nu}(\eta)\Big(\mathbf{1}_{\bigl\{|x|\leq R^{\f{1}{2}+\f{\varepsilon}{2}}\bigr\}}(\cdot)E_{\bar z}f(\cdot, t)\Big)^{\wedge}(\eta) \dif \eta\Big\|^2_{L^2(w_{B_{\sqrt{R}}})}\Big)^{\frac12}\nonumber\\
&\lesssim \|E_{\bar z}f\|_{L^2( w_{B_{\sqrt{R}}})}.
\end{align}
This completes the proof of Lemma \ref{le6}.

\subsection*{Acknowledgements} The authors were supported by NSFC Grants 11831004.

The authors would like to thank David Beltran  and  Christopher Sogge for their
helpful discussion and suggestions. The authors are also deeply grateful to the anonymous referees for their
 invaluable comments which helped improve the paper greatly.


\end{document}